\newcommand*{\rom}[1]{\expandafter\@slowromancap\romannumeral #1@}
\newcounter{relctr} %
\everydisplay\expandafter{\the\everydisplay\setcounter{relctr}{0}} %
\newtheorem{theorem}{Theorem}
\newtheorem{lemma}{Lemma}
\newtheorem{proposition}{Proposition}
\newtheorem{definition}{Definition}
\newcommand\E{\mathbb{E}}
\newcommand{\betahh}{\hat{\hat{\beta}}}
\newcommand{\betaM}{\hat{\beta}^{M}}
\newcommand{\betaL}{\hat{\beta}^{L}}
\newcommand{\calN}{\mathcal{N}}
\newcommand{\R}{\mathbb{R}}
\newcounter{cnstcnt}
\newcounter{deltacnt}
\newcommand\simiid{\overset{i.i.d}{\sim}}
\newcommand\supp{\operatorname{supp}}
\DeclareMathOperator*{\argmin}{arg\,min}
\DeclareMathOperator*{\Exp}{Exp}
\def\square{\ifmmode\sqr\else{$\sqr$}\fi}
\def\sqr{\vcenter{
         \hrule height.1mm
         \hbox{\vrule width.1mm height2.2mm\kern2.18mm
\vrule width.1mm}
         \hrule height.1mm}}
\begin{document}
\title{A note on the minimax risk of sparse linear regression}

\author[1]{Yilin Guo}
\author[1]{Shubhangi Ghosh}
\author[3]{Haolei Weng}
\author[1]{Arian Maleki}
\affil[1]{Columbia University}
\affil[2]{Michigan State University}

\date{\vspace{-5ex}}
\maketitle

\begin{abstract}Sparse linear regression is one of the classical and extensively studied problems in high-dimensional statistics and compressed sensing. Despite the substantial body of literature dedicated to this problem, the precise determination of its minimax risk remains elusive. This paper aims to fill this gap by deriving asymptotically constant-sharp characterization for the minimax risk of sparse linear regression. More specifically, the paper focuses on scenarios where the sparsity level, denoted as $k$, satisfies the condition $(k\log (p/k))/n \rightarrow 0$, with $p$ and $n$ representing the number of features and observations respectively. We establish that the minimax risk under isotropic Gaussian random design is asymptotically equal to $2 \sigma^2 k/n \log (p/k)$, where $\sigma$ denotes the standard deviation of the noise. In addition to this result, we will summarize the existing results in the literature, and mention some of the fundamental problems that have still remained open. 
\end{abstract}

%%%%%%%%%%%%%%%%%%%%%%%%%%%%%%%%%%%%%%%%%%%%%%
%% Please use \tableofcontents for articles %%
%% with 50 pages and more                   %%
%%%%%%%%%%%%%%%%%%%%%%%%%%%%%%%%%%%%%%%%%%%%%%
%\tableofcontents

\section{Sparse linear regression and minimaxity}

Consider the linear regression model
\begin{equation}\label{model::gaussian-model}
    y_{i} = x_{i}^{T}\beta + z_{i}, \qquad i=1, \ldots, n,
\end{equation}
in which $y_{i}\in\R $ denotes the response, $x_{i}\in \R^{p}$ represents the feature or covariate vector, $\beta\in \R^{p}$ is the unknown signal vector to be estimated, and finally $z_{1}, \ldots, z_{n} \overset{i.i.d.}{\sim} \mathcal{N}(0,\sigma^2)$ are normal errors. We are interested in studying this problem for a broad range of $p$, considering $p$ comparable with $n$, or even much larger than $n$. To ease one of the major concerns that linear regression procedures remain inconsistent unless $p/n\rightarrow 0$, following the rich literature of sparse linear regression \cite{hastie2009elements, buhlmann2011statistics, hastie2015statistical, wainwright2019high, fan2020statistical}, we consider the sparsity structure of the signal in this paper.  Specifically, we assume that the true regression coefficients are $k$-sparse:
\begin{equation} \label{param::sparse}
  \beta \in  \Theta_k:= \{ \beta \in \R^{p}: \|\beta\|_{0} \leq k \},
\end{equation}
where $\|\beta\|_{0}$ denotes the number of non-zero components of $\beta$. In evaluating the performance of estimators, the minimax framework has been one of the most popular approaches, aiming to obtain an optimal estimator which has the best worst-case performance among all estimators. In order to define the minimax risk, the first step is to consider a model for the design matrix $X\in \mathbb{R}^{n\times p}$. Two models have been considered in the literature for matrix $X:$

\begin{itemize}
\item Fixed design: In this model, matrix $X$ is regarded as a fixed entity, and no probabilistic assumptions are imposed upon it. Under this framework, the minimax risk is defined as
\begin{equation}\label{eq:minimxFixed}
R_F(X, \Theta_k, \sigma) := \inf_{\hat{\beta}} \sup_{ \beta \in \Theta_k} \E_{\beta} \|\hat{\beta} - \beta\|_2^{2},
\end{equation}
where $\|\cdot\|_2$ is the Euclidean norm. Note that in the above expression the expectation is with respect to the noise vector $z=(z_1,\ldots, z_n)$ only. 

\item Random design: In this model, $X$ is presumed to be generated via a known probabilistic mechanism. One common random design model posits that the rows of matrix $X$ are independent and identically distributed from a Gaussian distribution. In this case, the minimax risk is defined as 
\begin{equation}\label{eq::sparse-minimax}
    R_R(\Theta_k, \sigma) := \inf_{\hat{\beta}} \sup_{ \beta \in \Theta_k} \E_{\beta} \|\hat{\beta} - \beta\|_2^{2},
\end{equation}
where the expectation is with respect to both $X$ and $z$. 
\end{itemize}

Many researchers have considered characterizing the above two minimax risks for the sparse linear regression problem. However, obtaining the exact minimax risk is mathematically challenging and has remained open. Hence, researchers have explored approaches that aim to approximate the minimax risk. Below we summarize the existing approaches in the literature. On a related note, the minimax risk under other types of loss functions such as $\ell_q$-norm loss ($q\geq 1$) \cite{donoho1994minimax, ye2010rate, bellec2018slope}, prediction loss \cite{raskutti2011minimax, verzelen2012minimax, dalalyan2017prediction} and Hamming loss \cite{ji2012ups, butucea2018variable, ndaoud2020optimal}, has been also studied in the literature. The current paper is focused on the minimax risk with the squared loss as defined in \eqref{eq:minimxFixed} and \eqref{eq::sparse-minimax}.

\section{Approximation of minimax risk}
In this section, we would like to discuss three major approaches that researchers have explored for approximating the minimax risk.

\subsection{Rate-optimal results under fixed design}\label{ssec:fixeddesignrate}
 Given the complexity involved in precisely calculating the minimax risk, numerous studies have tried to obtain the relationship between $R_F(X, \Theta_k, \sigma)$ and parameters such as $k$, $n$, and $p$. In this approach, the aim is often to obtain a function of $k$, $n$, and $p$, call it $f_F(n,p,k)$, for which there exist two constants $c$ and $C$ such that 
\begin{equation}\label{eq:ro_minimax}
c f_F(n,p,k) \sigma^2 \leq R_F(X, \Theta_k, \sigma) \leq C f_F(n,p,k) \sigma^2. 
\end{equation}
As evident, the constant in the lower and upper bounds may be different. Under this setting, once $f_F(n,p,k)$ is characterized, an estimator $\hat{\beta}$ is called minimax rate-optimal if 
\[
\sup_{ \beta \in \Theta_k}\mathbb{E}_{\beta} \|\hat{\beta}-\beta\|_2^2 \leq \tilde{C}f_F(n,p,k)\sigma^2.
\]
Again, $\tilde{C}$ can be different from $C$ that appeared in the upper bound of the minimax risk. One important line of research on sparse linear regression, has been devoted to characterizing $f_F(n,p,k)$ and designing minimax rate-optimal estimators.  

The minimax risk $R_F(X, \Theta_k, \sigma)$ generally depends on the design matrix $X$. For instance, if a few columns of $X$ are linearly dependent, estimating the true $\beta$ becomes impossible and $R_F(X, \Theta_k, \sigma)$ equals infinite. Hence, certain aspects of $X$ need to appear in the upper bound (and lower bound). Some important conditions on $X$ have been proposed in the literature, including restricted isometry property \cite{candes2005decoding},  compatibility condition \cite{van2009conditions} and restricted eigenvalue (RE) condition \cite{bickel2009simultaneous}. We adopt a slightly stronger version of the RE condition from \cite{bellec2018slope} for later discussion.

\begin{definition} The matrix $X$ is said to satisfy the $SRE(k,c_0)$ condition for a given $c_0>0$ if its $j$th column is normalized $\|X_j\|_2 \leq \sqrt{n}$ for $j=1,\ldots, p$, and
\begin{align}
\label{sre:condef}
\theta(k, c_0) := \min_{\delta \in \mathcal{C}_{SRE} (k, c_0)\setminus \{0\}} \frac{\|X \delta\|_2}{\sqrt{n}\|\delta\|_2} >0,
\end{align}
where $\mathcal{C}_{SRE}(k, c_0) : = \big\{ \delta \in \mathbb{R}^p: \|\delta\|_1 \leq (1+ c_0) \sqrt{k} \|\delta\|_2 \big\}$ is a cone in $\mathbb{R}^p$
\end{definition}

According to this definition, if any $k$ columns of the matrix $X$ are linearly dependent (consequently the linear model is not identifiable on $\Theta_k$), then $\theta(k,c_0)=0$. Hence, the condition $\theta(k,c_0)>0$ ensures model identifiability. Moreover, it essentially requires that the Hessian for quadratic cost function has a positive curvature in directions over the cone $\mathcal{C}_{SRE}(k, c_0)$, which has implications for well-controlled estimation errors. To see how such a condition can be used to develop an upper bound for the minimax risk, consider the Lasso estimator \cite{tibshirani1996regression}:
\[
\hat{\beta}^{L} \in \argmin_{b \in \mathbb{R}^p} \frac{1}{2n}\|y-Xb\|_2^2 + \lambda \|b\|_1,
\]
where $y=(y_1,\ldots,y_n)$ is the response vector and $\|\cdot\|_1$ is the $\ell_1$-norm. The following theorem taken from \cite{bellec2018slope} obtains an upper bound for $\mathbb{E}_{\beta}\|\betaL- \beta\|_2^2$ based on the SRE condition.

\begin{theorem}[Corollary 4.4 in \cite{bellec2018slope}]
\label{thm:upperLASSOFixed}
Assume that $X$ satisfies the $SRE(k,7)$ condition. Let $\betaL$ denote the Lasso estimator with $\lambda$ satisfying 
\[
\lambda \geq {(8+2\sqrt{2}) \sigma} \sqrt{\frac{\log (2ep/k)}{n}}.
\]
Then, 
\begin{align}
\label{lasso:upper:fix}
R_F(X, \Theta_k, \sigma)\leq \sup_{\beta\in \Theta_k}\E_{\beta} \|\hat{\beta}^L- \beta\|_2^2 \leq  \frac{2401k\lambda^2}{64} \left( \frac{1}{\theta^4(k,7)} + \frac{1}{(\log(2ep))^2} \right). 
\end{align}

\end{theorem}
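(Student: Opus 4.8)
The plan is to derive the stated oracle inequality for the Lasso by following the standard "basic inequality" argument, but keeping track of constants carefully enough to reach the displayed bound. First I would start from the optimality of $\betaL$ in the penalized least-squares objective: plugging in the true $\beta$ gives
\[
\frac{1}{2n}\|y-X\betaL\|_2^2 + \lambda \|\betaL\|_1 \leq \frac{1}{2n}\|y-X\beta\|_2^2 + \lambda \|\beta\|_1.
\]
Substituting $y = X\beta + z$ and rearranging yields the basic inequality
\[
\frac{1}{2n}\|X(\betaL-\beta)\|_2^2 \leq \frac{1}{n} z^T X(\betaL-\beta) + \lambda\|\beta\|_1 - \lambda\|\betaL\|_1.
\]
The next step is to control the stochastic term $\tfrac1n z^T X(\betaL-\beta)$. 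Rather than a column-wise union bound (which would force $\lambda \gtrsim \sigma\sqrt{\log p /n}$), I would invoke the sharper empirical-process bound from \cite{bellec2018slope} that is tailored to the sorted-$\ell_1$ / $\ell_1$ geometry and produces the $\log(2ep/k)$ factor appearing in the hypothesis on $\lambda$; this is what lets the effective noise level be calibrated to $k$ rather than $p$. On the event where this bound holds, the right-hand side becomes a deterministic expression in $\delta := \betaL - \beta$ of the form $\tfrac{\lambda}{2}\big(\text{something}\big)\|\delta\|_1 + (\text{lower-order})$, together with the sign/monotonicity cancellation $\|\beta\|_1 - \|\betaL\|_1 \leq \|\delta_{S}\|_1 - \|\delta_{S^c}\|_1$ on the support $S$ of $\beta$ with $|S|\le k$.

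From here the argument splits in the usual way. If the "noise-dominated" regime holds — i.e. $\|\delta\|_2^2$ is already smaller than the target bound — we are done. Otherwise, the cancellation forces $\delta$ into the cone: one shows $\|\delta_{S^c}\|_1 \leq c_0 \|\delta_S\|_1$ with $c_0 = 7$ matching the $SRE(k,7)$ hypothesis (the constants $8+2\sqrt2$ and the numeric prefactors in the statement are exactly what make this cone constant come out to $7$), hence $\|\delta\|_1 \leq (1+7)\sqrt{k}\,\|\delta_S\|_2 \le 8\sqrt k\,\|\delta\|_2$, so $\delta \in \mathcal{C}_{SRE}(k,7)$. Then the $SRE$ condition gives $\|X\delta\|_2^2 \geq n\,\theta^2(k,7)\|\delta\|_2^2$, and combining this lower bound on the left of the basic inequality with the $\|\delta\|_1 \le 8\sqrt k\|\delta\|_2$ bound on the right yields a quadratic inequality in $\|\delta\|_2$ of the shape $\theta^2(k,7)\|\delta\|_2^2 \lesssim \lambda \sqrt{k}\,\|\delta\|_2$, which solves to $\|\delta\|_2^2 \lesssim k\lambda^2/\theta^4(k,7)$. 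The two regimes are merged by noting that the noise-dominated bound contributes the $1/(\log(2ep))^2$ term — this is where one uses the specific lower bound on $\lambda$ to absorb the residual $z^T X \delta$ contribution on the complement event and over the part of $\delta$ not captured by the cone argument — and a careful bookkeeping of all the multiplicative constants produces the factor $2401/64 = (7^4)/(2^6)$ and the sum of the two reciprocal-square terms.

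The main obstacle is the constant-tracking in the empirical-process step: getting the $\tfrac{\log(2ep/k)}{n}$ scaling rather than the crude $\tfrac{\log p}{n}$ requires the refined deviation inequality for $\sup \{z^TX\delta : \delta \in \text{cone}, \|\delta\|_2 \le 1\}$ from \cite{bellec2018slope} (their weighted/sorted version of the ``pattern'' argument), and then one must chase the numerical factors $8+2\sqrt2$, $7$, $2401$, $64$ through the cone inclusion and the final quadratic solve without slack. Since this is precisely Corollary 4.4 of \cite{bellec2018slope}, I would ultimately cite their proof for the sharp constants rather than re-deriving every inequality, and here only indicate how the $SRE(k,7)$ hypothesis, the lower bound on $\lambda$, and the two-term structure of \eqref{lasso:upper:fix} arise from the two regimes of the argument.
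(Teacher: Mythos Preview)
The paper does not give its own proof of this statement: Theorem~\ref{thm:upperLASSOFixed} is simply quoted as Corollary~4.4 of \cite{bellec2018slope} and used as background, with no argument supplied beyond the citation. Your proposal correctly identifies this and ultimately plans to cite \cite{bellec2018slope} for the sharp constants, which is exactly what the present paper does.

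Your sketch of the underlying argument (basic inequality, refined control of the stochastic term $\tfrac{1}{n}z^TXu$ via the sorted/weighted deviation inequality to get the $\log(2ep/k)$ scaling, decomposition $\|\beta\|_1-\|\betaL\|_1\le\|\delta_S\|_1-\|\delta_{S^c}\|_1$, cone inclusion with $c_0=7$, and the $SRE$ lower bound to close the quadratic) is indeed the skeleton of the proof in \cite{bellec2018slope}, and it is also the template the present paper adapts later in Lemmas~\ref{lem::bound-on-stochastic-error}--\ref{lem::lasso-l2-bound} for the smaller tuning $\lambda_\varepsilon$. One clarification: the additive $1/(\log(2ep))^2$ term does not come from a separate ``noise-dominated regime'' in the informal sense you describe, but rather from the $G(u)$ branch of the $\max(H(u),G(u))$ stochastic bound (the branch that controls $z^TXu$ directly through $\|Xu\|_2$ and a deviation parameter); integrating that tail in expectation is what produces the lower-order additive piece. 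But since you defer the constant-tracking to the cited reference anyway, this does not affect the validity of your plan.
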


Based on our earlier discussion of $\theta(k,7)$ as a curvature-type condition, it is natural to see that $\theta(k,7)$ appears in the denominator of the upper bound. Following \cite{bellec2018slope}, we mention two significant features of this upper bound:
\begin{itemize}
\item Choosing $\lambda={(8+2\sqrt{2}) \sigma} \sqrt{\frac{\log (2ep)}{n}}$, the upper bound can be simplified to $C_{\theta}\frac{ \sigma^2 k\log p}{n}$ for some constant $C_{\theta}>0$ depending on $\theta(k,7)$ whenever $p\geq 2$. Although upper bounds of this form are known from past work for $\|\hat{\beta}^L-\beta\|_2^2$ (with high probability) \cite{bickel2009simultaneous, ye2010rate, negahban2012unified}, Theorem \ref{thm:upperLASSOFixed} is the first result to obtain such bounds in expectation, i.e. for $\E_{\beta} \|\hat{\beta}^L-\beta\|_2^2$. The previous work provided only bounds in probability with the confidence level tied to the tuning parameter $\lambda$, thus not allowing for control of the moments of $\|\betaL- \beta\|_2^2$. Similar bounds in probability have been obtained for several other estimators such as Dantzig selector \cite{bickel2009simultaneous} and square-root Lasso \cite{belloni2011square}.
\item Choosing $\lambda={(8+2\sqrt{2}) \sigma} \sqrt{\frac{\log (2ep/k)}{n}}$, the upper bound is improved to $\tilde{C}_{\theta}\frac{ \sigma^2k\log(p/k)}{n}$ (with a different constant $\tilde{C}_{\theta}$). In light of a minimax lower bound to be shown shortly, the rate $\frac{ k\log(p/k)}{n}$ is the minimax optimal rate. The fact that Lasso can achieve the optimal rate (not just the suboptimal rate $\frac{ k\log p}{n}$) was not known before the work \cite{bellec2018slope}\footnote{The optimal rate was known to be attained by $\ell_0$-constrained least squares under a sparse eigenvalue condition \cite{raskutti2011minimax}, although the bound was derived in probability instead of in expectation.}.
\end{itemize}

To evaluate the tightness of the upper bound in Theorem \ref{thm:upperLASSOFixed}, we present one lower bound for the minimax risk taken from \cite{verzelen2012minimax}.

\begin{theorem}[Proposition 6.2 in \cite{verzelen2012minimax}]
\label{thm:LOWERCandes}
Suppose each column of $X$ has been normalized to $\sqrt{n}$, i.e. $\|X_j\|_2 = \sqrt{n}$ for all $j=1,\ldots, p$. For any $k\leq (n/4) \wedge (p/2)$, it holds that
\begin{equation}\label{eq:minimaxlower}
R_F(X, \Theta_k, \sigma) \geq C_1 \sigma^2 \max\Big(\frac{k\log(ep/k)}{n}, \frac{\exp\{C_2k/n\log(p/k)\}}{n}\Big),
\end{equation}
where $C_1, C_2>0$ are two universal constants.
\end{theorem}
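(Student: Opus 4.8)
The plan is to obtain both lower bounds inside the maximum from a single Fano-type construction, with one free magnitude parameter tuned differently for the two terms. Fix a Varshamov--Gilbert packing $\Omega$ of $k$-element subsets of $\{1,\dots,p\}$ with pairwise symmetric differences at least $k/2$ and $\log|\Omega|\ge c_0\,k\log(ep/k)$ for a universal $c_0>0$ (possible since $k\le p/2$). Draw $S$ uniformly from $\Omega$, draw independent signs $\epsilon_j\in\{\pm1\}$ for $j\in S$, and put $\beta=A\sum_{j\in S}\epsilon_j e_j\in\Theta_k$ for a magnitude $A>0$ to be chosen. Lower-bounding $R_F(X,\Theta_k,\sigma)$ by the Bayes risk of this prior, I would then use the standard reduction from estimation to testing: any estimator $\hat\beta$ yields a support guess $\hat S$, namely the support of the nearest packing point, and since two packing points with distinct supports have squared $\ell_2$-distance at least $A^2k/2$, the triangle inequality gives $\|\hat\beta-\beta\|_2^2\gtrsim A^2k$ on $\{\hat S\ne S\}$, hence $R_F(X,\Theta_k,\sigma)\gtrsim A^2 k\cdot\Pr(\hat S\ne S)$.

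The crux is to keep $\Pr(\hat S\ne S)$ bounded away from $0$, for which I would invoke Fano's inequality, $\Pr(\hat S\ne S)\ge 1-(I(S;y)+\log 2)/\log|\Omega|$, and bound the mutual information \emph{through the Gaussian-channel capacity} rather than through pairwise Kullback--Leibler divergences. Data processing along $(S,\epsilon)\to X\beta\to y$ and the maximum-entropy inequality give
\[
I(S;y)\ \le\ I\big((S,\epsilon);y\big)\ \le\ \frac n2\log\!\Big(1+\frac{\E\,\|X\beta\|_2^2}{n\sigma^2}\Big),
\]
and since the signs are independent and mean zero the cross terms vanish, so $\E\,\|X\beta\|_2^2=A^2\sum_{j}\Pr(j\in S)\,\|X_j\|_2^2=A^2kn$ \emph{exactly} (using $\|X_j\|_2=\sqrt n$ and $\E|S|=k$), no matter how badly conditioned $X$ is; thus $I(S;y)\le \tfrac n2\log(1+A^2k/\sigma^2)$, and $\Pr(\hat S\ne S)\ge 1/4$ (say) whenever $\tfrac n2\log(1+A^2k/\sigma^2)\le \tfrac12\log|\Omega|$. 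This step is the heart of the matter, and the place I expect the real difficulty: a naive Fano argument would instead carry the quantity $\tfrac1{2\sigma^2}\|X(\beta-\beta')\|_2^2$, which for an ill-conditioned column-normalized $X$ can be as large as $kn\|\beta-\beta'\|_2^2/\sigma^2$ and so loses a spurious factor $k$; the randomization of signs is precisely what removes this and lets the bound hold for \emph{every} column-normalized $X$ (with the degenerate, non-identifiable designs covered trivially since then $R_F=\infty$).

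It remains to choose $A$. For the polynomial term, take $A^2\asymp \sigma^2\log(ep/k)/n$; then $\log(1+A^2k/\sigma^2)\le A^2k/\sigma^2\asymp k\log(ep/k)/n$, which satisfies the capacity constraint once the constant is small relative to $c_0$, and $R_F\gtrsim A^2k\asymp \sigma^2 k\log(ep/k)/n$. For the exponential term, take $A$ at the boundary of the constraint, $A^2\asymp (\sigma^2/k)\big(\exp\{c\,k\log(p/k)/n\}-1\big)$ with $c\le c_0$; the constraint then holds by construction and $R_F\gtrsim A^2k\asymp \sigma^2\big(\exp\{c\,k\log(p/k)/n\}-1\big)\gtrsim \sigma^2\exp\{C_2\,k\log(p/k)/n\}/n$ for a suitable universal $C_2>0$. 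Taking the larger of the two bounds gives the assertion. The only regime left out is $k\log(ep/k)=O(1)$, where $\log|\Omega|$ is too small for Fano to bite; there a Le Cam two-point argument with $\beta\in\{0,\gamma e_1\}$ and $\gamma^2\asymp\sigma^2/n$ already yields $R_F\gtrsim\sigma^2/n$, which matches both terms up to constants in that regime. Beyond the mutual-information bound, everything else is routine bookkeeping of universal constants.
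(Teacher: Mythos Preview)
The paper does not prove this theorem at all: it is quoted verbatim as Proposition~6.2 of \cite{verzelen2012minimax} and used only for context, so there is no ``paper's own proof'' to compare against.

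That said, your proposal is a correct and well-organized argument, and it is essentially the route taken in the cited reference. The key insight---bounding the mutual information through the Gaussian channel capacity $I\big((S,\epsilon);y\big)\le \tfrac{n}{2}\log\!\big(1+\E\|X\beta\|_2^2/(n\sigma^2)\big)$ and exploiting the sign randomization so that $\E\|X\beta\|_2^2=A^2kn$ depends on $X$ only through its column norms---is exactly what is needed to make the lower bound hold for \emph{every} column-normalized design, not just well-conditioned ones. You are also right that the naive pairwise-KL Fano approach would fail here without extra structural assumptions on $X$. The two choices of $A$ then read off the polynomial and exponential terms cleanly, and your Le~Cam fallback handles the boundary regime where $\log|\Omega|$ is bounded. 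A couple of minor bookkeeping points: in the reduction step, the ``nearest packing point'' should be taken over the full set $\{A\sum_{j\in S}\epsilon_j e_j: S\in\Omega,\ \epsilon\in\{\pm1\}^S\}$ (you implicitly do this), and the passage from $\sigma^2(e^{cx}-1)$ to $\sigma^2 e^{C_2 x}/n$ in the exponential term relies on already having the polynomial bound in hand for small $x=k\log(p/k)/n$, which you do. The hypothesis $k\le n/4$ is not actually used in your argument; it appears in the original for other reasons, and your proof goes through under $k\le p/2$ alone.
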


The type of lower bound $C_1\sigma^2k/n\log(p/k)$ has been derived in several papers under different conditions: \cite{raskutti2011minimax} assumes boundedness for both $k/n\log(p/k)$ and the maximum $2k$-sparse eigenvalue\footnote{The lower bound of \cite{raskutti2011minimax} is in probability, but it implies the lower bound in expectation via Markov's inequality.}; \cite{candes2013well} only requires $k\leq n$, and \cite{bellec2018slope} has the minimal condition $k \leq p/2$. An interesting aspect of Theorem \ref{thm:LOWERCandes} is that in the ultra-high dimensional regime where $k/n\log(p/k)\gg \log n$, the term $C_1\sigma^2\frac{1}{n}\exp\{C_2\frac{k}{n}\log(\frac{p}{k})\}$ becomes dominating in the lower bound. This does not contradict with the upper bound $\tilde{C}_{\theta}\sigma^2 k/n\log(p/k)$ from Theorem \ref{thm:upperLASSOFixed}, because the SRE condition will not hold in such an ultra-high dimensional setting. In contrast, when $k/n\log(p/k)$ is bounded, the SRE condition holds with high probability if the rows of $X$ are independent realizations of a large class of distributions \cite{rudelson2013reconstruction, lecue2017sparse, bellec2018slope}. 

In light of the aforementioned results, let us focus on the regime where $k/n\log(p/k)\leq C_3$ and $p\geq 2k$. Then, combining Theorems \ref{thm:upperLASSOFixed} and \ref{thm:LOWERCandes} we can reach the following conclusion about the minimax risk $R_F(X, \Theta_k, \sigma)$:
\begin{equation}\label{eq:ro_minimax_final}
\frac{c \sigma^2k \log (p/k) }{n} \leq R_F(X, \Theta_k, \sigma) \leq \frac{C  \sigma^2 k\log (p/k)}{n},
\end{equation}
as long as $X$ satisfies the $SRE(k,7)$ condition (which is true for a large set of matrices). Moreover, the Lasso estimator is minimax rate-optimal. This approximation approach offers very general non-asymptotic bounds such as the ones that have appeared in \eqref{eq:ro_minimax_final} to approximate the minimax risk, which is order-wise accurate. However, since the focus is on the optimal rate, the constants that appear in the upper and lower bounds might not be sharp. Results with sharper constants have been developed, for example, by considering a smaller value of $\lambda$ compared to the one in Theorem \ref{thm:upperLASSOFixed}. Since these results often appear as high-probability bounds instead of expectation bounds, we defer the discussion to Section \ref{ssec:asymptot}. One notable result is Theorem 3.13 in \cite{bellec2021second}, which provides a constant-sharp upper bound for Lasso, in terms of expected prediction error. As will be discussed in detail in Section \ref{proof:lasso:sharp}, some analyses in the current paper are motivated by \cite{bellec2021second} in order to obtain constant-sharp results for the minimax risk.

\subsection{Rate-optimal results under the random design}

In numerous scenarios, a more appropriate assumption regarding the data is that $X$ is generated via a random mechanism. Consequently, assessing the risk solely over the specific dataset observed thus far is not desirable. Instead, we want our estimation procedure to generalize well to future samples it encounters. In such cases, the random-design minimax risk  defined below is a better criterion:
\begin{equation}\label{eq:minimaxriskRD_2}
    R_R(\Theta_k, \sigma) := \inf_{\hat{\beta}} \sup_{ \beta \in \Theta_k} \E_{\beta} \|\hat{\beta} - \beta\|_2^{2}.
\end{equation}
Compared to $R_F(X, \Theta_k, \sigma)$ discussed in the last section, the expectation in \eqref{eq:minimaxriskRD_2} is now with respect to both the noise vector $z$ and the design matrix $X$, assuming independence between $z$ and $X$. This seemingly straightforward change may further complicate the task of approximating the minimax risk. To quickly understand the issue, suppose $\max_{j}\|X_j\|_2\leq \sqrt{n}$. Then the expectation of the upper bound in \eqref{lasso:upper:fix} gives an upper bound for $R_R(\Theta_k, \sigma)$. This requires calculating the expectation of $\frac{1}{\theta^4 (k,7)}$. However, calculating this expectation and proving that it is finite, is not straightforward. \cite{verzelen2012minimax} managed to obtain a tight upper bound for $R_R(\Theta_k, \sigma)$ by analyzing the risk of a model selection procedure, and a matching lower bound was also derived. We summarize the results in the following theorem:  

\begin{theorem}[Proposition 6.4 of \cite{verzelen2012minimax}]
Suppose $\{x_i\}_{i=1}^n \overset{i.i.d}{\sim} \mathcal{N}(0,\Sigma)$ and $\Sigma$ has ones on the diagonal. In the regime where $k/n\log(p/k)\leq C_1$ and $p\geq 2k$, it holds that 
\[
C_2 \frac{\sigma^2 k\log(p/k) }{n}\leq R_R(\Theta_k, \sigma) \leq C_3 \frac{\sigma^2k\log (p/k) }{n\bar{\theta}_{2k}},
\]
where $C_1,C_2,C_3>0$ are universal constants, and $\bar{\theta}_{2k}=\min_{\delta \in \Theta_{2k}\setminus \{0\}}\frac{\delta'\Sigma \delta}{\delta'\delta}$ is the minimal $2k$-sparse eigenvalue of $\Sigma$. 
\end{theorem}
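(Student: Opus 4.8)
The two bounds are proved separately: the upper bound by analysing a best-subset (model-selection) estimator, and the lower bound by a Fano argument over a random sparse packing. \emph{For the upper bound}, take $\hat\beta \in \argmin_{\|b\|_0 \le k}\|y-Xb\|_2^2$ (equivalently, an $\ell_0$-penalized least-squares estimator with penalty of order $\sigma^2\log(p/k)$), and write $\mu_{2k}(X) := \min_{0\ne u\in\Theta_{2k}} u^T X^T X u/(n\|u\|_2^2)$ for the smallest $2k$-sparse eigenvalue of the Gram matrix. Introduce the event
\[
\mathcal{A} = \big\{\mu_{2k}(X) \ge \bar\theta_{2k}/2\big\} \cap \big\{\max_{1\le j\le p}\|X_j\|_2^2 \le 2n\big\}.
\]
When $k\log(p/k)/n$ is below a small universal constant $C_1$, restricted-isometry concentration for Gaussian designs (the estimates underlying the restricted-eigenvalue results cited earlier, e.g.\ \cite{rudelson2013reconstruction,lecue2017sparse}) gives $\mathbb{P}(\mathcal{A}^c)\le e^{-cn}$. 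On $\mathcal{A}$, the basic inequality $\|y-X\hat\beta\|_2^2 \le \|y-X\beta\|_2^2$ combined with a union bound over the $\binom{p}{2k}$ possible supports of $\hat\beta-\beta\in\Theta_{2k}$ yields $\|X(\hat\beta-\beta)\|_2^2 \lesssim \sigma^2 k\log(ep/k)$ with overwhelming probability, so that, using $\mu_{2k}(X)\ge\bar\theta_{2k}/2$ and integrating the Gaussian tails, $\E_z[\|\hat\beta-\beta\|_2^2\,\mathbbm{1}_{\mathcal{A}}] \le C\sigma^2 k\log(p/k)/(n\bar\theta_{2k})$. On $\mathcal{A}^c$ one still has $\mu_{2k}(X)>0$ almost surely (since $2k<n$ in this regime), so the crude bound $\|\hat\beta-\beta\|_2 \le 2\sqrt{2k}\,\|X^Tz\|_\infty/(n\,\mu_{2k}(X))$ holds; since $\mu_{2k}(X)^{-1}$ is governed by the smallest eigenvalue of a $2k$-dimensional Wishart matrix on $n-2k\gtrsim n$ degrees of freedom its inverse moments are finite, and a Cauchy--Schwarz split against $\mathbb{P}(\mathcal{A}^c)^{1/2}$ — using that $C_1$ small forces $\binom{p}{2k}\le e^{cn/2}$ — makes the $\mathcal{A}^c$-contribution exponentially small. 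Adding the two pieces gives the claimed upper bound.

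\emph{For the lower bound}, one cannot simply transfer the fixed-design bound of Theorem~\ref{thm:LOWERCandes}, since the supremum over $\beta$ does not commute with the expectation over $X$; instead it suffices to exhibit a prior on $\Theta_k$ whose Bayes risk is at least $C_2\sigma^2 k\log(p/k)/n$. Draw $M=\lceil e^{c_0 k\log(p/k)}\rceil$ independent candidates $\beta_1,\dots,\beta_M$, each with a uniformly random support of size $k$ and nonzero entries $\pm\gamma$ carrying independent uniform signs. Since the marginal law of $X$ does not depend on $\beta$, the Kullback--Leibler divergence between the two joint laws of $(X,y)$ at $\beta_i$ and $\beta_j$ equals $n(\beta_i-\beta_j)^T\Sigma(\beta_i-\beta_j)/(2\sigma^2)$, and because $\Sigma$ has unit diagonal the random signs give $\E[(\beta_i-\beta_j)^T\Sigma(\beta_i-\beta_j)]=2k\gamma^2$ for every $i\ne j$ — the off-diagonal entries of $\Sigma$ average out, which is what makes the bound insensitive to $\Sigma$. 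A routine union bound then shows that with positive probability the drawn code is simultaneously such that (i) the $\beta_i$ are distinct with $\|\beta_i-\beta_j\|_2^2\ge k\gamma^2/2$ for all $i\ne j$ and (ii) the average pairwise KL divergence is at most $3nk\gamma^2/\sigma^2$. Fixing such a code, bounding the mutual information by the average pairwise KL and invoking Fano's inequality yields a Bayes risk at least a fixed multiple of $k\gamma^2$ as soon as $nk\gamma^2/\sigma^2 \le c_1 k\log(p/k)$; choosing $\gamma^2 = c_1\sigma^2\log(p/k)/n$ then gives $R_R(\Theta_k,\sigma) \ge C_2\sigma^2 k\log(p/k)/n$.

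\emph{The main obstacle} is the passage from a high-probability bound to an in-expectation bound in the upper half of the argument: because $\Theta_k$ contains signals of unbounded norm, $\|\hat\beta-\beta\|_2^2$ cannot be controlled uniformly, so one must verify that the smallest sparse singular value of $X$ has a left tail light enough that its inverse moments are finite \emph{and} are swamped by $\mathbb{P}(\mathcal{A}^c)$. This is precisely the obstruction flagged after Theorem~\ref{thm:upperLASSOFixed} concerning the expectation of $1/\theta^4(k,7)$, and the quantitative input that makes it go through is choosing $C_1$ small enough that $\binom{p}{2k}$ is dominated by the concentration rate $e^{-cn}$. The lower bound is comparatively routine once the $\Sigma$-invariance of the averaged divergence is observed; the only care needed there is to keep the packing exponent $c_0$ small enough for all the concentration events to hold at once.
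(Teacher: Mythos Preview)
The paper does not prove this statement; it is quoted verbatim as Proposition~6.4 of \cite{verzelen2012minimax}, with only the one-line remark that the upper bound there comes from ``analyzing the risk of a model selection procedure.'' Your sketch follows exactly that template---best subset for the upper bound, Fano over a random-sign packing for the lower bound---and the lower-bound half is clean: the random-sign device that forces $\E[(\beta_i-\beta_j)^T\Sigma(\beta_i-\beta_j)]=2k\gamma^2$ regardless of the off-diagonal of $\Sigma$ is precisely what makes the bound $\Sigma$-free, and bounding mutual information by the average pairwise KL is legitimate via convexity of $KL(P\|\cdot)$. It is also worth noting that your good-event/bad-event split with inverse-moment control of the sparse eigenvalue is exactly the mechanism the paper itself deploys later (Proposition~\ref{prop::MLE-bound} and Lemma~\ref{lem::inverse-min-eigenvalue}) for its own main theorem in the isotropic case.

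There is one place where the sketch, as written, loses the stated dependence on $\bar\theta_{2k}$. On $\mathcal{A}^c$ you pass through Cauchy--Schwarz and a fourth moment of $\mu_{2k}(X)^{-1}$; for general $\Sigma$ the quantity that behaves like an isotropic sparse eigenvalue is $\mu_{2k}(X)/\bar\theta_{2k}$, so $\E[\mu_{2k}(X)^{-4}]\asymp\bar\theta_{2k}^{-4}$ and the $\mathcal{A}^c$ contribution carries $\bar\theta_{2k}^{-2}$ rather than $\bar\theta_{2k}^{-1}$. The fix is to avoid the $\ell_\infty$ crude bound altogether: from the basic inequality write $\|X(\hat\beta-\beta)\|_2\le 2\sup_{|T|=2k}\|P_Tz\|_2$ with $P_T$ the projection onto the column span of $X_T$, so that $\|\hat\beta-\beta\|_2^2\le 4n^{-1}\mu_{2k}(X)^{-1}\sup_T\|P_Tz\|_2^2$. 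Conditionally on $X$, $\sup_T\|P_Tz\|_2^2$ is a maximum of $\binom{p}{2k}$ variables distributed as $\sigma^2\chi^2_{2k}$, with conditional expectation $O(\sigma^2k\log(p/k))$ uniformly in $X$; taking $\E_X$ then costs only a single factor $\E[\mu_{2k}(X)^{-1}]\lesssim\bar\theta_{2k}^{-1}$. With this refinement the $\mathcal{A}/\mathcal{A}^c$ split is not even needed for the upper bound.
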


As long as the sparse eigenvalue $\bar{\theta}_{2k}$ is bounded away from zero (which holds for rather general $\Sigma$'s \cite{raskutti2010restricted}), the minimax risk $R_R(\Theta_k, \sigma)$ satisfies the same type of upper and lower bounds as in \eqref{eq:ro_minimax_final}. Similar to the case of the fixed design, there can be a gap between the constants that appear in the upper and lower bounds. This issue has led researchers to explore another approach that we will describe next.

\subsection{Asymptotic approximation of minimax risk } \label{ssec:asymptot}

The issue raised earlier regarding the loose constants in the upper and lower bounds of the minimax risk has been recognized and deliberated upon by many researchers. One proposed solution is to acquire a reliable approximation of the constants through asymptotic arguments. This approach was initially advocated by Donoho and Johnstone in the orthogonal design setting where $\frac{1}{n}X^TX=I_p$. In this simpler case of sparse linear regression, Donoho and Johnstone \cite{donoho1992maximum, donoho1994minimax, johnstone19} demonstrated that
\[
 R_F(X, \Theta_k, \sigma) = \frac{(2+o(1))\sigma^2k \log(p/k)}{n},
 \]
 as $p \rightarrow \infty$ and $k/p \rightarrow 0$.

Unfortunately, there has been limited exploration of this approach for the broader context of linear regression. A precise asymptotic approximation for $R_F(X, \Theta_k, \sigma)$ (under more general design) or $R_R(\Theta_k, \sigma)$ is still lacking. To our knowledge, there exist only several works that have obtained sharp constants \cite{su2016slope, feng2019sorted, ndaoud2020scaled, bellec2018noise, bellec2019first}. The work~\cite{su2016slope} studies the Sorted L-One Penalized Estimator (SLOPE) introduced in \cite{bogdan2015slope} in a closely related context to ours. For $\lambda_{1}\geq\lambda_{2}\geq \cdots\geq \lambda_{p}\geq 0$, the SLOPE estimator is defined as a solution of the minimization problem 
\begin{equation*}
    \hat{\beta}_{\rm SLOPE}\in \argmin_{b\in \R^{p}} \frac{1}{2n}\|y-Xb\|_2^{2} + \sum_{j=1}^p\lambda_{j}|b|_{(j)} ,
\end{equation*}
where $|b|_{(1)}\geq|b|_{(2)}\geq \cdots \geq |b|_{(p)}$ are the order statistics of $|b_{1}|,|b_{2}|,\ldots, |b_{p}|$. To show the optimality of SLOPE, \cite{su2016slope} has proved the following results.

\begin{theorem}[Theorem 1.2 \& 1.3 in \cite{su2016slope}]\label{thm:weijie}
    Assume model \eqref{model::gaussian-model} with random Gaussian design $\{x_{i}\}_{i=1}^{n} \simiid \calN(0, I_{p})$ and parameter space \eqref{param::sparse}. Suppose $k/p \rightarrow 0$ and $(k\log p)/n \rightarrow 0$. 
    \begin{itemize}
    \item[(i)] For any $\varepsilon > 0$, 
    \begin{equation*}
        \inf_{\hat{\beta}} \sup_{\beta \in \Theta_k} \mathbb{P}\bigg( \|\hat{\beta}- \beta\|_2^{2} > \frac{(2-2\varepsilon)\sigma^2k\log(p/k)}{n} \bigg) \rightarrow 1.
    \end{equation*}
    \item[(ii)] Fix $0<q<1$ and set $\lambda_i = \sigma(1+\varepsilon)n^{-1/2}\Phi^{-1} (1- iq/(2p))$, where $\Phi^{-1}$ is the quantile function of a standard normal and $\varepsilon \in (0,1)$. Then, the SLOPE estimator satisfies
    \begin{equation*}
        \sup_{\beta \in \Theta_k} \mathbb{P}\bigg(\|\hat{\beta}_{{\rm SLOPE}}- \beta\|_2^{2}> \frac{(2+6\varepsilon)\sigma^2k\log(p/k)}{n} \bigg) \rightarrow 0.
    \end{equation*}
    \end{itemize}
\end{theorem}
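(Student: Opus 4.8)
The statement splits into two essentially independent halves — the minimax lower bound in part (i) and the achievability result for SLOPE in part (ii) — and the plan is to attack them with different tools: a Bayesian decoupling argument for (i), and an oracle inequality with carefully matched penalty weights for (ii). For part (i) I would lower-bound the minimax quantity by a Bayes risk under a least-favorable prior supported on (a set essentially equal to) $\Theta_k$: draw the support $S$ uniformly among the $\binom{p}{k}$ subsets of size $k$, put independent random signs on $S$, and fix the nonzero magnitude at $\mu$ with $\mu^2 = (2-\epsilon)\sigma^2\log(p/k)/n$, i.e. just below the sparse-detection threshold. The sufficient statistic is $X^\top y = X^\top X\beta + X^\top z$; writing $\tfrac1n X_j^\top y = \beta_j + \tfrac1n X_j^\top z + \sum_{\ell\in S,\ \ell\ne j}\beta_\ell\,\tfrac1n X_j^\top X_\ell$ and using $\|X_j\|_2^2 = n(1+o(1))$ together with $\mathbb{E}[(\tfrac1n X_j^\top X_\ell)^2] = 1/n$ for $\ell\ne j$, the cross-contamination term has conditional variance $O(k\mu^2/n) = O(k\log(p/k)\sigma^2/n^2) = o(\sigma^2/n)$ because $(k\log p)/n\to 0$. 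Thus, up to a negligible perturbation, the posterior of each $\beta_j$ depends only on $\tfrac1n X_j^\top y \approx \mathcal{N}(\beta_j,\sigma^2/n)$, and the problem splits into $p$ nearly independent scalar three-point estimation problems at the detection boundary; for such a problem the posterior estimates a true signal coordinate as essentially $0$ with probability tending to $1$, incurring loss $\approx\mu^2$. Hence the Bayes loss is at least $(1-o(1))k\mu^2 = (2-2\epsilon)\sigma^2 k\log(p/k)/n$, and since the per-coordinate contributions are only weakly dependent, the loss concentrates around this value, which is what upgrades the bound from ``in expectation'' to the ``$\mathbb{P}(\cdot)\to 1$'' form in the statement. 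The main obstacle in (i) is precisely this concentration: one must control the weak dependence among the $p$ coordinate problems — induced both by the random design and by the constraint $|S|=k$ — tightly enough to conclude about the probability and not merely the mean; truncating or conditioning the prior to force exact $k$-sparsity is a routine but fiddly accompanying step.

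For part (ii) the plan is to combine three ingredients and then do careful constant bookkeeping. First, noise control: since $\|X_j\|_2^2 = n(1+o(1))$, the score $\tfrac1n X_j^\top z$ is approximately $\mathcal{N}(0,\sigma^2/n)$, and with probability tending to $1$, simultaneously over all $i$, the $i$-th largest of $\{|X_j^\top z|/n\}_{j=1}^p$ is at most $\sigma(1+\epsilon)n^{-1/2}\Phi^{-1}(1-iq/(2p)) = \lambda_i$; this is exactly why these weights are the right ones — they dominate the sorted score of the noise, with both $q<1$ and the factor $1+\epsilon$ supplying the slack needed to absorb the fluctuations of the order statistics. Second, a deterministic cone/oracle inequality: on that event the SLOPE optimality conditions force $h = \hat{\beta}_{\rm SLOPE}-\beta$ into a cone on which the sorted-$\ell_1$ norm of $h$ is controlled by $\|h\|_2$ times an effective sparsity of order $k$, and combined with a restricted-eigenvalue lower bound $\|Xh\|_2^2/n \ge (1-\epsilon)\|h\|_2^2$ over that cone this yields $\|h\|_2^2 \le (1+o(1))(1-\epsilon)^{-1}\sum_{i=1}^k\lambda_i^2$. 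Third, the restricted eigenvalue itself: the SLOPE cone intersected with the unit sphere has Gaussian width of order $\sqrt{k\log(p/k)} = o(\sqrt n)$, so a Gaussian comparison (Gordon/Chevet) gives the needed lower bound with a constant arbitrarily close to $1$. Finally, $\sum_{i=1}^k\lambda_i^2 = \sigma^2(1+\epsilon)^2 n^{-1}\sum_{i=1}^k\Phi^{-1}(1-iq/(2p))^2$, and since $\Phi^{-1}(1-iq/(2p))^2 = 2\log(p/(iq))(1+o(1))$ uniformly for $i\le k$ and $p/k\to\infty$, this equals $2(1+\epsilon)^2(1+o(1))\sigma^2 k\log(p/k)/n$; the total multiplicative slack is $2\cdot(1+\epsilon)^2\cdot(1-\epsilon)^{-1} \le 2+6\epsilon$ for small $\epsilon$, which is exactly the claimed constant, and the bound holds with probability tending to $1$.

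The hard part is the sharpness required in the second and third ingredients of (ii): the cone and restricted-eigenvalue arguments must be carried out for the \emph{sorted}-$\ell_1$ geometry with every multiplicative constant driven arbitrarily close to $1$, so that nothing inflates the leading factor $2$ — this is where the analysis must go beyond the usual Lasso-type estimates, which would only produce the suboptimal constant. On the lower-bound side, the analogous delicate point is that the detection-threshold calibration of $\mu$ and the decoupling of the $p$ scalar problems must both be exact to first order, so that the constant $2$ emerges rather than some larger universal constant.
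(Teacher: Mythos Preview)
The paper does not prove this theorem. It is quoted verbatim as ``Theorem 1.2 \& 1.3 in \cite{su2016slope}'' and used as a black box: Part (i) is invoked together with Markov's inequality to obtain the minimax lower bound \eqref{minimax:lower:form}, while Part (ii) is discussed only to explain why the high-probability upper bound for SLOPE does \emph{not} directly yield the in-expectation minimax upper bound the paper is after. The paper's own technical work (Section~\ref{roadmap:sec:part} and the appendices) goes in a different direction entirely --- it builds a hybrid Lasso/MLE estimator and proves a sharp \emph{risk} bound for it, rather than re-deriving the SLOPE result.

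Your sketch is therefore not comparable to anything in this paper, but it is a fair outline of how the original reference \cite{su2016slope} proceeds. For (i), the least-favorable-prior construction at the detection threshold $\mu^2 \asymp 2\sigma^2\log(p/k)/n$ and the decoupling into near-independent scalar problems is the right picture; the genuinely nontrivial step you flag --- upgrading from a Bayes-risk lower bound to a ``with probability $\to 1$'' statement via concentration across weakly dependent coordinates --- is indeed where the work lies. For (ii), your three ingredients (sorted noise control matching the $\lambda_i$, a cone/oracle inequality specific to the sorted-$\ell_1$ geometry, and an RE constant driven to $1$ via Gaussian-width control) are exactly the structure of the Su--Cand\`es argument, and your constant bookkeeping $2(1+\epsilon)^2/(1-\epsilon)\le 2+6\epsilon$ is the right endgame. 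So as a reconstruction of \cite{su2016slope} your plan is sound; just be aware that the present paper neither proves nor needs to prove this statement.
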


Corollary 4.3 in~\cite{bellec2018noise} and Theorem 4.1 in~\cite{bellec2019first} obtain similar high-probability upper bounds for the Lasso estimator. Theorem 5 in~\cite{ndaoud2020scaled} obtains the sharp high-probability bound for a variant of the Iterative Hard Thresholding (IHT) estimator~\cite{blumensath2009iterative}. \cite{feng2019sorted} develops a sharp high-probability upper bound for a large family of concave penalized estimators.

Intuitively speaking, Theorem \ref{thm:weijie}, along with the corresponding results from~\textcolor{black}{\cite{bellec2018noise, bellec2019first, feng2019sorted,ndaoud2020scaled}}, suggests the following for the minimax risk $R_R(\Theta_k, \sigma)$: as $k/p \rightarrow 0$ and $(k\log p)/n \rightarrow 0$, the minimax risk is approximately $2\sigma^2k/n \log (p/k)$, and that \textcolor{black}{estimators such as} SLOPE, Lasso, and IHT asymptotically achieve the minimax risk. However, these theorems do not exactly characterize the minimax risk. This is because the minimax risk $R_R(\Theta_k, \sigma)$ defined in \eqref{eq::sparse-minimax} is based on the expected squared loss $\mathbb{E} \|\hat{\beta}- \beta\|_2^2$, while these theorems characterize the high-probability events for the squared loss $\|\hat{\beta}-\beta\|_2^2$. It is sometimes challenging to convert high-probability bounds on the squared loss to sharp (specially if we want sharp constants) bounds for the risk. To be more precise, Markov's inequality gives 
\begin{align*}
  \inf_{\hat{\beta}} \sup_{\beta \in \Theta_k} \mathbb{P}\bigg( \|\hat{\beta}- \beta\|_2^{2} > \frac{(2-2\varepsilon)\sigma^2k\log(p/k)}{n} \bigg) \leq \frac{\inf_{\hat{\beta}} \sup_{\beta \in \Theta_k} \E_{\beta} \|\hat{\beta}- \beta\|_2^{2}}{(2-2\varepsilon)\sigma^2k/n\log(p/k)}.
\end{align*}
Based on Part (i) of Theorem \ref{thm:weijie}, letting $n\rightarrow \infty$ and then $\varepsilon \rightarrow 0+$ yields
\begin{align}
\label{minimax:lower:form}
\liminf_{n\rightarrow \infty} \frac{R_R(\Theta_k,\sigma)}{\sigma^2k/n\log(p/k)}  \geq 2. 
\end{align}
We thus have converted the high-probability lower bound from Theorem \ref{thm:weijie} to a sharp lower bound for the minimax risk. However, the upper bound in Part (ii) of Theorem \ref{thm:weijie}, and the corresponding results from~\cite{bellec2018noise, bellec2019first, feng2019sorted,ndaoud2020scaled} are not directly transferable to a sharp upper bound for the minimax risk.

In summary, for both the fixed design (non-orthogonal) and random design cases, the asymptotic analysis of minimax risk that achieves sharp constants has remained an open problem. The main contribution of this paper is to show that for the isotropic Gaussian design,
    \begin{equation*}
        R_R(\Theta_k, \sigma) = \frac{(2+o(1))\sigma^2k\log(p/k)}{n}.
    \end{equation*}
As will be clarified later in the paper, in order to achieve this goal we have to address several technical challenges. We believe that our solutions can help create a viable path for evaluating the asymptotic approximations of $R_R(\Theta_k, \sigma)$ and $R_F (X, \Theta_k, \sigma)$ in more general settings, and even for problems beyond the sparse linear regression.

\section{Our main contribution}

As we described in the previous section, despite an extensive body of work on the sparse linear regression, the asymptotically exact characterization of the minimax risk has remained largely open. The main contribution of this paper is the following constant-sharp calculation for the minimax risk under Gaussian random designs.

\begin{theorem}\label{thm::first-order-sparse-minimax}
    Assume model \eqref{model::gaussian-model} with random Gaussian design $\{x_{i}\}_{i=1}^{n} \simiid \calN(0, I_{p})$ and parameter space \eqref{param::sparse}. As $k/p \rightarrow 0$ and \textcolor{black}{$\big(k \log (p/k)\big)/n \rightarrow 0$}, the minimax risk defined in \eqref{eq::sparse-minimax} satisfies
    \begin{equation*}
        R_R(\Theta_k, \sigma) = \frac{(2+o(1))\sigma^2k\log(p/k)}{n}.
    \end{equation*}
\end{theorem}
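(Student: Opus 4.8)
The plan is to treat the two bounds separately; the lower bound is essentially free and the upper bound carries all of the difficulty. For the lower bound: since $(k\log p)/n\to 0$ forces both $k/p\to 0$ and $\tfrac kn\log(p/k)\to 0$, Part~(i) of Theorem~\ref{thm:weijie} applies, and combining it with Markov's inequality exactly as in the derivation of \eqref{minimax:lower:form} already gives $\liminf_n R_R(\Theta_k,\sigma)\big/\big(\tfrac{\sigma^2 k}{n}\log(p/k)\big)\ge 2$. No further work is needed there, so the whole content of the theorem is the matching upper bound: exhibiting an estimator $\hat\beta$ with $\sup_{\beta\in\Theta_k}\E_\beta\|\hat\beta-\beta\|_2^2\le \tfrac{(2+o(1))\sigma^2 k\log(p/k)}{n}$.

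The natural candidate is the SLOPE estimator with the tuning of Theorem~\ref{thm:weijie}(ii), which attains the sharp constant $2$ — but only in probability, and controlling its risk \emph{in expectation}, uniformly over the \emph{unbounded} set $\Theta_k$, is delicate: on atypical designs (a near-collinearity among the relevant columns, an anomalously small relevant column norm) the $\ell_2$-error can be as large as $\|\beta\|_2$, so a direct conversion of the in-probability bound does not suffice. I would therefore use a composite estimator. First, rework the SLOPE analysis quantitatively to isolate an event $G=G(\epsilon_n)$, depending on $X,z$ but not on $\beta$, on which $\|\hat\beta_{\mathrm{SLOPE}}-\beta\|_2^2\le \tfrac{(2+o(1))\sigma^2 k\log(p/k)}{n}$ holds deterministically and uniformly in $\beta$, with $\mathbb P(G^c)=:\epsilon_n\to 0$ at a controlled rate (the SLOPE tuning being allowed to degrade slowly). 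Next, let $\hat\beta$ equal $\hat\beta_{\mathrm{SLOPE}}$ unless a computable sanity check fails, in which case $\hat\beta$ equals a robust rate-optimal model-selection estimator $\hat\beta^{0}$ of the kind analyzed in \cite{verzelen2012minimax}, for which $\sup_{\beta\in\Theta_k}\E_\beta\|\hat\beta^{0}-\beta\|_2^2=O\!\big(\tfrac{\sigma^2 k\log(p/k)}{n}\big)$ for every $\beta\in\Theta_k$; the check is arranged so that on $G$ (intersected with a high-probability event for $\hat\beta^{0}$) it passes and SLOPE is used. Finally, split $\E_\beta\|\hat\beta-\beta\|_2^2$ into the contributions of $G$ and $G^c$: the first is $\le \tfrac{(2+o(1))\sigma^2 k\log(p/k)}{n}$ by construction, and the second should be $o\!\big(\tfrac{\sigma^2 k\log(p/k)}{n}\big)$ uniformly, via the a priori bound $\|\hat\beta_{\mathrm{SLOPE}}\|_1\le \|y\|_2^2/(2n\lambda_p)$, higher-moment bounds for $\hat\beta^{0}$, Cauchy--Schwarz, and the elementary facts that $\|X\beta\|_2^2=\|\beta\|_2^2 W$ with $W\sim\chi^2_n$ and $\|z\|_2^2\sim\sigma^2\chi^2_n$. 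Adding the two pieces gives $\limsup_n R_R(\Theta_k,\sigma)\big/\big(\tfrac{\sigma^2 k}{n}\log(p/k)\big)\le 2$.

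The main obstacle is the $G^c$ contribution, and its source is precisely the unboundedness of $\Theta_k$: a crude bound like $\|\hat\beta-\beta\|_2^2\lesssim \mathrm{poly}(n,p)(\|\beta\|_2^4+\sigma^4)$ on $G^c$ is fatal once it is only multiplied by a fixed power of $\epsilon_n$. What must be shown — and this is the delicate point — is that a \emph{gross} error of either $\hat\beta_{\mathrm{SLOPE}}$ or $\hat\beta^{0}$ forces the random design to conspire with the signal in a way whose probability decays in $\|\beta\|_2$, and decays fast enough to overwhelm the $\|\beta\|_2$-growth of the loss; this is the same mechanism that makes hard thresholding have finite worst-case risk over the $\ell_0$-ball in the orthogonal model. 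It is also why Theorem~\ref{thm:weijie}(ii) cannot simply be quoted: it only yields an $o(1)$ exceptional probability, whereas here one needs the tail of $\|\hat\beta-\beta\|_2^2$ controlled out to scale $\|\beta\|_2^2$, uniformly over $\Theta_k$. A secondary but genuine difficulty is keeping the constant $2$ exact throughout — in whatever version of the SLOPE analysis is used on $G$ and in the hand-off between $\hat\beta_{\mathrm{SLOPE}}$ and $\hat\beta^{0}$ — so that the safeguard is triggered only on $G^c$ up to negligible probability.
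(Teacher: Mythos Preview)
Your high-level architecture matches the paper's: the lower bound comes for free from Theorem~\ref{thm:weijie}(i) via Markov exactly as in \eqref{minimax:lower:form}, and the upper bound is obtained by a composite estimator that uses a sharp-constant estimator on a ``good'' event and a rate-optimal fallback elsewhere. The paper indeed constructs $\hat{\hat\beta}=\betaL\mathbbm{1}_{\mathcal A}+\betaM\mathbbm{1}_{\mathcal A^c}$, where $\mathcal A$ is an SRE-type event depending only on $X$, $\betaL$ is Lasso with the small tuning $\lambda_\varepsilon=(1+\varepsilon)\sigma\sqrt{2\log(p/k)/n}$, and $\betaM$ is the $\ell_0$-constrained least-squares estimator. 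So the overall plan is right.

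Where your proposal diverges, and where there is a real gap, is the mechanism you invoke to handle the unboundedness of $\Theta_k$ on the bad event. You suggest that a gross error of the estimator forces the design ``to conspire with the signal in a way whose probability decays in $\|\beta\|_2$''. That is not what the paper does, and it is not clear it can be made to work: for SLOPE or Verzelen's estimator there is no obvious reason the failure probability should improve as $\|\beta\|_2$ grows. The paper's resolution is much cleaner and sidesteps the issue entirely. The MLE obeys the basic inequality $\|X(\betaM-\beta)\|_2^2\le 2z^TX(\betaM-\beta)$, which yields
\[
\|\betaM-\beta\|_2 \;\le\; \frac{2}{\sqrt n\,V_{2k}}\sup_{u\in T_{2k}}\frac{z^TXu}{\sqrt n},
\]
a bound that is \emph{completely free of $\beta$}. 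Hence uniform (over $\Theta_k$) control of arbitrary moments $(\E\|\betaM-\beta\|_2^m)^{2/m}=O(\sigma^2 k\log(p/k)/n)$ is possible (Proposition~\ref{prop::MLE-bound}), and then H\"older gives $\E[\|\betaM-\beta\|_2^2\mathbbm{1}_{\mathcal A^c}]=o(\sigma^2 k\log(p/k)/n)$ with no $\|\beta\|_2$-dependence anywhere. Your plan does not supply an analogue of this $\beta$-free moment bound for your fallback $\hat\beta^0$, and your a priori bound $\|\hat\beta_{\mathrm{SLOPE}}\|_1\le\|y\|_2^2/(2n\lambda_p)$ scales with $\|\beta\|_2^2$, which is exactly the problem you flagged.

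A second, related gap: your switching rule is a ``sanity check'' arranged to pass on $G$, so $\{\text{check passes}\}\supseteq G$ (up to another high-probability event). On $\{\text{check passes}\}\setminus G$ you are still using SLOPE, and you have no moment control there. The paper avoids this by making the switching event $\mathcal A$ the partition itself (no separate check), and then proving a uniform $q$-th moment bound for Lasso \emph{restricted to $\mathcal A$} (Proposition~\ref{prop::lasso-MSE-bound}); this is where the refined analysis of \cite{bellec2018slope} under the smaller $\lambda_\varepsilon$ enters. The sharp constant $2$ is then obtained not directly from SLOPE/Lasso but via an oracle soft-thresholding estimator $\betaO=\eta_{\lambda_\varepsilon}(\beta+X^Tz/n)$: one shows $\sup_{\beta}\E\|\betaO-\beta\|_2^2\le(2+o(1))(1+\varepsilon)^2\sigma^2 k\log(p/k)/n$ and that $\|\betaL-\betaO\|_2\le\frac{\Delta}{1-\Delta}\|\betaO-\beta\|_2$ on a further high-probability event $\mathcal B_\Delta$, with the residual $\mathcal A\cap\mathcal B_\Delta^c$ handled by the Lasso moment bound. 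This nested decomposition, with $\beta$-free moment bounds at each level, is the key device your proposal is missing.
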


In order to establish Theorem \ref{thm::first-order-sparse-minimax}, we have used some of the techniques developed in \cite{bellec2018slope} and \cite{bellec2021second}. However, proving Theorem \ref{thm::first-order-sparse-minimax} is not a straightforward application of the results presented in these two papers for the following reasons: 

\begin{itemize}
\item For certain matrices, e.g. when a few columns of matrix $X$ are linearly dependent, the minimax risk will be infinite. Let's call such matrices ``infinite-risk matrices". Given our probabilistic assumption on matrix $X$, the probability of infinite-risk matrices is zero. However, the main concern in the minimax risk calculation is the matrices that are in the ``vicinity" of infinite-risk matrices. For such matrices, the minimax risk is very large but still finite, and the closer they are to the infinite-risk matrices the risk is expected to be larger. On the other hand, the likelihood of being closer to infinite-risk matrices tends to be lower. Hence, in order to establish the sharp minimax risk characterization, one has to obtain sharp bounds on the probability of the vicinity of infinite-risk matrices and on the minimax risk for such matrices. As will be clarified in Section \ref{roadmap:sec:part}, this requires a delicate analysis. 

\item A key part of our minimax risk calculations is to obtain a sharp upper bound in expectation, conditioning on the design matrix that satisfies a RE-type condition, for the Lasso estimator under a carefully chosen tuning parameter value. Towards this goal, we will refine the arguments in \cite{bellec2021second} (which have given constant-sharp results for prediction error) and \cite{bellec2018slope} (which have provided rate-optimal results for estimation error), to achieve the constant-sharp upper bound for estimation error. We provide more detailed discussions
in Section \ref{roadmap:sec:part}.

\end{itemize}

\section{Roadmap of the proof of Theorem \ref{thm::first-order-sparse-minimax}}
\label{roadmap:sec:part}

We first introduce some notations used in this section. We use $\mathbbm{1}_{\mathcal{A}}$ to represent the indicator function of the set $\mathcal{A}$. For a given vector $v = (v_{1}, \ldots, v_{p})\in \mathbb{R}^{p}$, $\norm{v}_{q} = \left(\sum_{i=1}^{p} |v_{i}|^{q}\right)^{1/q}$ for $q \in (0,\infty)$, and $\supp(v)=\{1\leq i\leq p: v_i\neq 0\}$ denotes its support. We use $\{e_j\}_{j=1}^p$ to denote the natural basis in $\mathbb{R}^p$. For a matrix $X\in \mathbb{R}^{n\times p}$, $X_j$ represents its $j$th column and $X_S\in \mathbb{R}^{n\times |S|}$ is the submatrix consisting of columns indexed by $S\subseteq \{1,2,\ldots, p\}$. For two non-zero real sequences $\{a_n\}_{n=1}^{\infty}$ and $\{b_n\}_{n=1}^{\infty}$, we use $a_n = o(b_n)$ to represent $|a_n/b_n| \rightarrow 0$ as $n \rightarrow \infty$, and use $a_n = O(b_n)$ for $\sup_n|a_n/b_n| < \infty$. For $a\in \mathbb{R}, a_+=\max(0, a)$.

Given the lower bound result we have obtained in \eqref{minimax:lower:form}, to show $R_R(\Theta_k, \sigma)=(2+o(1))\sigma^2k/n\log(p/k)$, it remains to prove the upper bound
\begin{align}
\label{main:goal:proof}
\limsup_{n\rightarrow \infty} \frac{R_R(\Theta_k, \sigma)}{\sigma^2k/n\log(p/k)}\leq 2.
\end{align}

To prove the upper bound of the minimax risk, the main idea is to construct a good estimator and obtain a sharp upper bound for its maximum risk. Towards this goal, consider the following two estimators:
\begin{itemize}
\item Lasso: 
\begin{equation}\label{eq::lasso-estimator0}
    \betaL \in \argmin_{b\in\R^{p}}\frac{1}{2n}\|y-Xb\|_{2}^{2} + \lambda \|b\|_{1}.
\end{equation}
\item Maximum likelihood estimator (MLE):
\begin{equation}\label{eq::MLE0}
    \betaM \in \argmin_{b\in \Theta_k} \|y-Xb\|_{2}^{2}.
\end{equation}
\end{itemize}
Both estimators are known to achieve the minimax optimal rate (in probability or in expectation). To obtain constant-sharp upper bounds, we construct an aggregated estimator that combines Lasso and MLE, taking the form
\begin{equation}\label{eq::beta-double-hat-estimator}
    \hat{\hat{\beta}} := \betaL \mathbbm{1}_{\mathcal{A}} + \betaM \mathbbm{1}_{\mathcal{A}^{c}}.
\end{equation}
Here, the event $\mathcal{A} :=\mathcal{A}(\delta_{0}, c_0,k)$ is defined as
\begin{equation}\label{eq::event-A}
    \mathcal{A}(\delta_{0}, c_0,k) := \bigg\{ X\in\R^{n\times p}: ~\max_{j=1,\ldots, p} \|X_{j}\|_{2} \leq (1+\delta_{0})\sqrt{n}, ~ \theta(k,c_0) \geq 1-\delta_{0} \bigg\},
\end{equation}
where $\theta(k,c_0)$ was introduced in \eqref{sre:condef} and $\delta_0,c_0>0$ are constants that will be specified shortly. When the design matrix $X$ is ``well conditioned" (in the sense of $\mathcal{A}$, holding with high probability), our estimator $\hat{\hat{\beta}}$ uses Lasso which will be shown to attain the sharp constant. Otherwise, our estimator resorts to the MLE that only induces a negligible error on rare events. The Lasso estimator depends on the tuning parameter $\lambda>0$, and our choice of $\lambda$ in the proof will be 
\begin{align}
\label{lambda:set:up}
\lambda_{\varepsilon} := (1+\varepsilon)\sigma\sqrt{\frac{2\log(p/k)}{n}},
\end{align}
where $\varepsilon>0$ is an arbitrarily small constant. This choice of tuning parameter value is consistent with the existing works that have obtained sharp high-probability upper bounds (e.g. \cite{ndaoud2020scaled, bellec2018noise, bellec2019first}). For the given tuning parameter $\lambda_{\varepsilon}$ in \eqref{lambda:set:up}, we set the corresponding constants $\delta_0,c_0$ of \eqref{eq::event-A}:
\begin{align}
\label{cons:value}
\delta_0=\Big(1+\frac{\varepsilon}{2}\Big)^{\frac{1}{3}}-1, ~~c_0=8\sqrt{2}\varepsilon^{-1}\Big(1+\frac{\varepsilon}{2}\Big)^{\frac{2}{3}}+2\varepsilon^{-1}+2.
\end{align}

Our goal is to prove that for any fixed $\varepsilon\in (0,1)$, our estimator $\hat{\hat{\beta}}$ constructed in \eqref{eq::beta-double-hat-estimator} satisfies 
\begin{align}
\label{goal:est}
\sup_{\beta \in \Theta_k} \E \| \hat{\hat{\beta}} - \beta\|_2^2 \leq \frac{(2+o(1))f(\varepsilon)\sigma^2 k \log (p/k)}{n},
\end{align}
where $f(\varepsilon)>0$ and $f(\varepsilon)\rightarrow 1$ as $\varepsilon \rightarrow 0+$.
The above result implies the upper bound for the minimax risk,
\[
\limsup_{n\rightarrow \infty} \frac{R_R(\Theta_k, \sigma)}{\sigma^2k/n\log(p/k)}\leq 2f(\varepsilon), \quad \forall \varepsilon \in (0,1). 
\]
Further letting $\varepsilon \rightarrow 0+$ yields \eqref{main:goal:proof}, and this will finish the proof.

To prove \eqref{goal:est}, from the construction of $\betahh$ we have
\begin{eqnarray*} \label{eq::lasso-MLE-decomposition}
    \E\|\hat{\hat{\beta}} - \beta\|^{2}_2= \E \left(\|\betaL - \beta\|^{2}_2 \mathbbm{1}_{\mathcal{A}} \right) + \E \left( \|\betaM - \beta\|_2^{2} \mathbbm{1}_{\mathcal{A}^{c}} \right),
\end{eqnarray*}
and we aim to show 
\begin{align}
&\sup_{\beta \in \Theta_k} \E \left(\|\betaM - \beta\|_2^{2}\mathbbm{1}_{\mathcal{A}^{c}}\right) = o\Big(\sigma^2k/n\log (p/k)\Big), \label{mle:upper:key} \\
&\sup_{\beta \in \Theta_k} \E \left(\|\betaL - \beta\|_2^{2} \mathbbm{1}_{\mathcal{A}}\right) \leq (2+o(1))f(\varepsilon)\sigma^2k/n\log (p/k). \label{lasso:upper:key}
\end{align}

\subsection{Proof of \eqref{mle:upper:key}}

First note that from H\"{o}lder's inequality we have
\begin{eqnarray}\label{eq::cauchy-schwarz-expansion-for-MLE-bound}
\E \left(\|\betaM - \beta\|_2^{2}\mathbbm{1}_{\mathcal{A}^{c}}\right) \leq  \Big( \E \|\betaM - \beta\|_{2}^{m} \Big)^{\frac{2}{m}} \cdot \big(\mathbb{P}(\mathcal{A}^c)\big)^{\frac{m-2}{m}}, \quad m\in (2,\infty). 
\end{eqnarray}
Hence, we will prove that $ \sup_{\beta \in \Theta_k}\Big( \E \|\betaM - \beta\|_{2}^{m} \Big)^{\frac{2}{m}}=O(\sigma^2 k/n\log (p/k))$ and that  $\mathbb{P}(\mathcal{A}^c)= o(1)$. Let us start with the simpler one, i.e. $\mathbb{P}(\mathcal{A}^c)= o(1)$. The following lemma proves this claim.

\begin{lemma}\label{lem::gaussian-matrix-satisfy-SRE}
    Assume the design matrix $X\in\R^{n\times p}$ has i.i.d. $\calN(0, 1)$ entries. For any constants $c_{0}>0, \delta_0\in (0,1)$ and $k\in\{1,\ldots, p\}$, there exist absolute constants $C, C'>0$ such that if
    
    \begin{equation}\label{eq::large-n-SRE-condition}
        n\geq C\delta_0^{-2}(3+c_0)^2k\log (ep/k),
    \end{equation}
    then with probability at least $1-2\exp\big( -C'k\log (ep/k) \big)$ we have
    \begin{equation}\label{eq::random-matrix-SRE-condition}
        \max_{j=1,\ldots, p} \|X_{j}\|_2\leq (1+\delta_{0})\sqrt{n}, \quad \theta(k,c_0) \geq 1-\delta_{0}.
    \end{equation}
\end{lemma}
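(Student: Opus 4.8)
The plan is to verify the two conclusions in \eqref{eq::random-matrix-SRE-condition} on two separate high-probability events and then intersect them. For the column-norm bound, note that $\|X_j\|_2^2 \sim \chi^2_n$ for every $j$, so the standard one-sided $\chi^2$ deviation inequality (Laurent--Massart) gives $\mathbb{P}\big(\|X_j\|_2^2 \geq (1+\delta_0)^2 n\big) \leq \exp(-c_1 n\delta_0^2)$ for an absolute constant $c_1>0$, using $(1+\delta_0)^2 - 1 \geq 2\delta_0$. A union bound over $j = 1,\ldots,p$ bounds the failure probability of the first event in \eqref{eq::random-matrix-SRE-condition} by $p\exp(-c_1 n\delta_0^2)$, and since \eqref{eq::large-n-SRE-condition} forces $n\delta_0^2 \geq C(2+c_0)^2 k\log p \geq C\log p$, this is at most $\exp(-C' k\log p)$ once $C$ is large enough.

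For the restricted-eigenvalue bound, set $T := \mathcal{C}_{SRE}(k,c_0)\cap S^{p-1}$; by $1$-homogeneity of $\delta\mapsto \|X\delta\|_2/\|\delta\|_2$ it suffices to show $\inf_{\delta\in T}\|X\delta\|_2 \geq (1-\delta_0)\sqrt{n}$. The key structural fact is that every $\delta\in T$ satisfies $\|\delta\|_1 \leq (1+c_0)\sqrt{k}$, so, writing $g\sim\mathcal{N}(0,I_p)$, the Gaussian width of $T$ obeys
\[
w(T) := \E\,\sup_{\delta\in T}\langle g,\delta\rangle \;\leq\; (1+c_0)\sqrt{k}\,\E\|g\|_\infty \;\leq\; (1+c_0)\sqrt{2k\log(2p)} \;\leq\; 2(1+c_0)\sqrt{k\log p}
\]
for $p\geq 2$. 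I would then invoke Gordon's Gaussian min-max (``escape through the mesh'') inequality together with Gaussian concentration of the map $X\mapsto \inf_{\delta\in T}\|X\delta\|_2$, which is $1$-Lipschitz in the Frobenius norm since $\|\delta\|_2\le 1$ on $T$: there is an absolute $C_1>0$ such that, with probability at least $1-\exp(-t^2/2)$,
\[
\inf_{\delta\in T}\|X\delta\|_2 \;\geq\; \sqrt{n} - C_1\big(w(T) + t\big) \;\geq\; \sqrt{n} - C_1\big(2(1+c_0)\sqrt{k\log p} + t\big).
\]
Taking $t = (1+c_0)\sqrt{k\log p}$ gives, on an event of probability at least $1-\exp(-C''k\log p)$, the bound $\inf_{\delta\in T}\|X\delta\|_2 \geq \sqrt{n} - C_2(2+c_0)\sqrt{k\log p}$; and \eqref{eq::large-n-SRE-condition} with $C \geq C_2^2$ forces $C_2(2+c_0)\sqrt{k\log p}\leq \delta_0\sqrt{n}$, i.e. $\theta(k,c_0)\geq 1-\delta_0$. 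Intersecting with the event of the previous paragraph and relabeling absolute constants produces the claimed probability $1-2\exp(-C'k\log p)$.

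The main obstacle is this second part: one needs a \emph{uniform} lower bound on the empirical process $\delta\mapsto\|X\delta\|_2$ over the cone that is simultaneously sharp enough in its dependence on $k,c_0,\delta_0$ to reproduce the scaling $n\gtrsim \delta_0^{-2}(2+c_0)^2 k\log p$, and equipped with an exponential tail at scale $\sqrt{k\log p}$ — a mere in-expectation control or a Markov-type tail would be too weak to yield the target probability $1-2\exp(-C'k\log p)$. An alternative to the Gaussian-width route, if a self-contained argument is preferred, is to first establish a restricted isometry property for $m$-sparse vectors with $m\asymp (1+c_0)^2 k$ via an $\varepsilon$-net estimate union-bounded over the $\binom{p}{m}$ coordinate subspaces, and then transfer it to $\mathcal{C}_{SRE}(k,c_0)$ by the standard bucketing/shelling argument; this reproduces the same sample-size condition and the same exponential probability.
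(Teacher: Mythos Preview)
Your proof is correct and uses the same Gaussian-width machinery as the paper, but packaged slightly differently. The paper applies a single two-sided matrix deviation inequality (Vershynin's Exercise~9.1.8) to the set $T=\{\delta:\|\delta\|_1\le(1+c_0)\sqrt{k},\ \|\delta\|_2=1\}$, obtaining $|\|X\delta\|_2-\sqrt{n}|\le C_1\big((1+c_0)\sqrt{k}\,\E\|h\|_\infty+u\big)$ uniformly over $T$ with probability $1-2e^{-u^2}$; setting $u\asymp\sqrt{k\log p}$ gives both the lower bound $\theta(k,c_0)\ge 1-\delta_0$ \emph{and} the column-norm bound in one stroke, because each standard basis vector $e_j$ lies in $T$ and hence $\|X_j\|_2=\|Xe_j\|_2\le(1+\delta_0)\sqrt{n}$ follows from the upper half of the deviation. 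By contrast you treat the two conclusions separately---chi-square tails plus a union bound for the columns, and the one-sided Gordon/Lipschitz-concentration argument for the cone---and then intersect. Both routes yield the same sample-size and probability scaling; the paper's version is a bit more economical, while yours is more elementary in that it avoids quoting the full two-sided uniform deviation result. Your alternative via RIP plus shelling is also a valid (and classical) path.
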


\begin{proof}
Similar results exist in the literature of high-dimensional statistics. For completeness, we provide a proof based on uniform matrix deviation inequality from Lemma \ref{matrix:dev}. Define the set
\[
T:=\Big\{\delta\in\mathbb{R}^p: \|\delta\|_1\leq (1+c_0)\sqrt{k}, \|\delta\|_2=1\Big\},
\]
and apply Lemma \ref{matrix:dev} to obtain: $\forall u>0$, with probability at least $1-2e^{-u^2}$ it holds that

\begin{align}
\label{start:eq:one}
\sup_{\delta\in T}\Big|\|X\delta\|_2-\sqrt{n}\Big|&\leq C_1\Big(\mathbb{E}\sup_{\delta\in T}|\delta^Th|+u\Big), \quad h\sim \mathcal{N}(0,I_p), 
\end{align}
where $C_1 > 0$ is an absolute constant. To obtain a sharp bound for the expectation above, define
\[
R=\Big\{\delta\in \mathbb{R}^p: \|\delta\|_0\leq k, \|\delta\|_2\leq 1 \Big\},
\]
and denote its convex hull by ${\rm conv}(R)$. We now show that $T\subseteq (2+c_0){\rm conv}(R)$. For any $\delta\in T$, let $G_1$ index the $k$ largest  (in absolute value) non-zero coefficients of $\delta$, $G_2$ index the next $k$ largest non-zero coefficients, and so on, leading to $\{1\leq j\leq p:\delta_j\neq 0\}=\cup_{j=1}^mG_j$. Further define
\[
\delta^{(j)}_i=\delta_i \mathbbm{1}_{i\in G_j}, ~~ i=1,\ldots, p, j=1,\ldots, m.
\]
It is clear that $\{\mathbf{0}\}\cup \{\delta^{(j)}/\|\delta^{(j)}\|_2\}_{j=1}^m\subseteq R$. Consider the following decomposition:
\[
\delta=(2+c_0)\cdot \Bigg(\frac{2+c_0-\sum_{j=1}^m\|\delta^{(j)}\|_2}{2+c_0}\cdot \mathbf{0}+\sum_{j=1}^m \frac{\|\delta^{(j)}\|_2}{2+c_0}\cdot \frac{\delta^{(j)}}{\|\delta^{(j)}\|_2}\Bigg).
\]
Then $T\subseteq (2+c_0){\rm conv}(R)$ holds whenever $\sum_{j=1}^m\|\delta^{(j)}\|_2\leq 2+c_0$. This inequality can be obtained as follows:
\begin{align*}
\sum_{j=1}^m\|\delta^{(j)}\|_2 &\leq 1+ \sum_{j=2}^m\|\delta^{(j)}\|_2 \leq 1+\sum_{j=1}^{m-1}\sqrt{k}\cdot \frac{\|\delta^{(j)}\|_1}{k} \\
&\leq 1+\frac{1}{\sqrt{k}}\|\delta\|_1 \leq 2+c_0,
\end{align*}
where the second inequality holds because by definition every non-zero component of $\delta^{(j)}$ is no larger in magnitude than the average non-zero component of $\delta^{(j-1)}$; the first and last inequalities are due to the simple fact that  $\delta\in T$. As a result, we can proceed to obtain
\begin{align}
\label{start:eq:two}
\mathbb{E}\sup_{\delta\in T}|\delta^Th|&=\mathbb{E}\sup_{\delta\in T}\delta^Th\leq \mathbb{E}\sup_{\delta\in (2+c_0){\rm conv}(R)}\delta^Th \nonumber \\
&=(2+c_0) \cdot \mathbb{E}\sup_{\delta\in {\rm conv}(R)}\delta^Th=(2+c_0) \cdot \mathbb{E}\sup_{\delta\in R}\delta^Th \nonumber \\
&\leq (2+c_0)C_2 \sqrt{k\log(ep/k)},
\end{align}
where $C_2>0$ is an absolute constant, and the last inequality can be found from Exercise 5.7 in \cite{wainwright2019high}. Combining \eqref{start:eq:one} and \eqref{start:eq:two} yields that with probability at least $1-2e^{-u^2}$,
\begin{align*}
\sup_{\delta\in T}\Big|\|X\delta\|_2-\sqrt{n}\Big|\leq C_1 \Big[(2+c_0)C_2\sqrt{k\log(ep/k)}+u\Big]
\end{align*}
Choosing $u=C_2\sqrt{k\log(ep/k)}$ gives
\[
\mathbb{P}\Big(1-\delta_0\leq \|X\delta\|_2/\sqrt{n}\leq 1+\delta_0,~\forall \delta\in T\Big)\geq 1-2e^{-C_2^2k\log(ep/k)},
\]
as long as $n\geq \delta_0^{-2}C_1^2C_2^2(3+c_0)^2k\log(ep/k)$.
 The proof is thus completed because 
\begin{align*}
\theta(k,c_0)=\inf_{\delta\in T}\|X\delta\|_2/\sqrt{n}, \quad \sup_{1\leq j\leq p}\|Xe_j\|_2/\sqrt{n}\leq \sup_{\delta\in T}\|X\delta\|_2/\sqrt{n}
\end{align*}
\end{proof}

As is clear from Lemma \ref{lem::gaussian-matrix-satisfy-SRE}, under the conditions $k/p\rightarrow 0, (k\log (p/k))/n\rightarrow 0$, the event $\mathcal{A}=\mathcal{A}(\delta_{0}, c_0,k)$ with constants $\delta_0,c_0$ in \eqref{cons:value} satisfies 

\[
\mathbb{P}(\mathcal{A}^c)\leq 2e^{-C'k\log (ep/k)}=o(1).
\]

The next step is to obtain an upper bound for $\Big( \E \|\betaM - \beta\|_{2}^{m} \Big)^{\frac{2}{m}}$. This is done in the next proposition. 

\begin{proposition}\label{prop::MLE-bound}
    Assume model \eqref{model::gaussian-model} with isotropic Gaussian design. Suppose $k/p\rightarrow 0$ and $\frac{k\log(p/k)}{n} \rightarrow 0$. Then,
    \begin{equation*}
        \sup_{\beta \in \Theta_k}\Big( \E \|\hat{\beta}^M-\beta\|_{2}^{m} \Big)^{\frac{2}{m}}=O(\sigma^2 k/n \log (p/k)), \quad \forall m \in [1,\infty).
    \end{equation*}
\end{proposition}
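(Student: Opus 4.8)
The plan is to bound the $m$-th moment of the MLE error $\|\betaM - \beta\|_2$ uniformly over $\Theta_k$. The key observation is that $\betaM$ is defined via a combinatorial least-squares problem: $\betaM \in \argmin_{b \in \Theta_k}\|y - Xb\|_2^2$, so $\supp(\betaM)$ ranges over $\binom{p}{k}$ possible subsets. I would first reduce to a bound that holds conditionally on $X$ and $z$ on a "good" event, and then handle the complementary "bad" event by a crude deterministic or polynomial-growth bound combined with a tail estimate, so that its contribution to the $m$-th moment is negligible. Concretely, write $\E\|\betaM - \beta\|_2^m = \E(\|\betaM - \beta\|_2^m \mathbbm{1}_{\mathcal{G}}) + \E(\|\betaM - \beta\|_2^m \mathbbm{1}_{\mathcal{G}^c})$ for a suitable event $\mathcal{G}$ on which all relevant $2k$-subset singular values of $X$ are well-conditioned (e.g. restricted isometry / sparse eigenvalue bounds on all $\binom{p}{2k}$ submatrices, which hold with probability $1 - e^{-c k\log(p/k)}$ by a union bound over subsets combined with standard Gaussian singular value concentration, since $k\log(p/k)/n \to 0$).

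On the good event $\mathcal{G}$: since $\betaM$ is the least-squares minimizer over $\Theta_k$, comparing against the true $\beta \in \Theta_k$ gives the basic inequality $\|y - X\betaM\|_2^2 \le \|y - X\beta\|_2^2$, which upon expanding $y = X\beta + z$ yields $\|X(\betaM - \beta)\|_2^2 \le 2 z^T X(\betaM - \beta)$. The difference $\betaM - \beta$ is $2k$-sparse, so the left side is at least $n\bar\theta_{2k}\|\betaM-\beta\|_2^2$ (using the lower sparse-eigenvalue bound on $\mathcal{G}$, with $\bar\theta_{2k}$ bounded away from zero on $\mathcal{G}$), and the right side is at most $2\|\betaM - \beta\|_2 \cdot \max_{|S|=2k}\|P_S z\|_2$ where $P_S$ is the projection onto the column span of $X_S$. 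Cancelling one factor of $\|\betaM - \beta\|_2$ gives $\|\betaM - \beta\|_2 \lesssim \max_{|S|=2k}\|P_S z\|_2 / n$. Now $\max_{|S|=2k}\|P_S z\|_2^2$ is a maximum over $\binom{p}{2k}$ chi-squared random variables each with $2k$ degrees of freedom (conditionally on $X$, and independent of $X$); a standard chi-square tail bound plus a union bound gives $\|P_S z\|_2^2 \le C\sigma^2 k\log(ep/k)$ for all $S$ with probability $1 - e^{-c k\log(p/k)}$, and moreover one can integrate the tail to control all moments: $\E[\max_{|S|=2k}\|P_S z\|_2^{2m}] \le (C_m \sigma^2 k \log(ep/k))^m$ for every fixed $m$. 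Combining, on $\mathcal{G}$ we get $\E(\|\betaM-\beta\|_2^m \mathbbm{1}_{\mathcal{G}}) \le (C_m' \sigma^2 k\log(p/k)/n)^{m/2}$ after noting $\log(ep/k) \asymp \log(p/k)$ when $p/k \to \infty$, which is exactly what raising to the power $2/m$ requires.

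On the bad event $\mathcal{G}^c$: here the sparse eigenvalue could be small or zero, so $\|\betaM - \beta\|_2$ is not bounded by a nice quantity — but $\mathbb{P}(\mathcal{G}^c)$ is exponentially small. I would use Hölder again: $\E(\|\betaM-\beta\|_2^m\mathbbm{1}_{\mathcal{G}^c}) \le (\E\|\betaM-\beta\|_2^{2m})^{1/2}(\mathbb{P}(\mathcal{G}^c))^{1/2}$, so it suffices to show $\E\|\betaM - \beta\|_2^{2m}$ grows at most polynomially (or subexponentially) in $n,p$. This follows because $\|\betaM\|_2$ can be bounded crudely: on the full probability space, $\|X\betaM - y\|_2 \le \|X\beta - y\|_2 = \|z\|_2$, so $\|X\betaM\|_2 \le \|X\beta\|_2 + 2\|z\|_2$, and $\betaM$ is $k$-sparse, so $\|\betaM\|_2 \le (\min_{|S|=k}\sigma_{\min}(X_S))^{-1}(\|X\beta\|_2 + 2\|z\|_2)$; the smallest singular value of a fixed $n\times k$ Gaussian submatrix has an inverse-polynomial density near zero, and by a union bound over $\binom{p}{k}$ subsets together with $k\log p / n \to 0$ one controls $\E[(\min_{|S|=k}\sigma_{\min}(X_S))^{-2m}]$ by a polynomial in $p$; all other factors have all moments bounded by polynomials. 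Multiplying by $(\mathbb{P}(\mathcal{G}^c))^{1/2} = e^{-ck\log(p/k)/2}$, which beats any polynomial, kills this term entirely — it is $o(\sigma^2 k\log(p/k)/n)$ after taking the $2/m$ power.

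The main obstacle I anticipate is the bad-event analysis: bounding the moments of the inverse smallest singular value $\min_{|S|=k}\sigma_{\min}(X_S)^{-1}$ over exponentially many subsets is the delicate part, because a single near-degenerate submatrix can make $\|\betaM\|_2$ enormous. One needs the density of $\sigma_{\min}(X_S)$ near $0$ to be controlled well enough that, after the union bound over $\binom{p}{k}$ subsets, the resulting moment is only polynomially large — which is precisely the "vicinity of infinite-risk matrices" issue flagged in Section~\ref{roadmap:sec:part}. The regime assumption $(k\log p)/n \to 0$ is what makes this work: it forces $n$ to be large enough relative to $k$ that a $k$-column Gaussian submatrix is very well-conditioned with overwhelming probability, and the exponentially small $\mathbb{P}(\mathcal{G}^c)$ absorbs the polynomial blow-up. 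The good-event part is comparatively routine: the basic inequality for constrained least squares plus chi-square maxima is standard, and the only care needed is tracking that the constants depend on $m$ but not on $n,p,k$.
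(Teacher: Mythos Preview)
Your good-event analysis is essentially the paper's: the basic inequality $\|X(\betaM-\beta)\|_2^2 \le 2z^TX(\betaM-\beta)$ together with a lower sparse-eigenvalue bound and a Gaussian-complexity / chi-square maxima estimate gives the right rate. (Minor slip: after cancelling one factor the bound should scale like $1/\sqrt{n}$, not $1/n$; this is harmless.)

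The bad-event part has a genuine gap. Your crude bound there is
\[
\|\betaM\|_2 \le \Big(\min_{|S|=k}\sigma_{\min}(X_S)\Big)^{-1}\big(\|X\beta\|_2 + 2\|z\|_2\big),
\]
and you then want $\E\|\betaM-\beta\|_2^{2m}$ to be polynomially bounded in $n,p$. But both $\|X\beta\|_2$ and the additive $\|\beta\|_2$ (needed to pass from $\|\betaM\|_2$ to $\|\betaM-\beta\|_2$) scale with $\|\beta\|_2$, and $\Theta_k$ carries \emph{no} norm constraint. So $\sup_{\beta\in\Theta_k}\E\|\betaM-\beta\|_2^{2m}=\infty$, and the H\"older step $\E(\|\betaM-\beta\|_2^m\mathbbm{1}_{\mathcal{G}^c}) \le (\E\|\betaM-\beta\|_2^{2m})^{1/2}(\mathbb{P}(\mathcal{G}^c))^{1/2}$ yields nothing uniform in $\beta$.

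The paper sidesteps this by never splitting into good/bad events. The basic inequality holds \emph{deterministically}, and from it one gets the pointwise bound
\[
\|\betaM-\beta\|_2 \le \frac{2}{\sqrt{n}\,V_{2k}}\cdot \sup_{u\in T_{2k}}\frac{z^TXu}{\sqrt{n}},
\qquad V_{2k}=\inf_{\|\Delta\|_2=1,\,\|\Delta\|_0\le 2k}\frac{1}{n}\|X\Delta\|_2^2,
\]
which is uniform in $\beta$ because the right-hand side does not involve $\beta$ at all. A single Cauchy--Schwarz then decouples the two random factors, and the work is pushed into two separate moment bounds: $\E[V_{2k}^{-r}]=O(1)$ (Lemma~\ref{lem::inverse-min-eigenvalue}) and $\E\big[(\sup_{u\in T_{2k}}z^TXu/\sqrt{n})^q\big]\le c_q(\sigma\sqrt{k\log(ep/k)})^q$ (Lemma~\ref{lem::gaussian-complexity}). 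The first of these is precisely the ``density of $\sigma_{\min}$ near zero plus union bound over $\binom{p}{2k}$ subsets'' calculation you flagged as the main obstacle---and once you carry it out (even in the weaker polynomial form you proposed), the good/bad decomposition buys nothing, because the same deterministic bound that works on $\mathcal{G}$ works on $\mathcal{G}^c$ too. In short: keep your basic-inequality route, but apply it on the whole space and control $\E[V_{2k}^{-r}]$ directly; drop the event split and the crude $\|\betaM\|_2$ bound entirely.
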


The proof of this proposition is long, hence is deferred to Section \ref{prop::MLE-bound:proof}.

\subsection{Proof of \eqref{lasso:upper:key}}
\label{proof:lasso:sharp}
In establishing this bound, the main challenge is that we would like to obtain an upper bound with the sharp constant $2$. We adapt the proof strategy in \cite{bellec2021second} and \cite{bellec2018slope} to first show that the estimation error of Lasso $\betaL$ can be accurately approximated by the error of the soft thresholding estimator under a sequence model, as stated in Proposition \ref{prop:lasso:denoiser}. Then we can calculate the estimation error of the latter estimator with the desirable sharp constant, in a relatively straightforward way. 
\textcolor{black}{
\begin{proposition}
\label{prop:lasso:denoiser}
Consider model \eqref{model::gaussian-model} with isotropic Gaussian design. Recall $\mathcal{A}=\mathcal{A}(\delta_{0}, c_0,k)$ in \eqref{eq::event-A} with constants $\delta_0,c_0$ from \eqref{cons:value}. Then, there exist constants $C_{\varepsilon}, c_{\varepsilon}>0$ only depending on $\varepsilon$ such that as long as $p/k>c_{\varepsilon}$, the Lasso estimator $\betaL$ in \eqref{eq::lasso-estimator0} with tuning parameter $\lambda_{\varepsilon}$ satisfies
\begin{align}
\label{approx:denoiser:risk}
&\sup_{\beta \in \Theta_k} \E \left(\|\betaL - \beta\|_2^{2} \mathbbm{1}_{\mathcal{A}}\right) \nonumber \\
\leq &~\frac{1}{(1-\delta_0)^4} \underbrace{\sup_{|S|\leq k}\Bigg\{\sum_{j\in S}\mathbb{E}(g_j-\lambda_{\varepsilon})^2\mathbbm{1}_{\mathcal{A}}+\sum_{j\in S^c}\mathbb{E}(|g_j|-\lambda_{\varepsilon})_+^2\mathbbm{1}_{\mathcal{A}}\Bigg\}}_{Q(\Theta_k)}\nonumber \\
&~+C_{\varepsilon} \frac{\sigma^2}{nk\log(p/k)},
\end{align}
where $g=\frac{1}{n}X^Tz$ and $S={\rm supp}(\beta)$.
\end{proposition}
}
\textcolor{black}{
The proof of this proposition is deferred to Section \ref{ssec:proof:prop5}. }

\textcolor{black}{
As is clear, the second term in the upper bound \eqref{approx:denoiser:risk} is negligible compared to the order $(k\log(p/k))/n$. The constant $\frac{1}{(1-\delta_0)^4}$ from the first term tends to one as $\varepsilon \rightarrow 0+$, so we expect the term $Q(\Theta_k)$ to give the correct order with the sharp constant 2. Before calculating $Q(\Theta_k)$, we recognize that $Q(\Theta_k)$ can be in fact viewed as the risk of the soft thresholding estimator under a sequence model. Specifically, consider the sparse sequence model: $\tilde{y}_j=\beta_j+g_j, j=1,\ldots, p$, and the soft thresholding estimator 
\[
\tilde{\beta}=\argmin_{\beta\in \mathbb{R}^p}\frac{1}{2}\sum_{j=1}^p(\tilde{y}_j-\beta_j)^2+\lambda_{\varepsilon}\sum_{j=1}^p|\beta_j| \Rightarrow  \tilde{\beta}_j={\rm sign}(\tilde{y}_j)(|\tilde{y}_j|-\lambda_{\varepsilon})_+.
\]
Then, conditioning on $X\in \mathcal{A}$, we obtain
\begin{align*}
\sup_{\beta\in \Theta_k}\mathbb{E}\|\tilde{\beta}-\beta\|_2^2 &=\sup_{|S|\leq k} \Bigg\{\sum_{j\in S}\lim_{\beta_j\rightarrow \infty}\mathbb{E}(\tilde{\beta}_j-\beta_j)^2+\sum_{j\in S^c}\mathbb{E}|\tilde{\beta}_j|^2\Bigg\}\\
&=\sup_{|S|\leq k} \Bigg\{\sum_{j\in S}\mathbb{E}(g_j-\lambda_{\varepsilon})^2+\sum_{j\in S^c}\mathbb{E}(|g_j|-\lambda_{\varepsilon})_+^2\Bigg\},
\end{align*}
where the first equality holds since the risk $\mathbb{E}(\tilde{\beta}_j-\beta_j)^2$ increases as $|\beta_j|$ increases. Therefore, $Q(\Theta_k)$, if conditioning on $X\in \mathcal{A}$, can be interpreted as the supremum risk of $\tilde{\beta}$.}

\textcolor{black}{
We now calculate $Q(\Theta_k)$ in detail. Note that $g_j|X \sim \mathcal{N}(0, \sigma^2n^{-2}\|X_j\|_2^2)$. We first calculate the conditional expectation with respect to $z$ and then calculate the expectation with respect to $X$, yielding
\begin{align}
\label{sharp:constant:compute}
&~Q(\Theta_k)\nonumber \\
\leq &~\sup_{|S|\leq k}\Bigg\{\sum_{j\in S}\mathbb{E}(\lambda_{\varepsilon}^2+\sigma^2n^{-2}\|X_j\|_2^2)\mathbbm{1}_{\mathcal{A}} \nonumber \\
&~+\sum_{j\in S^c}\mathbb{E}\sigma^2n^{-2}\|X_j\|_2^2\exp\bigg(\frac{-n^2\lambda_{\varepsilon}^2}{2\sigma^2\|X_j\|_2^2}\bigg)\mathbbm{1}_{\mathcal{A}}\Bigg\} \nonumber \\
\leq&~ k\Big(\lambda_{\varepsilon}^2+\frac{(1+\delta_0)^2\sigma^2}{n}\Big)+p\frac{(1+\delta_0)^2\sigma^2}{n}\exp\bigg(\frac{-n\lambda_{\varepsilon}^2}{2(1+\delta_0)^2\sigma^2}\bigg) \nonumber \\
=&\frac{2(1+\varepsilon)^2\sigma^2k\log(p/k)}{n}+\frac{k(1+\delta_0)^2\sigma^2}{n}+\frac{(1+\delta_0)^2\sigma^2p}{n}\cdot \Big(\frac{k}{p}\Big)^{\frac{(1+\varepsilon)^2}{(1+\varepsilon/2)^\frac{2}{3}}}.
\end{align}
Here, the first inequality has used Lemma G.1 in \cite{bellec2021second}, the second inequality holds since $\|X_j\|_2\leq (1+\delta_0)\sqrt{n}$ on the event $\mathcal{A}$, and the last equality used the choice of $\delta_0$ in \eqref{cons:value}. Finally, under the scaling $p/k\rightarrow \infty$, it is direct to verify that \eqref{approx:denoiser:risk}-\eqref{sharp:constant:compute} together lead to
\begin{align*}
\sup_{\beta \in \Theta_k} \E \left(\|\betaL - \beta\|_2^{2} \mathbbm{1}_{\mathcal{A}}\right) \leq \frac{(1+\varepsilon)^2}{[2-(1+\frac{\varepsilon}{2})^{\frac{1}{3}}]^4}\cdot \frac{(2+o(1))\sigma^2k\log(p/k)}{n}.
\end{align*}
This proves \eqref{lasso:upper:key}.}

\section{Conclusion}

 In this paper, we study the minimax risk of the sparse linear regression problem. Despite the considerable volume of research dedicated to this area, as discussed in the paper, the constant-sharp analysis of minimax risk is still rather limited in the literature. To contribute along this line, we explored the asymptotic scenario where $(k \log(p/k))/n \rightarrow 0$ and derived the sharp asymptotic minimax risk $2\sigma^2 k/n\log(p/k)$ under isotropic Gaussian design. Along the way, we provided a summary of existing literature results and highlighted some of the fundamental issues that have remained unresolved.

\appendix

\begin{center}
{\LARGE\bf Proofs of technical results}
\end{center}

We collect the notations used throughout the proof sections for convenience. For an integer $n$, let $[n]=\{1,2,\ldots, n\}$. We use $\mathbbm{1}_{\mathcal{A}}$ to represent the indicator function of the set $\mathcal{A}$. For a given vector $v = (v_{1}, \ldots, v_{p})\in \mathbb{R}^{p}$, $\norm{v}_{0} = \# \{i: v_{i}\neq 0\}$, $\norm{v}_{\infty} = \max_{i}|v_{i}|$, $\norm{v}_{q} = \left(\sum_{i=1}^{p} |v_{i}|^{q}\right)^{1/q}$ for $q \in (0,\infty)$, $\supp(v)=\{1\leq i\leq p: v_i\neq 0\}$ denotes its support, and $v_S\in \mathbb{R}^{|S|}$ denotes the subvector consisting of coordinates in $S\subseteq [p]$. The inner product of two vectors $a,b$ is written as $\langle a, b\rangle$. We use $\{e_j\}_{j=1}^p$ to denote the natural basis in $\mathbb{R}^p$. For a matrix $X\in \mathbb{R}^{n\times p}$, $\sigma_{\min}(X)$ denotes its smallest singular value and $\sigma_{\max}(X)$ (or $\|X\|_2$) denotes its largest singular value; $X_j$ represents its $j$th column and $X_S\in \mathbb{R}^{n\times |S|}$ is the submatrix consisting of columns indexed by $S\subseteq [p]$. The $p\times p$ identity matrix is denoted by $I_p$. For two real numbers $a$ and $b$, $a \vee b$ and $a \wedge b$ represent $\max(a, b)$ and $\min(a, b)$, respectively. For two non-zero real sequences $\{a_n\}_{n=1}^{\infty}$ and $\{b_n\}_{n=1}^{\infty}$, we use $a_n = o(b_n)$ to represent $|a_n/b_n| \rightarrow 0$ as $n \rightarrow \infty$, and use $a_n = O(b_n)$ for $\sup_n|a_n/b_n| < \infty$. For $a\in \mathbb{R}, a_+=\max(0, a)$. The notation $x\overset{d}{=}y$ means that the random variables $x$ and $y$ have the same distribution. For a random vector $x$, the notation $\|x\|_{\psi_2}$ denotes its sub-Gaussian norm. Finally, we reserve the notations $\Phi(y)$ and $\Phi^{-1}(y)$ for CDF of $\mathcal{N}(0,1)$ and its inverse function, respectively.

\section{Preliminaries}

\begin{lemma}[Binomial coefficient, Exercise 0.0.5 in \cite{vershynin2018high}]\label{lem:stirling}
For a given positive integer $p$, 
\[
{p \choose s} \leq \Big(\frac{ep}{s}\Big)^s,
\]
holds for all integers $s\in [1,p]$.
\end{lemma}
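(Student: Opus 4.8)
The plan is to reduce the binomial coefficient to a ratio of a power and a factorial, and then to control the factorial from below by the single term of the exponential series that it divides. First I would write
\[
{p \choose s} = \frac{p!}{s!\,(p-s)!} = \frac{p(p-1)\cdots(p-s+1)}{s!},
\]
and observe that each of the $s$ factors in the numerator is at most $p$, so that $p(p-1)\cdots(p-s+1) \le p^s$ and hence ${p \choose s} \le p^s/s!$. This step is valid precisely because $1 \le s \le p$, so all the factors $p, p-1, \ldots, p-s+1$ are positive integers.

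The key step is then to show $s! \ge (s/e)^s$, equivalently $e^s \ge s^s/s!$. This follows immediately from the Taylor expansion $e^s = \sum_{j=0}^{\infty} s^j/j!$, which is a sum of nonnegative terms (since $s>0$) and therefore dominates its $j=s$ term $s^s/s!$. Combining the two displays gives
\[
{p \choose s} \le \frac{p^s}{s!} \le \frac{p^s}{(s/e)^s} = \Big(\frac{ep}{s}\Big)^s,
\]
which is the claimed bound.

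There is essentially no obstacle here; the argument is elementary and self-contained, relying only on the positivity of the factors in the numerator and on the termwise nonnegativity of the exponential series. If one prefers to avoid invoking the full series, the inequality $s! \ge (s/e)^s$ can alternatively be obtained by induction on $s$, using $(1+1/s)^s \le e$ in the inductive step; I would mention this only as an aside, since the series argument is shorter.
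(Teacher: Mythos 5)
Your argument is correct: bounding $\binom{p}{s}\le p^s/s!$ and then using $s!\ge (s/e)^s$ via the term $s^s/s!$ of the series for $e^s$ is exactly the standard proof of this bound. The paper itself gives no proof and simply cites Exercise 0.0.5 of \cite{vershynin2018high}, whose intended solution is this same argument, so your proposal fills in the omitted details faithfully.
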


\begin{lemma}[Covering number of unit sphere, Corollary 4.2.13 in \cite{vershynin2018high}] \label{lem::covering-number}
    The covering numbers $\calN(S^{n-1}, \epsilon)$ of the unit Euclidean sphere $S^{n-1}:=\{v\in \mathbb{R}^n: \|v\|_2=1\}$ satisfy
    \begin{equation*}
        \calN(S^{n-1}, \epsilon) \leq \left(\frac{3}{\epsilon}\right)^{n},\quad \forall \epsilon \in (0,1].
    \end{equation*}
\end{lemma}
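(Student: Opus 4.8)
The plan is to run the standard volumetric packing argument. First I would reduce the covering problem to a packing problem: let $\mathcal{M}$ be a maximal $\epsilon$-separated subset of $S^{n-1}$, i.e. a subset with $\|u-v\|_2 > \epsilon$ for all distinct $u,v \in \mathcal{M}$ that is maximal with respect to inclusion among such sets (one exists since any $\epsilon$-separated subset of the compact set $S^{n-1}$ is finite). Maximality forces $\mathcal{M}$ to be an $\epsilon$-net: if some $x \in S^{n-1}$ satisfied $\|x-v\|_2 > \epsilon$ for every $v \in \mathcal{M}$, then $\mathcal{M} \cup \{x\}$ would still be $\epsilon$-separated, contradicting maximality. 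Hence $\calN(S^{n-1},\epsilon) \le |\mathcal{M}|$, and it suffices to bound $|\mathcal{M}|$.

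Next I would bound $|\mathcal{M}|$ by comparing volumes in $\mathbb{R}^n$. Because the points of $\mathcal{M}$ are pairwise at Euclidean distance $> \epsilon$, the open balls $B(v,\epsilon/2)$, $v \in \mathcal{M}$, are pairwise disjoint (if $w$ lay in two of them, the triangle inequality would give a pair of centers at distance $< \epsilon$). At the same time, each $v$ lies on the unit sphere, so $\|v\|_2 = 1$ and therefore $B(v,\epsilon/2) \subseteq (1+\epsilon/2)\, B(0,1)$. Writing $\mathrm{vol}$ for Lebesgue measure and using the scaling $\mathrm{vol}(B(x,r)) = r^n\,\mathrm{vol}(B(0,1))$, disjointness and containment yield
\[
|\mathcal{M}| \cdot (\epsilon/2)^n \,\mathrm{vol}(B(0,1)) \;\le\; (1+\epsilon/2)^n \,\mathrm{vol}(B(0,1)),
\]
so that $|\mathcal{M}| \le \big(1 + 2/\epsilon\big)^n$.

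Finally I would simplify the constant in the regime $\epsilon \in (0,1]$: since $1 + 2/\epsilon = (\epsilon+2)/\epsilon \le 3/\epsilon$ exactly when $\epsilon \le 1$, we conclude $\calN(S^{n-1},\epsilon) \le |\mathcal{M}| \le (3/\epsilon)^n$, which is the claimed bound.

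There is no genuine obstacle here; the argument is elementary. The only two points deserving a moment of care are (a) verifying that a maximal $\epsilon$-separated set is actually an $\epsilon$-net, which is what lets the covering number be controlled by a packing number, and (b) getting the inclusion $B(v,\epsilon/2) \subseteq (1+\epsilon/2)B(0,1)$ right so that the volume inequality is valid. One could sharpen the bound slightly by noting the small balls also avoid $(1-\epsilon/2)B(0,1)$, i.e. they lie in a thin spherical shell, but since the target constant is $3/\epsilon$ the crude containment in $(1+\epsilon/2)B(0,1)$ already suffices.
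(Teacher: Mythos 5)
Your argument is correct: the reduction from covering to a maximal $\epsilon$-separated set, the disjointness of the radius-$\epsilon/2$ balls, the containment in $(1+\epsilon/2)B(0,1)$, and the simplification $1+2/\epsilon \le 3/\epsilon$ for $\epsilon\le 1$ are all valid. The paper gives no proof of its own here — it simply cites Corollary 4.2.13 of \cite{vershynin2018high} — and your volumetric packing argument is exactly the standard proof behind that citation, so you have essentially taken the same route.
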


\begin{lemma}[Matrix deviation inequality, Exercise 9.1.8 in \cite{vershynin2018high}]
\label{matrix:dev}
Let $A$ be an $n\times p$ matrix whose rows $e_i^TA$ are independent, isotropic and sub-Gaussian random vectors in $\mathbb{R}^p$. Then for any given subset $T\subset \mathbb{R}^p$, the event 
\[
\sup_{x \in T}\Big|\|Ax\|_2-\sqrt{n}\|x\|_2\Big| \leq CK^2\Big(\gamma(T)+u \cdot {\rm rad}(T)\Big)
\]
holds with probability at least $1-2e^{-u^2}$. Here, $\gamma(T)=\mathbb{E}\sup_{x\in T}|h^Tx|, h\sim \mathcal{N}(0, I_p); {\rm rad}(T)=\sup_{x\in T}\|x\|_2; K=\max_{i}\|e_i^TA\|_{\psi_2}$; $C>0$ is a universal constant and $u \geq 0$ is any constant. 

\end{lemma}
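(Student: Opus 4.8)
The plan is to treat $X_x := \|Ax\|_2 - \sqrt{n}\,\|x\|_2$, $x\in\mathbb{R}^p$, as a random process and to invoke Talagrand's comparison inequality in its high‑probability (tail) form. The entire statement then reduces to a single estimate: that $X_\cdot$ has \emph{sub-Gaussian increments} with respect to the Euclidean metric, namely $\big\|X_x-X_y\big\|_{\psi_2}\le CK^2\|x-y\|_2$ for all $x,y\in\mathbb{R}^p$. Granting this, the tail version of Talagrand's comparison inequality (equivalently a Dudley-type chaining bound with deviations) gives, for any anchor point $x_0$, that $\sup_{x\in T}\big|X_x-X_{x_0}\big|\le CK^2\big(\gamma(T)+u\cdot{\rm diam}(T)\big)$ with probability at least $1-2e^{-u^2}$, where ${\rm diam}(T)$ is the Euclidean diameter. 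Since $X_\cdot$ is defined on all of $\mathbb{R}^p$ with $X_0=0$, I would apply this to $T\cup\{0\}$ with $x_0=0$ and use $\gamma(T\cup\{0\})=\gamma(T)$ together with ${\rm diam}(T\cup\{0\})\le 2\,{\rm rad}(T)$ (because ${\rm diam}(S)\le 2\,{\rm rad}(S)$ always) to obtain exactly $\sup_{x\in T}|X_x|\le CK^2\big(\gamma(T)+u\cdot{\rm rad}(T)\big)$ with probability $\ge 1-2e^{-u^2}$, after absorbing the factor $2$ into $C$.

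It remains to establish the sub-Gaussian increment bound, which is the crux. I would split it into two steps. In the first (the diagonal case $y=0$), I would write $\|Ax\|_2^2=\sum_{i=1}^n\langle e_i^TA,x\rangle^2$ as a sum of $n$ independent random variables, each with mean $\|x\|_2^2$ by isotropy and sub-exponential norm $\lesssim K^2\|x\|_2^2$; Bernstein's inequality then shows $\|Ax\|_2^2$ concentrates around $n\|x\|_2^2$, and linearizing the square root gives $\big\|\,\|Ax\|_2-\sqrt{n}\,\|x\|_2\,\big\|_{\psi_2}\le CK^2\|x\|_2$ — this is the sub-Gaussian-rows analogue of the concentration-of-norm bound (Theorem 3.1.1 of \cite{vershynin2018high}). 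In the second step I handle general $x,y$: by homogeneity of both $X_x$ and $\|x-y\|_2$ one may assume $\max(\|x\|_2,\|y\|_2)=1$, and then start from the polarization identity
\[
\big(\|Ax\|_2^2-n\|x\|_2^2\big)-\big(\|Ay\|_2^2-n\|y\|_2^2\big)=\big\langle (A^TA-nI)(x-y),\,x+y\big\rangle ,
\]
whose right-hand side (expanding $A^TA$ over the rows of $A$) is a centered second-order chaos; a decoupling plus Hanson--Wright argument shows it has the tail of a sub-Gaussian variable of scale $\lesssim K^2\|x-y\|_2\|x+y\|_2\le K^2\|x-y\|_2(\|x\|_2+\|y\|_2)$ in the relevant range. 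Converting back via $\|Ax\|_2-\sqrt{n}\|x\|_2=(\|Ax\|_2^2-n\|x\|_2^2)/(\|Ax\|_2+\sqrt{n}\|x\|_2)$ (and likewise for $y$), the increment $X_x-X_y$ equals this chaos divided by denominators which, by the first step, are each $\asymp\sqrt{n}\,\|\cdot\|_2$ with overwhelming probability; collecting terms yields the claimed increment bound.

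I expect the main obstacle to be precisely this last conversion: dividing by $\|Ax\|_2+\sqrt{n}\|x\|_2$ and $\|Ay\|_2+\sqrt{n}\|y\|_2$ is only harmless on the event where these norms are comparable to their means, so one has to dispose of the contribution of the complementary, exponentially unlikely event without degrading the clean sub-Gaussian scale $K^2\|x-y\|_2$ — e.g. by a truncation argument, or by working throughout with a suitably normalized process and stitching the two regimes together. All the remaining ingredients (Bernstein's inequality, decoupling, Hanson--Wright, and the tail/chaining form of Talagrand's comparison inequality) are standard and may be quoted from \cite{vershynin2018high}.
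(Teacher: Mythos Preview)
The paper does not prove this lemma at all: it is listed in the Preliminaries section as a quoted result (Exercise~9.1.8 in \cite{vershynin2018high}) and is used as a black box in the proof of Lemma~\ref{lem::gaussian-matrix-satisfy-SRE}. So there is no ``paper's own proof'' to compare against.

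That said, your proposal is the standard route and is essentially how the result is obtained in \cite{vershynin2018high}: one proves the sub-Gaussian increment bound $\|X_x-X_y\|_{\psi_2}\le CK^2\|x-y\|_2$ and then invokes the high-probability form of Talagrand's comparison inequality, anchoring at $0$ by enlarging $T$ to $T\cup\{0\}$. Your identification of the delicate step is also accurate: the conversion from $\|Ax\|_2^2-\|Ay\|_2^2$ to $\|Ax\|_2-\|Ay\|_2$ via division by $\|Ax\|_2+\|Ay\|_2$ is exactly where the work lies. In Vershynin's treatment this is handled not by a truncation/decoupling argument on a polarization identity, but by a direct case split on the relative size of $\|x-y\|_2$ versus $\|x\|_2$ (after normalizing $\|y\|_2=1$): when $\|x-y\|_2$ is large one simply bounds $X_x$ and $X_y$ separately via the ``diagonal'' concentration you describe; when $\|x-y\|_2$ is small one writes $X_x-X_y=(\|Ax\|_2-\|Ay\|_2)-\sqrt n(\|x\|_2-\|y\|_2)$, bounds the first difference by $\|A(x-y)\|_2$ and applies the diagonal estimate to $x-y$, and controls the second by the centering. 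Your polarization/Hanson--Wright route can be made to work as well, but the case-split avoids the denominator issue more cleanly and is what the reference actually does.
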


\begin{lemma}[$\chi^{2}$-concentration, Lemma 2 of \cite{6283602}] \label{lem::chi-square-concentration}
    Let $g_1,\ldots, g_d \overset{i.i.d.}{\sim} \calN(0,1)$. Then,
    \begin{align*}
       &\mathbb{P}\Big( \sum_{i=1}^{d} g_{i}^{2} < d(1-\tau) \Big) \leq e^{\frac{d}{2} \big(\tau + \log(1-\tau)\big)}, \quad \forall \tau \in (0,1).
    \end{align*}
\end{lemma}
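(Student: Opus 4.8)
The plan is to apply the standard Chernoff (exponential Markov) bound adapted to the \emph{lower} tail of a chi-squared variable. Write $S := \sum_{i=1}^d g_i^2$. For any parameter $s>0$, the event $\{S < d(1-\tau)\}$ coincides with $\{e^{-sS} > e^{-sd(1-\tau)}\}$, so Markov's inequality applied to the nonnegative random variable $e^{-sS}$ gives
\[
\mathbb{P}\big(S < d(1-\tau)\big) \;\leq\; e^{sd(1-\tau)}\,\E\big[e^{-sS}\big].
\]
By independence of $g_1,\ldots,g_d$ we have $\E[e^{-sS}] = \big(\E[e^{-sg_1^2}]\big)^d$, and a direct Gaussian integral (completing the square) shows $\E[e^{-sg_1^2}] = (1+2s)^{-1/2}$ for every $s>-1/2$, in particular for all $s>0$. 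Combining, for all $s>0$,
\[
\mathbb{P}\big(S < d(1-\tau)\big) \;\leq\; \exp\Big\{\,d\big(s(1-\tau) - \tfrac12\log(1+2s)\big)\Big\} \;=:\; \exp\{d\,g(s)\}.
\]

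Next I would optimize the exponent over $s>0$. The function $g(s) = s(1-\tau) - \tfrac12\log(1+2s)$ satisfies $g'(s) = (1-\tau) - (1+2s)^{-1}$ and $g''(s) = 2(1+2s)^{-2}>0$, so $g$ is strictly convex on $(0,\infty)$ and has a unique minimizer, obtained by solving $1+2s = (1-\tau)^{-1}$, namely $s^{*} = \tau/\big(2(1-\tau)\big)$. Substituting $1+2s^{*} = (1-\tau)^{-1}$ back in,
\[
g(s^{*}) \;=\; \frac{\tau}{2(1-\tau)}\,(1-\tau) \;-\; \frac12\log\frac{1}{1-\tau} \;=\; \frac{\tau}{2} + \frac12\log(1-\tau),
\]
and therefore $\mathbb{P}\big(S < d(1-\tau)\big) \leq \exp\big\{\tfrac{d}{2}\big(\tau + \log(1-\tau)\big)\big\}$, which is the asserted bound.

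There is no substantive obstacle here; the only points meriting a line of care are (i) that the minimizer $s^{*}$ lies in $(0,\infty)$ — this is exactly where the hypothesis $\tau>0$ enters, while $\tau<1$ keeps $1-\tau>0$ so that $\log(1-\tau)$ is well defined and $s^{*}$ is finite, which also makes $s^{*}>-1/2$ so the moment generating function is finite there — and (ii) that this critical point is a global minimum of $g$ on $(0,\infty)$, which follows from the strict convexity noted above. (Alternatively, the inequality is a special case of standard chi-squared lower-tail bounds and could be quoted from the literature, but the self-contained Chernoff computation above is short enough to include directly.)
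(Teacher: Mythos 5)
Your Chernoff computation is correct: the Markov bound on $e^{-sS}$, the moment generating function $\E[e^{-sg_1^2}]=(1+2s)^{-1/2}$, and the optimization at $s^{*}=\tau/(2(1-\tau))$ all check out and yield exactly the exponent $\tfrac{d}{2}(\tau+\log(1-\tau))$. The paper does not prove this lemma but imports it from the cited reference, and your argument is the standard self-contained derivation of that bound, so there is nothing further to reconcile.
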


\begin{lemma}[Singular values of Gaussian matrices, Corollary 5.35 in \cite{vershynin2010introduction}]\label{lem::eval-conc}
   Let $A$ be an $N \times n$ ($N\geq n$) matrix whose entries are independent standard normal random variables. Then, 
    \begin{equation*}
        \mathbb{P}\Big(\sqrt{N} -\sqrt{n} - t \le \sigma_{\min}(A) \le \sigma_{\max} (A) \le \sqrt{N} + \sqrt{n} + t\Big) \ge 1 - 2e^{-\frac{t^2}{2}},~~\forall t>0.
    \end{equation*}
\end{lemma}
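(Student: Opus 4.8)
The plan follows the classical two-ingredient argument: (i) control the expectations $\mathbb{E}\,\sigma_{\min}(A)$ and $\mathbb{E}\,\sigma_{\max}(A)$ via a Gaussian comparison inequality, (ii) show that $\sigma_{\min}(A)$ and $\sigma_{\max}(A)$ concentrate around their means with sub-Gaussian tails, and then close the argument with a union bound over the two tail events.

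\emph{Step 1 (expectations).} Use the variational formulas $\sigma_{\max}(A)=\sup_{u\in S^{n-1},\,v\in S^{N-1}}\langle Au,v\rangle$ and $\sigma_{\min}(A)=\inf_{u\in S^{n-1}}\sup_{v\in S^{N-1}}\langle Au,v\rangle$, so that both are extrema of the centered Gaussian process $X_{u,v}:=\langle Au,v\rangle$, which has covariance $\mathbb{E}[X_{u,v}X_{u',v'}]=\langle u,u'\rangle\langle v,v'\rangle$. Compare $X$ with the auxiliary process $Y_{u,v}:=\langle g,u\rangle+\langle h,v\rangle$, where $g\sim\mathcal{N}(0,I_n)$ and $h\sim\mathcal{N}(0,I_N)$ are independent. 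For $\sigma_{\max}$ one verifies the increment inequality $\mathbb{E}(X_{u,v}-X_{u',v'})^2\le\mathbb{E}(Y_{u,v}-Y_{u',v'})^2$ for all $u,u'\in S^{n-1}$ and $v,v'\in S^{N-1}$ — it reduces to $(1-\langle u,u'\rangle)(1-\langle v,v'\rangle)\ge 0$, which is automatic on the unit spheres — so Sudakov--Fernique gives $\mathbb{E}\,\sigma_{\max}(A)\le\mathbb{E}\sup_{u,v}Y_{u,v}=\mathbb{E}\|g\|_2+\mathbb{E}\|h\|_2\le\sqrt n+\sqrt N$. For $\sigma_{\min}$, which is a genuine min--max rather than a plain supremum, one invokes Gordon's Gaussian min--max comparison theorem with the same pair $(X,Y)$; this yields $\mathbb{E}\,\sigma_{\min}(A)\ge\sqrt N-\sqrt n$ (morally, $\mathbb{E}\|h\|_2-\mathbb{E}\|g\|_2$). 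These expectation bounds are classical; see Theorem~5.32 in \cite{vershynin2010introduction}.

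\emph{Step 2 (concentration and conclusion).} View $A$ as a vector of $Nn$ i.i.d.\ $\mathcal{N}(0,1)$ coordinates. The maps $A\mapsto\sigma_{\min}(A)$ and $A\mapsto\sigma_{\max}(A)$ are both $1$-Lipschitz with respect to the Frobenius norm: by Weyl's perturbation bound for singular values, $|\sigma_j(A)-\sigma_j(B)|\le\|A-B\|_2\le\|A-B\|_F$ for every index $j$. Hence the Gaussian concentration inequality for Lipschitz functions gives the one-sided tail bounds $\mathbb{P}\big(\sigma_{\max}(A)>\mathbb{E}\,\sigma_{\max}(A)+t\big)\le e^{-t^2/2}$ and $\mathbb{P}\big(\sigma_{\min}(A)<\mathbb{E}\,\sigma_{\min}(A)-t\big)\le e^{-t^2/2}$ for every $t>0$. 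Plugging in the Step 1 bounds, $\mathbb{P}\big(\sigma_{\max}(A)>\sqrt N+\sqrt n+t\big)\le e^{-t^2/2}$ and $\mathbb{P}\big(\sigma_{\min}(A)<\sqrt N-\sqrt n-t\big)\le e^{-t^2/2}$; a union bound over these two events gives the claim with probability at least $1-2e^{-t^2/2}$.

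I expect the only real obstacle to be Step 1 for $\sigma_{\min}$: the estimate for $\sigma_{\max}$ is a one-line Sudakov--Fernique computation, whereas $\sigma_{\min}$ is an infimum over $u$ of a supremum over $v$, so one must appeal to Gordon's stronger min--max comparison theorem and check its hypotheses for the two-level index set $S^{n-1}\times S^{N-1}$ with the correct orientation of the increment (or covariance) comparisons across the outer and inner coordinates. Everything else — the variational formulas, the Weyl/Lipschitz estimate, and the Gaussian concentration inequality — is routine. A fully self-contained but lossier alternative would replace Step 1 by an $\varepsilon$-net argument over $S^{n-1}\times S^{N-1}$, using the covering-number bound of Lemma~\ref{lem::covering-number} together with a Gaussian tail estimate for each fixed $(u,v)$; this reproduces the bounds up to universal constants but not with the sharp $\sqrt N\pm\sqrt n$ form, which is exactly what the Gaussian comparison buys.
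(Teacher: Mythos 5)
Your proposal is correct, and it is precisely the argument behind the cited result: the paper states this lemma without proof, quoting Corollary 5.35 of \cite{vershynin2010introduction}, whose proof is exactly your two steps (expectation bounds $\sqrt N\pm\sqrt n$ via Sudakov--Fernique/Gordon, i.e.\ Theorem 5.32 there, followed by Gaussian concentration for the $1$-Lipschitz maps $\sigma_{\min},\sigma_{\max}$ and a union bound). The only point you rightly flag as delicate — passing from $\mathbb{E}\|h\|_2-\mathbb{E}\|g\|_2$ to $\sqrt N-\sqrt n$ for $\sigma_{\min}$ — is handled in that cited theorem, so deferring to it is consistent with how the paper itself treats the lemma.
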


\begin{lemma}\label{lem::Gaussian-order-statistics}
    Let $g_{1}, \ldots, g_{p}\simiid \calN(0,1)$, and $|g|_{(1)}\geq |g|_{(2)}\geq \cdots \geq |g|_{(p)}$ denote the order statistics of $\big(|g_{1}|, \ldots, |g_{p}|\big)$.
    \begin{enumerate}[label=(\roman*)]
        \item For all $2\leq k \leq p$, $\E |g|_{(k)} \leq \sqrt{2\log (\frac{2p}{k-1})}$.
        \item For all $1\leq k \leq p$, $\mathbb{P}\Big( |g|_{(k)} - \E |g|_{(k)} \geq u \Big) \leq e^{-\frac{u^2}{2}}, ~\forall u >0$.       
    \end{enumerate}
\end{lemma}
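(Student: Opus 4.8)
Here the plan is to treat the two parts separately, since they call for different tools: part (i) is an in-expectation bound best obtained through a Laplace-transform / Jensen argument, whereas part (ii) is a Gaussian concentration statement that follows from a Lipschitz property.

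For part (i), I would begin from the elementary observation that for every $\lambda\ge 0$, since $|g|_{(1)},\dots,|g|_{(k)}$ are all at least $|g|_{(k)}$ and $x\mapsto e^{\lambda x}$ is increasing,
\[
k\,e^{\lambda|g|_{(k)}}\;\le\;\sum_{i=1}^{k}e^{\lambda|g|_{(i)}}\;\le\;\sum_{i=1}^{p}e^{\lambda|g_i|}.
\]
Taking expectations and using $\E e^{\lambda|g_1|}\le 2e^{\lambda^2/2}$ (which follows by symmetry from $\E e^{\lambda g_1}=e^{\lambda^2/2}$) gives $\E e^{\lambda|g|_{(k)}}\le (2p/k)e^{\lambda^2/2}$, and then Jensen's inequality yields $\lambda\,\E|g|_{(k)}\le \log\E e^{\lambda|g|_{(k)}}\le \log(2p/k)+\lambda^2/2$. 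Plugging in $\lambda=\sqrt{2\log(2p/(k-1))}$ (which is positive for $2\le k\le p$, since then $2p/(k-1)>1$) and using $\log(2p/k)\le\log(2p/(k-1))$ gives $\E|g|_{(k)}\le \tfrac12\sqrt{2\log(2p/(k-1))}+\tfrac12\sqrt{2\log(2p/(k-1))}=\sqrt{2\log(2p/(k-1))}$. (Optimizing $\lambda$ exactly would give the slightly sharper $\sqrt{2\log(2p/k)}$, but the stated form is all that is needed downstream.)

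For part (ii), the key step is to show that the map $F:\mathbb{R}^p\to\mathbb{R}$ given by $F(x):=|x|_{(k)}$, the $k$-th largest among $|x_1|,\dots,|x_p|$, is $1$-Lipschitz in the Euclidean norm. This is a composition of two $1$-Lipschitz maps: first $x\mapsto(|x_1|,\dots,|x_p|)$, which is $1$-Lipschitz in $\ell_2$ because $\sum_i(|x_i|-|y_i|)^2\le\sum_i(x_i-y_i)^2$; and second, the $k$-th-order-statistic functional $a\mapsto a_{(k)}$, which satisfies $|a_{(k)}-b_{(k)}|\le\|a-b\|_\infty\le\|a-b\|_2$ — the $\ell_\infty$ bound being the standard fact that if (at least) $k$ coordinates of $a$ are $\ge a_{(k)}$, then the corresponding coordinates of $b$ are $\ge a_{(k)}-\|a-b\|_\infty$, so $b_{(k)}\ge a_{(k)}-\|a-b\|_\infty$, and symmetrically. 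Applying the Gaussian concentration inequality for $1$-Lipschitz functions (see, e.g., \cite{vershynin2018high}) to $F(g_1,\dots,g_p)$ then yields $\mathbb{P}(|g|_{(k)}-\E|g|_{(k)}\ge u)\le e^{-u^2/2}$ for all $u>0$, as claimed.

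Neither step is deep; the only subtlety I anticipate is in part (i), where it is tempting to go through the tail integral $\E|g|_{(k)}=\int_0^\infty\mathbb{P}(|g|_{(k)}>t)\,dt$ together with a union/Markov bound on $\mathbb{P}(|g|_{(k)}>t)=\mathbb{P}(\#\{i:|g_i|>t\}\ge k)$, but the inevitable $1/t_0$-type correction term there spoils the clean constant. The Laplace-transform route above sidesteps this issue entirely, and part (ii) is then routine once the $1$-Lipschitz property is in hand.
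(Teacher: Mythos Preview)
Your proof is correct. Part (ii) is essentially identical to the paper's argument: both verify that $g\mapsto|g|_{(k)}$ is $1$-Lipschitz (by composing the coordinatewise absolute value with the $k$th order statistic) and then apply Gaussian concentration.

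For part (i), however, you take a genuinely different route. The paper goes through the quantile representation $|g|_{(k)}\overset{d}{=}\Phi^{-1}(1-U_{(k)}/2)$, uses the Gaussian tail bound $1-\Phi(t)\le e^{-t^2/2}$ to obtain $(\E|g|_{(k)})^2\le\E|g|_{(k)}^2\le\E[2\log(2/U_{(k)})]$, and then evaluates $\E\log(1/U_{(k)})$ via the R\'enyi representation of exponential order statistics, $Y_{(j)}-Y_{(j+1)}\overset{d}{=}\mathrm{Exp}(j)$, which gives $\sum_{j=k}^{p}1/j\le\log(p/(k-1))$. Your Cram\'er--Chernoff argument, based on $k\,e^{\lambda|g|_{(k)}}\le\sum_{i=1}^{p}e^{\lambda|g_i|}$ together with $\E e^{\lambda|g_1|}\le 2e^{\lambda^2/2}$ and Jensen, is more elementary: it needs neither the quantile transform nor the exponential order-statistic decomposition, and in fact yields the slightly sharper bound $\sqrt{2\log(2p/k)}$ if one optimizes over $\lambda$. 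The paper's approach, on the other hand, passes through $\E|g|_{(k)}^2$, so it gives a second-moment bound for free, and the R\'enyi representation makes transparent exactly where the harmonic sum $\sum_{j=k}^{p}1/j$ comes from.
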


\begin{proof}

  Prove $(i)$.  Let $U_1,\ldots, U_p \overset{i.i.d.}{\sim}{\rm Unif}(0,1)$ with order statistics $U_{(1)}\leq U_{(2)}\leq \cdots \leq U_{(p)}$, and $Y_{1},\ldots, Y_{p}\simiid \Exp(1)$ with order statistics $Y_{(1)} \geq Y_{(2)} \geq \ldots \geq Y_{(p)}$. We will use the following well known distributional results: for $k=1,2,\ldots, p$,
\begin{align*}
|g|_{(k)}\overset{d}{=} \Phi^{-1}(1-U_{(k)}/2), \quad Y_{(k)}\overset{d}{=} \log\frac{1}{U_{(k)}}, \quad Y_{(k)}-Y_{(k+1)}\overset{d}=\Exp(k),
\end{align*}
where $\Phi^{-1}(\cdot)$ is the inverse function of CDF of standard normal and $Y_{(p+1)}:=0$. Then, 
     \begin{align*}
        (\E |g|_{(k)})^2 &\leq \E |g|_{(k)}^{2} = \E \big( \Phi^{-1}(1-U_{(k)}/2) \big)^2 \overset{(a)}{\leq} \E \big[ 2 \log \frac{2}{U_{(k)}} \big] \\
        &= 2\log 2 + 2\sum_{j=k}^{p}\mathbb{E}(Y_{(j)}-Y_{(j+1)})=2\log 2 + 2\sum_{j=k}^{p}\frac{1}{j} \overset{(b)}{\leq} 2 \log \frac{2p}{k-1}.
    \end{align*}
   Here, $(a)$ is due to the Gaussian tail bound $1-\Phi(t)\leq e^{-\frac{t^2}{2}}, \forall t>0$; $(b)$ holds because $\sum_{j=k}^{p}\frac{1}{j}\leq \sum_{j=k}^p\int_{j-1}^{j}\frac{1}{x} d x = \log p - \log (k-1)$ for $k\geq 2$.

    Prove $(ii)$. Given that each order statistic is a 1-Lipschitz function (e.g. Example 2.29 in \cite{wainwright2019high}), we have
    \[
    \big||g|_{(k)}-|\tilde{g}|_{(k)}\big|\leq \sqrt{\sum_{i=1}^p(|g_i|-|\tilde{g}_i|)^2} \leq \sqrt{\sum_{i=1}^p(g_i-\tilde{g}_i)^2},
    \]
    where in the second inequality we have used $\big||a|-|b|\big|\leq |a-b|, \forall a, b\in \mathbb{R}$. Hence, $|g|_{(k)}$, as a function of $(g_1,\ldots, g_p)$, is 1-Lipschitz as well. Applying standard Gaussian concentration inequality (Theorem 5.6 in \cite{blm13}) completes the proof. 
\end{proof}

\begin{lemma}[Proposition E.1 in \cite{bellec2018slope}]\label{lem::prop-E.1}
    Let $g_{1}, \ldots, g_{p}\simiid \calN(0,1)$, and $|g|_{(1)}\geq |g|_{(2)}\geq \cdots \geq |g|_{(p)}$ denote the order statistics of $\big(|g_{1}|, \ldots, |g_{p}|\big)$. Then for any $s\in \{1\ldots,p\}$ and all $t>0$, we have
    \begin{equation*}
        \mathbb{P}\bigg( \frac{1}{s} \sum_{j=1}^{s} |g|_{(j)}^{2} > t \log (2p/s) \bigg) \leq (2p/s)^{1-\frac{3t}{8}}.
    \end{equation*}
\end{lemma}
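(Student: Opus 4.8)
The plan is to recast the left-hand side as a supremum over subsets and then combine a union bound with a $\chi^2$ tail estimate. The key observation is the elementary identity
\[
\sum_{j=1}^{s}|g|_{(j)}^{2}\;=\;\max_{S\subseteq[p],\,|S|=s}\ \sum_{i\in S} g_i^{2},
\]
valid because the sum of the $s$ largest squared coordinates is attained at the index set of the $s$ largest $|g_i|$'s. Setting $L:=\log(2p/s)$ (so $L\ge\log 2$ since $s\le p$) and noting $\sum_{i\in S} g_i^2\sim\chi^2_s$ for each fixed $S$, a union bound combined with Lemma~\ref{lem:stirling} gives
\[
\mathbb{P}\!\left(\frac1s\sum_{j=1}^{s}|g|_{(j)}^{2}>tL\right)\;\le\;\binom{p}{s}\,\mathbb{P}\big(\chi^2_s>tsL\big)\;\le\;\Big(\frac{ep}{s}\Big)^{s}\mathbb{P}\big(\chi^2_s>tsL\big).
\]

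I would then estimate the $\chi^2$ tail through the exponential moment $\mathbb{E}e^{\theta\chi^2_s}=(1-2\theta)^{-s/2}$, which yields $\mathbb{P}(\chi^2_s>a)\le(1-2\theta)^{-s/2}e^{-\theta a}$ for $\theta\in(0,\tfrac12)$. Substituting $a=tsL$ converts $e^{-\theta a}$ into $(2p/s)^{-\theta t s}$, so the choice $\theta=3/8$ is exactly what produces the exponent $\tfrac{3t}{8}$ in the conclusion (and it also turns $(1-2\theta)^{-s/2}$ into the harmless constant $2^{s}$). Before pushing the inequalities through it is worth disposing of the trivial case: if $t\le 8/3$ then $1-\tfrac{3t}{8}\ge 0$, and since $2p/s\ge2$ the asserted bound $(2p/s)^{1-3t/8}$ is already $\ge 1$; hence one may assume $t>8/3$, which in particular guarantees $tsL>s$, placing the $\chi^2$ Chernoff bound in its meaningful range and making the target bound nontrivial.

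The step I expect to be the main obstacle is landing on \emph{exactly} the constant $3/8$, as opposed to merely \emph{some} constant, because the crude inputs above are too lossy in parts of the parameter range: $\binom{p}{s}\le(ep/s)^s$ together with the un-optimized $\chi^2$ bound leaves behind a residual factor of order $e^{s}$ that a naive estimate cannot absorb when $s$ is a constant fraction of $p$ (where the union bound is very wasteful) or when $t$ is just above $8/3$, and for small $s$ the polynomial prefactors hidden in these tail bounds actually matter. To overcome this I would split on the size of $s$: for $s$ not small relative to $p$, use the monotone bound $\sum_{j=1}^{s}|g|_{(j)}^2\le\|g\|_2^2\sim\chi^2_p$ and apply the $\chi^2$ tail directly, with no binomial factor; for the remaining (small‑$s$) range, keep the union bound but use the sharper Stirling estimate $\binom{p}{s}\le(ep/s)^s/\sqrt{2\pi s}$ together with the $\chi^2$ tail including its incomplete‑Gamma prefactor, $\mathbb{P}(\chi^2_s>a)\le\frac{(a/2)^{s/2-1}e^{-a/2}}{\Gamma(s/2)}\cdot\frac{a}{a-s+2}$ for $a>s-2$; and treat $s=1$ by hand, where $\binom{p}{1}=p$ and $\mathbb{P}(|g|>\sqrt a)\le 2e^{-a/2}$ already give $\mathbb{P}(\max_i g_i^2>t\log(2p))\le 2p(2p)^{-t/2}=(2p)^{1-t/2}\le(2p)^{1-3t/8}$. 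Balancing these estimates against $(2p/s)^{1-3t/8}$, using $L\ge\log2$ throughout, should then yield the stated bound; the Gaussian order‑statistic facts in Lemma~\ref{lem::Gaussian-order-statistics} are available to streamline the small‑$s$ computations if needed.
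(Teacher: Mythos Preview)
The paper does not supply its own proof of this lemma; it is quoted verbatim as Proposition~E.1 of \cite{bellec2018slope} and invoked as a black box in the proof of Lemma~\ref{lem::median-inequality}. There is therefore no in-paper argument to compare your proposal against.

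On the proposal itself: the reduction $\sum_{j\le s}|g|_{(j)}^2=\max_{|S|=s}\sum_{i\in S}g_i^2$ followed by a union bound and a $\chi^2_s$ Chernoff tail is indeed the standard route to inequalities of this form, and your disposal of the range $t\le 8/3$ and your separate treatment of $s=1$ via the Gaussian tail $\mathbb{P}(|g|>\sqrt{a})\le 2e^{-a/2}$ are both correct. You are also right that the crude inputs $\binom{p}{s}\le(ep/s)^s$ and $\mathbb{P}(\chi^2_s>a)\le 2^s e^{-3a/8}$ (the Chernoff bound at $\theta=3/8$) do not quite deliver the exact constant~$3/8$: for $s\ge 2$ one needs $\binom{p}{s}2^s\le(2p/s)^{1+3t(s-1)/8}$, and with $t$ just above $8/3$ the right side is only $(2p/s)^s$, which does not dominate $(ep/s)^s\cdot 2^s$ in general. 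Your proposed remedies---splitting on the relative size of $s$, using Stirling with the $\sqrt{2\pi s}$ denominator, and replacing the union bound by $\sum_{j\le s}|g|_{(j)}^2\le\|g\|_2^2\sim\chi^2_p$ when $s$ is a large fraction of $p$---are all reasonable, but the write-up stops short of verifying that they close the gap uniformly; the large-$s$ branch in particular is delicate near $t=8/3$, since then $ts\log(2p/s)$ is only modestly above $p$ and the $\chi^2_p$ tail is not yet small. If you want a self-contained proof you would need to carry these case analyses through; otherwise, citing \cite{bellec2018slope} as the present paper does is the expedient choice.
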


\begin{lemma}\label{lem::median-inequality}
    Under the assumptions of Lemma \ref{lem::prop-E.1}, then for any fixed $\delta_1>0$, 
    \begin{align*}
        & \mathbb{P} \bigg( \Big( \max_{1\leq j\leq k} \frac{|g|_{(j)}}{4\sqrt{\log(2p/j)}} \Big) \vee \frac{|g|_{(k+1)}}{(1+\delta_1)\sqrt{2\log(p/k)}} \leq 1 \bigg) \\
        & \geq 1 - \frac{k}{2p} -\exp\Big\{-\frac{1}{2}\Big((1+\delta_1)\sqrt{2\log(p/k)}-\sqrt{2\log(2p/k)}\Big)^2\Big\}>\frac{1}{2},
    \end{align*}
when $p/k$ is large enough.
\end{lemma}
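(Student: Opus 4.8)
The plan is to bound the complement of the stated event by a union bound over two failure modes, $E_1 := \{|g|_{(j)} > 4\sqrt{\log(2p/j)} \text{ for some } j \in \{1,\dots,k\}\}$ and $E_2 := \{|g|_{(k+1)} > (1+\delta_1)\sqrt{2\log(p/k)}\}$, and then to show separately that $\mathbb{P}(E_1) \le \frac{k}{2p}$ and $\mathbb{P}(E_2) \le \exp\{-\frac12((1+\delta_1)\sqrt{2\log(p/k)}-\sqrt{2\log(2p/k)})^2\}$. Once these are in hand, the final strict inequality $>\frac12$ is free: as $p/k\to\infty$ the first bound tends to $0$ trivially, and the second tends to $0$ because its exponent diverges as soon as $(1+\delta_1)^2\log(p/k) > \log(2p/k)$.

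For $E_2$ I would invoke both parts of Lemma \ref{lem::Gaussian-order-statistics}. Part (i), applied with the index $k+1\ge 2$, gives $\mathbb{E}|g|_{(k+1)} \le \sqrt{2\log(2p/k)}$. As soon as $p/k$ is large enough that $(1+\delta_1)\sqrt{2\log(p/k)} \ge \sqrt{2\log(2p/k)}$ (equivalently $\log(p/k)\ge(\log 2)/((1+\delta_1)^2-1)$), part (ii) applied with $u = (1+\delta_1)\sqrt{2\log(p/k)}-\sqrt{2\log(2p/k)}$ yields exactly the claimed bound on $\mathbb{P}(E_2)$.

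The delicate estimate is $\mathbb{P}(E_1)\le\frac{k}{2p}$, and here I would avoid a union bound over all $k$ indices and use a dyadic decomposition instead. Set $L=\lfloor\log_2 k\rfloor$ and split $\{1,\dots,k\}$ into the blocks $B_l=\{2^l,\dots,\min(2^{l+1}-1,k)\}$, $l=0,\dots,L$. Since both $j\mapsto|g|_{(j)}$ and $j\mapsto 4\sqrt{\log(2p/j)}$ are nonincreasing, for $j\in B_l$ one has $|g|_{(j)}/(4\sqrt{\log(2p/j)})\le |g|_{(2^l)}/(4\sqrt{\log(p/2^l)})$, so $E_1\subseteq\bigcup_{l=0}^L\{|g|_{(2^l)} > 4\sqrt{\log(p/2^l)}\}$. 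For $1\le l\le L$, Lemma \ref{lem::Gaussian-order-statistics}(i) gives $\mathbb{E}|g|_{(2^l)}\le\sqrt{2\log(2p/(2^l-1))}\le 2\sqrt{\log(p/2^l)}$, the last step being an elementary inequality valid whenever $2^l\le p/4$, hence for all relevant $l$ once $p/k\ge 4$; then Lemma \ref{lem::Gaussian-order-statistics}(ii) with $u=2\sqrt{\log(p/2^l)}$ bounds the $l$-th term by $e^{-2\log(p/2^l)}=(2^l/p)^2$. The $l=0$ term is handled directly by a union bound over coordinates and the Gaussian tail bound: $\mathbb{P}(|g|_{(1)} > 4\sqrt{\log p}) \le 2p\,e^{-8\log p} = 2p^{-7}\le p^{-2}$. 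Summing the geometric series, $\mathbb{P}(E_1)\le p^{-2}\sum_{l=0}^L 4^l \le \frac{4}{3}(2^L/p)^2 \le \frac{4k^2}{3p^2}\le\frac{k}{2p}$ whenever $k/p\le 3/8$, i.e., once $p/k$ is large enough.

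The main obstacle is precisely this reduction. A naive union bound over $j=1,\dots,k$ only gives per-index failure probabilities of order $(j/2p)^{\alpha}$ with $\alpha\approx(4-\sqrt2)^2/2\approx 3.3$ — the exponent coming from the gap between the threshold $4\sqrt{\log(2p/j)}$ and the mean $\mathbb{E}|g|_{(j)}\approx\sqrt{2\log(2p/j)}$ — and $\sum_{j\le k}(j/2p)^{\alpha}$ does not beat $\frac{k}{2p}$ unless $k$ is polynomially smaller than $p$, which is weaker than the hypothesis allows. Collapsing the $k$ constraints onto the $O(\log k)$ dyadic landmarks $|g|_{(2^l)}$ turns the tail sum into $O((k/p)^2)=o(k/p)$, which comfortably fits under $\frac{k}{2p}$ for $p/k$ large; the rest (the elementary comparisons among $\log(2p/(2^l-1))$, $\log(4p/2^l)$ and $\log(p/2^l)$, and tracking that the threshold on $p/k$ can be taken absolute for $E_1$ and depending on $\delta_1$ only for $E_2$) is routine.
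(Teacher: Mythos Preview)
Your argument is correct and follows the same overall architecture as the paper: union bound over $E_1$ and $E_2$, Lemma~\ref{lem::Gaussian-order-statistics} for $E_2$, and a dyadic reduction of $E_1$ to the landmarks $|g|_{(2^l)}$.

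The one genuine difference is the tool used to control each landmark. The paper invokes Lemma~\ref{lem::prop-E.1} with $t=16/3$ (together with $|g|_{(j)}^2\le j^{-1}\sum_{l\le j}|g|_{(l)}^2$) to get $\mathbb{P}\big(|g|_{(2^l)}>\tfrac{4}{\sqrt3}\sqrt{\log(2p/2^l)}\big)\le 2^l/(2p)$, so the dyadic sum is exactly $(2^q-1)/(2p)\le k/(2p)$ with no side condition on $p/k$. You instead reuse Lemma~\ref{lem::Gaussian-order-statistics}(i)--(ii) at the landmarks, obtaining the sharper per-level bound $(2^l/p)^2$ and then $\mathbb{P}(E_1)\le \tfrac{4}{3}(k/p)^2\le k/(2p)$ once $k/p\le 3/8$. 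Your route is a touch more self-contained (it never uses Lemma~\ref{lem::prop-E.1}) and yields a stronger $O((k/p)^2)$ estimate, at the cost of the extra elementary inequality $2\log\!\big(2p/(2^l{-}1)\big)\le 4\log(p/2^l)$ and a separate treatment of $l=0$; the paper's route avoids these bookkeeping steps and lands on $k/(2p)$ directly.
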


\begin{proof}
The proof follows that of Proposition E.2 in \cite{bellec2018slope}. 

Lemma \ref{lem::prop-E.1} with $t=16/3$ and the inequality $|g|_{(j)}^{2} \leq \frac{1}{j}\sum_{l=1}^{j}|g|_{(l)}^{2}$ imply 
    \begin{equation}
       \mathbb{P} \bigg( |g|_{(j)}^{2} \leq \frac{16}{3} \log (2p/j) \bigg) \geq 1 -\frac{j}{2p}, \quad j=1,\ldots, p. \label{eq::gaussian-order-stat-16/3-concent}
    \end{equation}
    Let $q\geq 0$ be an integer such that $2^{q} \leq k < 2^{q+1}$. Applying \eqref{eq::gaussian-order-stat-16/3-concent} to $j=2^{l}$ for $l=0, \ldots, q-1$ and using the union bound, we obtain that the event
    \begin{equation*}
        \Omega_{0} := \bigg\{ \max_{l=0,\ldots, q-1} \frac{|g|_{(2^{l})}\sqrt{3}}{4\sqrt{\log(2p/2^{l})}} \leq 1 \bigg\}
    \end{equation*}
    satisfies $\mathbb{P}(\Omega_{0}) \geq 1 - \sum_{l=0}^{q-1} \frac{2^{l}}{2p} = 1 - \frac{2^{q}-1}{2p} \geq 1 - \frac{k}{2p}$. For any $j< 2^{q}$, there exists $l\in\{0, \ldots, q-1\}$ such that $2^{l}\leq j < 2^{l+1}$. On the event $\Omega_{0}$,
    \begin{equation*}
        |g|_{(j)} \leq |g|_{(2^{l})} \leq \frac{4}{\sqrt{3}} \sqrt{\log \frac{2p}{2^{l}}} \leq \frac{4}{\sqrt{3}} \sqrt{\log \frac{4p}{j}} \leq 4 \sqrt{\log \frac{2p}{j}}, \quad \forall j <2^{q}.
    \end{equation*}
    And for $2^{q}\leq j \leq k$,
    \begin{equation*}
        |g|_{(j)} \leq |g|_{(2^{q-1})} \leq \frac{4}{\sqrt{3}} \sqrt{\log \frac{2p}{2^{q-1}}} < \frac{4}{\sqrt{3}} \sqrt{\log \frac{8p}{j}} \leq 4 \sqrt{\log \frac{2p}{j}}.
    \end{equation*}
    Thus, on the event $\Omega_{0}$ we have $|g|_{(j)} \leq 4 \sqrt{\log (2p/j)}$ for all $j=1,\ldots, k$. 

    In addition, using Lemma \ref{lem::Gaussian-order-statistics}, we have
    \begin{equation*}
        \mathbb{P}\bigg( \frac{|g|_{(k+1)}}{(1+\delta_1)\sqrt{2\log \frac{p}{k}}} \geq 1 \bigg) \leq e^{-\frac{1}{2}\big((1+\delta_1)\sqrt{2\log(p/k)}-\sqrt{2\log(2p/k)}\big)^2}.
    \end{equation*}
\end{proof}

\section{Proof of Propositions \ref{prop::MLE-bound}-\ref{prop:lasso:denoiser}}

\subsection{Proof of Proposition \ref{prop::MLE-bound}}\label{prop::MLE-bound:proof}

Recalling the definition of $\betaM$ in \eqref{eq::MLE0}, we start with the basic inequality
\[
\|y- X \betaM\|_2^2 \leq \|y- X \beta\|_2^2.
\]
With $y=X\beta+z$, this implies
\begin{equation}\label{eq:fub:eq:MLE}
     \frac{1}{n}\|X(\betaM-\beta)\|_{2}^{2} \leq \frac{2}{n}z^{T} X(\betaM-\beta).
\end{equation}
For a given $s\in\{1,\ldots,p\}$, define
\begin{equation}\label{eq::v-term-def}
    V_{s}:= \inf_{\Delta\in T_s} \frac{1}{n}\|X \Delta\|_{2}^{2}, \quad T_s:=\Big\{\Delta\in \mathbb{R}^p: ~ \|\Delta\|_{2}=1,~\|\Delta\|_{0}\leq s \Big\}.
\end{equation}
Both $\betaM$ and $\beta$ are in $\Theta_k$, hence $\|\betaM-\beta\|_0 \leq 2k$. We then continue from \eqref{eq:fub:eq:MLE} to obtain 
\begin{align*}
    V_{2k}\cdot\|\betaM-\beta\|_{2}^{2} &\leq \frac{1}{n}\|X(\betaM - \beta)\|_{2}^{2} \leq \frac{2}{n} z^{T} X(\betaM-\beta)\\
    &\leq \frac{2}{\sqrt{n}} \|\betaM-\beta\|_{2} \cdot \sup_{u\in T_{2k}} z^{T}Xu/\sqrt{n}.
\end{align*}
Therefore, 
\begin{equation*}
   \|\betaM - \beta\|_{2} \leq \frac{2}{\sqrt{n}V_{2k}} \cdot\sup_{u\in T_{2k}} z^{T}Xu/\sqrt{n}.
\end{equation*}
Then, using Cauchy–Schwarz inequality, we have
\begin{equation}\label{eq:upperbound2:MLE}
    \sup_{\beta\in \Theta_k}\E \|\betaM - \beta\|_{2}^{m} \leq  \frac{2^{m}}{n^{\frac{m}{2}}} \cdot \bigg(\E \frac{1}{V_{2k}^{2m}}\bigg)^{\frac{1}{2}} \cdot \bigg( \E \bigg(\sup_{u\in T_{2k}} z^{T} Xu/\sqrt{n} \bigg)^{2m}\bigg)^{\frac{1}{2}}.
\end{equation}
Hence, we need to bound the two terms on the right-hand side of \eqref{eq:upperbound2:MLE}. This is done in Lemmas \ref{lem::inverse-min-eigenvalue} and \ref{lem::gaussian-complexity}. Combining \eqref{eq:upperbound2:MLE} with Lemmas \ref{lem::inverse-min-eigenvalue} and \ref{lem::gaussian-complexity} completes the proof of Proposition \ref{prop::MLE-bound}. 

    \begin{lemma}\label{lem::inverse-min-eigenvalue}
    Suppose the matrix $X\in\R^{n\times p}$ has i.i.d $\mathcal{N}(0,1)$ entries. For $s\in \{1,\ldots,p\}$, let $V_{s}$ be defined as in \eqref{eq::v-term-def}. If $(s\log (ep/s))/n\rightarrow 0$, then, for every fixed $r>0$, we have
    \begin{equation*}
        \E \frac{1}{V_{s}^{r}} = O(1).
    \end{equation*}
\end{lemma}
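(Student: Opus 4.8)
The plan is to control the lower tail of $V_s = \inf_{\Delta \in T_s}\frac{1}{n}\|X\Delta\|_2^2$, where $T_s$ is the set of unit-norm $s$-sparse vectors, and then integrate. The key observation is that $V_s$ is bounded below by the square of the smallest $s$-sparse singular value of $X/\sqrt n$, so it suffices to show that $V_s$ is bounded away from zero with overwhelming probability and that its lower tail decays fast enough to make $\E V_s^{-r}$ finite and $O(1)$.

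First I would fix a subset $S \subseteq [p]$ with $|S| = s$. Restricted to coordinates in $S$, the relevant quantity is $\sigma_{\min}^2(X_S)/n$ where $X_S \in \mathbb{R}^{n\times s}$ has i.i.d.\ $\mathcal{N}(0,1)$ entries. By Lemma \ref{lem::eval-conc} with $N = n$, $n$ replaced by $s$, and $t = \sqrt{n}\delta$ for a small constant $\delta>0$,
\begin{align*}
\mathbb{P}\Big(\sigma_{\min}(X_S) \leq \sqrt n - \sqrt s - \sqrt n \delta\Big) \leq 2e^{-n\delta^2/2}.
\end{align*}
Since $(s\log(ep/s))/n \to 0$ implies $s/n \to 0$, for $n$ large we have $\sqrt s + \sqrt n \delta \leq 2\sqrt n \delta$ say, so on the complementary event $\sigma_{\min}^2(X_S)/n \geq (1-2\delta)^2$. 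Next I would take a union bound over all $\binom{p}{s} \leq (ep/s)^s$ choices of $S$ (Lemma \ref{lem:stirling}): for a fixed small $\delta$, with $t = \sqrt n\,\delta$,
\begin{align*}
\mathbb{P}\Big(V_s \leq (1-2\delta)^2\Big) \leq \binom{p}{s} \cdot 2e^{-n\delta^2/2} \leq 2\exp\Big(s\log(ep/s) - n\delta^2/2\Big),
\end{align*}
which tends to $0$ because $s\log(ep/s) = o(n)$. This already shows $V_s$ is bounded below by a constant w.h.p.

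To upgrade this to a bound on $\E V_s^{-r}$, I would write $\E V_s^{-r} = \int_0^\infty \mathbb{P}(V_s^{-r} > u)\,du = \int_0^\infty \mathbb{P}(V_s < u^{-1/r})\,du$ and split the integral at a constant threshold. On the bounded part $u \le (1-2\delta)^{-2r}$ we bound the probability by $1$, contributing $O(1)$. On the tail $u > (1-2\delta)^{-2r}$, i.e.\ for small values $v = u^{-1/r} < (1-2\delta)^2$, I need a quantitative small-ball bound: combining the union bound above but now with a general deviation parameter, for any $v \in (0,1)$,
\begin{align*}
\mathbb{P}(V_s < v) \leq 2\exp\Big(s\log(ep/s) - \tfrac{n}{2}(1 - \sqrt s/\sqrt n - \sqrt v)^2_+\Big).
\end{align*}
For $v$ bounded away from $1$ and $n$ large, $(1 - \sqrt s/\sqrt n - \sqrt v)_+ \geq \kappa$ for some constant $\kappa = \kappa(\delta) > 0$, so $\mathbb{P}(V_s < v) \leq 2\exp(s\log(ep/s) - n\kappa^2/2) =: \epsilon_n$, a quantity independent of $v$ (in this range) that is $o(1)$. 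Then the tail part of the integral is at most $\epsilon_n \cdot \int_{(1-2\delta)^{-2r}}^\infty du$ — which diverges, so this crude bound is not quite enough near $v = 0$; I need the exponent $(1-\sqrt v)^2$ to grow as $v \downarrow 0$. Indeed for $v \leq 1/4$ we get $(1 - \sqrt s/\sqrt n - \sqrt v)_+^2 \geq (1/2 - \sqrt s/\sqrt n)^2 \geq 1/8$ for large $n$, giving a fixed exponential gain $2\exp(s\log(ep/s) - n/16)$; since this bound holds uniformly for all $v \leq 1/4$ and $\int_{4^r}^\infty du\cdot[\text{const}] $ still diverges, the right fix is to note $\mathbb{P}(V_s^{-r} > u) \le \mathbb{P}(V_s < u^{-1/r})$ and that for $u$ large, $u^{-1/r}$ is tiny, so I should instead use the sharper fact that $\sigma_{\min}(X_S) \ge 0$ always and bound $\mathbb{P}(\sigma_{\min}^2(X_S)/n < v)$ for tiny $v$ by noting it forces $\sqrt N - \sqrt n - t \le \sqrt{nv}$, i.e.\ $t \ge \sqrt n(1 - \sqrt s/\sqrt n - \sqrt v)$, yielding $\mathbb{P} \le 2e^{-n(1-\sqrt{s/n}-\sqrt v)^2/2}$; combined with the union bound and the substitution, $\int_0^\infty \mathbb{P}(V_s < u^{-1/r})du \le C + \int_C^\infty 2\exp(s\log(ep/s) - \tfrac n2(1 - \sqrt{s/n})^2(1-o(1)))du$ where the integrand is constant in $u$ over the relevant range — so in fact the cleanest route is to bound $\E V_s^{-r} \le C_0 + \E[V_s^{-r}\mathbbm{1}_{V_s < v_0}]$ for a small constant $v_0$, and on $\{V_s < v_0\}$ use $V_s^{-r} \le$ (deterministic bound via the fact that at least $V_s \ge \sigma_{\min}^2(X)/n$... ) — this is the subtle point, addressed below.

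\textbf{The main obstacle} is exactly this: a naive union bound gives $\mathbb{P}(V_s < v) \le \epsilon_n$ with $\epsilon_n \to 0$ but \emph{not decaying in $v$ as $v \to 0$}, so $\int_0^{v_0} v^{-r-1}\mathbb{P}(V_s<v)\,dv$-type expressions appear to diverge. The resolution is to keep the $v$-dependence in the exponent: from Lemma \ref{lem::eval-conc}, $\mathbb{P}(\sigma_{\min}(X_S)^2/n < v) \le 2\exp(-\tfrac n2(1-\sqrt{s/n}-\sqrt v)_+^2)$, and for $v \le v_0$ with $v_0, s/n$ small this exponent is $\ge \tfrac n2 c_1$ for a constant $c_1 > 0$; crucially, combine with $V_s \ge \sigma_{\min}^2(X_{[p]}\text{ over sparse support})$ but more usefully, observe that for $v < v_0$, $\mathbb{P}(V_s < v) \le \binom ps 2e^{-nc_1/2}$, and since the support of the integral $\int_0^{v_0} r v^{-r-1}\cdot[\text{this constant}]\,dv$ does diverge at $0$, we instead bound $\E[V_s^{-r}\mathbbm{1}_{V_s < v_0}] \le \E[V_s^{-r}\mathbbm{1}_{V_s < v_0}]$ using $V_s^{-r} \le (\text{smallest sparse eigenvalue})^{-r}$ and the fact that $V_s^{-1} \le n/\sigma_{\min}^2(\tilde X)$ for the $n\times n$ Gaussian submatrix... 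In short, the clean fix is: for tiny $v$ the deviation $t$ in Lemma \ref{lem::eval-conc} must be nearly $\sqrt n$, so $\mathbb{P}(V_s < v) \le \binom ps 2e^{-n/4}$ for all $v \le 1/8$ and large $n$; then $\E[V_s^{-r}\mathbbm 1_{V_s \le 1/8}] = \int_{8^r}^\infty \mathbb{P}(V_s^{-r} > u)du$, and since $\{V_s^{-r} > u\} = \{V_s < u^{-1/r}\}$ with $u^{-1/r} \le 1/8$, each such probability is $\le \binom ps 2 e^{-n(1-\sqrt{s/n}-u^{-1/(2r)})^2/2}$; for $u \ge 16^r$ this is $\le \binom ps 2e^{-n(1/2-\sqrt{s/n})^2/2}$, and importantly for \emph{very} large $u$, $(1 - u^{-1/(2r)})^2 \ge 1 - 2u^{-1/(2r)}$, so the exponent is $\ge \tfrac n2 - n u^{-1/(2r)} - \ldots$, and $\int^\infty n u^{-1/(2r)}$-corrections are integrable when combined with the overall $e^{-n/2}$ prefactor dominating $\binom ps = e^{o(n)}$. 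Making this last estimate rigorous — carefully tracking that $\binom ps e^{-n(1-\sqrt{s/n}-\sqrt v)^2/2}$ integrated against $v^{-r-1}\,dv$ over $(0, v_0)$ is $O(1)$, using $s\log(ep/s) = o(n)$ and monotonicity — is the one genuinely delicate computation; everything else (the union bound, Lemma \ref{lem::eval-conc}, Lemma \ref{lem:stirling}) is routine.
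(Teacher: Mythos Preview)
Your proposal has a genuine gap, and you have in fact identified it yourself without resolving it. The bound from Lemma~\ref{lem::eval-conc} gives
\[
\mathbb{P}\big(\sigma_{\min}^2(X_S)/n < v\big) \le 2\exp\Big(-\tfrac{n}{2}\big(1-\sqrt{s/n}-\sqrt{v}\big)_+^2\Big),
\]
and as $v\downarrow 0$ the exponent tends to the finite limit $-\tfrac{n}{2}(1-\sqrt{s/n})^2$. After the union bound you therefore only get $\mathbb{P}(V_s<v)\le C\exp(s\log(ep/s)-cn)$ \emph{uniformly} in small $v$, and $\int_0^{v_0} r v^{-r-1}\cdot[\text{constant}]\,dv=\infty$. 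Your attempted rescue, expanding $(1-u^{-1/(2r)})^2\ge 1-2u^{-1/(2r)}$, does not help: the integrand $\exp(-n/2+nu^{-1/(2r)})$ tends to the nonzero constant $e^{-n/2}$ as $u\to\infty$, so the tail integral still diverges. Lemma~\ref{lem::eval-conc} simply does not carry enough small-ball information; it controls deviations of $\sigma_{\min}$ from $\sqrt{n}-\sqrt{s}$, not the probability that $\sigma_{\min}$ is \emph{close to zero}.

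The missing idea, which the paper supplies, is a small-ball estimate that decays polynomially in $v$ with exponent proportional to $n$. For a \emph{fixed} unit vector $\Delta$, $\|X_S\Delta\|_2^2\sim\chi^2_n$, and Lemma~\ref{lem::chi-square-concentration} gives $\mathbb{P}(\|X_S\Delta\|_2^2/n\le 2v)\le e^{\frac{n}{2}(1-2v+\log(2v))}\asymp (2v)^{n/2}e^{n/2}$. To pass from fixed $\Delta$ to the infimum, the paper uses a $v$-dependent $\varepsilon$-net with $\varepsilon=v^2/2$ (not a fixed net), paying a factor $(3/\varepsilon)^s=6^s v^{-2s}$ plus a term controlling $\sigma_{\max}(X_S)$. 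The net result is a bound of order $\binom{p}{s}\,v^{n/2-2s}$, so that $\int_0^{v_0} v^{-r-1}\,\mathbb{P}(V_s<v)\,dv$ has integrand $\lesssim v^{n/2-2s-r-1}$, which is integrable once $n>4s+2r$. This polynomial-in-$v$ decay, coming from chi-square lower tails rather than singular-value concentration, is the ingredient your argument lacks.
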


\begin{lemma}\label{lem::gaussian-complexity}
Suppose the matrix $X\in\R^{n\times p}$ has i.i.d $\mathcal{N}(0,1)$ entries, and is independent of $z\sim \mathcal{N}(0, \sigma^2I_n)$. For $s\in \{1,\ldots,p\}$, let $T_{s}$ be defined as in \eqref{eq::v-term-def}. Then,
    \begin{equation*}
    \E \Big( \sup_{u\in T_s} z^{T}Xu/\sqrt{n} \Big)^{q} \leq c_q \Big(\sigma \sqrt{s \log (ep/s)}\Big)^q,  \quad \forall q \in [1,\infty),
    \end{equation*}
    for some constant $c_q>0$ that only depends on $q$.
\end{lemma}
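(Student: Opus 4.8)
The plan is to condition on the noise vector $z$ and exploit the rotational invariance of the Gaussian design. Since the columns $X_1,\dots,X_p$ of $X$ are i.i.d.\ $\mathcal{N}(0,I_n)$, conditionally on $z$ the coordinates $z^{T}X_1,\dots,z^{T}X_p$ of $X^{T}z$ are i.i.d.\ $\mathcal{N}(0,\|z\|_2^2)$; equivalently $X^{T}z=\|z\|_2\,g$ for a vector $g\sim\mathcal{N}(0,I_p)$ whose (conditional, hence unconditional) law does not depend on $z$, so $g$ is \emph{independent} of $z$. Because $z^{T}Xu=\langle X^{T}z,u\rangle$ and, by symmetry of $T_s$, the supremum of a linear functional over the unit-norm $s$-sparse vectors equals the $\ell_2$-norm of the $s$ largest-magnitude coordinates, one gets the exact identity
\[
\sup_{u\in T_s}\frac{z^{T}Xu}{\sqrt{n}}=\frac{\|z\|_2}{\sqrt{n}}\,\Big(\sum_{j=1}^{s}|g|_{(j)}^2\Big)^{1/2},
\]
where $|g|_{(1)}\ge\cdots\ge|g|_{(p)}$ are the order statistics of $(|g_1|,\dots,|g_p|)$. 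Raising to the $q$-th power and using independence of $z$ and $g$ factorizes the expectation as $\E\big[(\|z\|_2/\sqrt{n})^q\big]\cdot\E\big[(\sum_{j=1}^{s}|g|_{(j)}^2)^{q/2}\big]$, so it suffices to bound each factor by a $q$-only constant times the appropriate power.

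The first factor is routine: $\|z\|_2^2/\sigma^2\sim\chi^2_n$, and a standard $\chi^2$ moment bound (Jensen's inequality for $q\le 2$, log-convexity of $\Gamma$ for $q\ge 2$) gives $\E\big[(\|z\|_2/\sqrt{n})^q\big]\le C_q\,\sigma^q$ with $C_q$ independent of $n$. For the second factor I would invoke Lemma~\ref{lem::prop-E.1}: writing $W:=\tfrac1s\sum_{j=1}^s|g|_{(j)}^2$ and $L:=\log(2p/s)$ (note $L\ge\log 2$ since $s\le p$), that lemma bounds $\mathbb{P}(W> tL)$ by $(2p/s)^{1-3t/8}$, a tail that is trivial for moderate $t$ and decays like $2^{-3t/8}$ for large $t$. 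Integrating this tail against $\tfrac q2 x^{q/2-1}\,dx$ yields $\E[W^{q/2}]\le C_q\,L^{q/2}$, hence $\E\big[(\sum_{j=1}^{s}|g|_{(j)}^2)^{q/2}\big]=s^{q/2}\E[W^{q/2}]\le C_q\,(s\log(2p/s))^{q/2}\le C_q\,(s\log(ep/s))^{q/2}$. Multiplying the two bounds gives the claimed inequality.

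The main point requiring care — rather than a deep obstacle — is keeping $c_q$ genuinely free of $n$, $s$, and $p$. This is exactly why the argument is organized around the scale-free tail bound of Lemma~\ref{lem::prop-E.1} (whose threshold around $t=8/3$, together with $L\ge\log 2$, forces the tail integral to converge to a quantity depending only on $q$) and around a uniform-in-$n$ moment bound for $\chi^2_n$; a naive chaining or union-bound argument over $T_s$ would recover the right order in $s\log(ep/s)$ but with clumsier dependence on the other parameters. A secondary bookkeeping item is to justify the conditional identity carefully — in particular that $X^{T}z/\|z\|_2$ has the same standard Gaussian law for every $z\neq 0$, which is what makes $g$ independent of $z$ and lets the expectation factorize.
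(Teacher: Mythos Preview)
Your proposal is correct. Both your argument and the paper's begin with the same conditional factorization: writing $X^{T}z=\|z\|_2\,g$ with $g\sim\mathcal{N}(0,I_p)$ independent of $z$, so that the $q$-th moment splits as $\E\big[(\|z\|_2/\sqrt{n})^q\big]\cdot\E\big[(\sup_{u\in T_s}\langle g,u\rangle)^q\big]$.

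The difference is in how the second factor is controlled. The paper bounds the mean $\E\sup_{u\in T_s}\langle g,u\rangle$ by the Gaussian width of $T_s$ (which is $O(\sqrt{s\log(ep/s)})$), and then handles the fluctuation via Gaussian concentration for the $1$-Lipschitz map $g\mapsto\sup_{u\in T_s}\langle g,u\rangle$, combining the two by Minkowski. You instead exploit the exact identity $\sup_{u\in T_s}\langle g,u\rangle=(\sum_{j\le s}|g|_{(j)}^2)^{1/2}$ and integrate the tail of Lemma~\ref{lem::prop-E.1} directly. Both routes yield constants depending only on $q$. Your approach is more tailored to the specific structure of $T_s$ and reuses a lemma already present in the paper; the paper's route is more portable, since it only needs a Gaussian-width estimate and Lipschitz concentration, and would transfer immediately to other index sets. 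One small remark: your parenthetical that a ``naive chaining or union-bound argument\ldots\ would recover the right order\ldots\ but with clumsier dependence'' slightly undersells the alternative---the paper's Lipschitz-concentration argument is in fact just as clean in terms of the constants.
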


\begin{proof}[Proof of Lemma \ref{lem::inverse-min-eigenvalue}]

Throughout the proof, we fix $s\in\{1,\ldots,p\}$ and let $V:=V_{s}$ for notational simplicity. We have
\begin{equation}\label{eq::inverse-min-eigenvalue-expectation}
    \E \frac{1}{V^{r}} = \E \left(\frac{1}{V^{r}} \mathbbm{1}_{(V\leq x)} \right) + \E
    \left(\frac{1}{V^{r}} \mathbbm{1}_{(V>x)}\right),
\end{equation}
where we set\footnote{Many other choices of $x$ will work as well. We do not aim to optimize the constant.} $x= {\rm e}^{-8}$. It is clear that
\begin{equation}\label{eq:2ndterm:Vbound}
\E \left(\frac{1}{V^{r}} \mathbbm{1}_{(V>x)}\right)< \frac{1}{x^{r}}.
\end{equation}
Hence, in the rest of the proof, we aim to obtain an upper bound for $\E \left( \frac{1}{V^{r}} \mathbbm{1}_{(V\leq x)}\right)$. Towards this goal, we first bound $\mathbb{P}(V \leq 1-t)$. For $\forall t \in (0,1)$, using the union bound, we have
    \begin{align}
        \mathbb{P}(V\leq 1-t) &= \mathbb{P}\Big(\min_{S\subseteq [p]:|S|=s} \inf_{\|\Delta\|_{2}=1} \frac{1}{n}\|X_{S}\Delta\|_{2}^{2} \leq 1-t \Big)  \nonumber\\
        &\leq \binom{p}{s} \cdot \max_{S\subseteq [p]:|S|=s}\mathbb{P}\Big( \inf_{\|\Delta\|_{2}=1} \frac{1}{n}\|X_{S}\Delta\|_{2}^{2} \leq 1-t \Big). \label{V:major:eq}
    \end{align}
    We focus on $\mathbb{P}\Big( \inf_{\|\Delta\|_{2}=1} \frac{1}{n}\|X_{S}\Delta\|_{2}^{2} \leq 1-t \Big)$ for now. This bound needs to be sharp for small values of $1-t$ to help us bound $\mathbb{E} (\frac{1}{V^r} \mathbbm{1} (V \leq x))$.\footnote{Note that standard concentration bounds for the singular values of Gaussian matrices are not sharp enough. } Define the set
    \[
    \mathcal{S}^{s-1} := \{ \Delta \in \mathbb{R}^s : \|\Delta\|_2=1 \}.
    \]
   We discretize the set $\mathcal{S}^{s-1}$ using an $\varepsilon$-net  and write the union bound over the net in the following way. Let $\mathcal{N}(\varepsilon)$ denote the $\varepsilon$-net of $\mathcal{S}^{s-1}$. Then for $\forall \Delta \in \mathcal{S}^{s-1}$, there exists a $\Delta' \in \mathcal{N}(\varepsilon)$ such that $\|\Delta - \Delta'\|_{2} \leq \varepsilon$ and
    \begin{align}\label{eq::epsilon-net-ineq}
        &~\|X_{S}\Delta\|_{2}^{2} \nonumber \\
        =&~ \|X_{S}\Delta'\|_{2}^{2} + \langle X_{S}(\Delta - \Delta'), X_{S}(\Delta+\Delta') \rangle \nonumber \\ 
        \geq&~\inf_{\Delta \in \mathcal{N}(\varepsilon)} \|X_{S}\Delta\|_{2}^{2} - \|\Delta- \Delta'\|_2 \|\Delta+ \Delta'\|_2 \Big\langle X_{S} \frac{(\Delta - \Delta')}{\|\Delta-\Delta'\|_2}, X_{S}\frac{(\Delta+\Delta')}{\|\Delta + \Delta'\|_2} \Big\rangle \nonumber \\   
        \geq &~ \inf_{\Delta \in \mathcal{N}(\varepsilon)} \|X_{S}\Delta\|_{2}^{2} - 2\varepsilon\sigma^2_{\max}(X_S),
    \end{align}
    where to obtain the last inequality, we have used the fact that $\|\Delta-\Delta'\|_2 \leq \varepsilon, \|\Delta+ \Delta'\|_2 \leq 2$ and the Cauchy-Schwartz inequality. Define the event
    \[
    \mathcal{D}:= \Big\{\frac{1}{n}\sigma^2_{\max}(X_S) \leq \frac{1}{1-t} \Big\},
    \]
    and let $2\varepsilon = (1-t)^{2}$. We use \eqref{eq::epsilon-net-ineq} to have
    \begin{align}\label{eq:upper:for:lowest}
         &\mathbb{P}\Big( \inf_{\Delta \in \mathcal{S}^{s-1}} \frac{1}{n}\|X_{S}\Delta\|_{2}^{2} \leq 1-t \Big)\nonumber \\
        \leq&~ \mathbb{P}\Big( \inf_{\Delta \in \mathcal{N}(\varepsilon)} \frac{1}{n}\|X_{S}\Delta\|_{2}^{2} \leq 2(1-t),~\mathcal{D} \Big) + \mathbb{P}(\mathcal{D}^{c}) \nonumber \\
        \leq&~ \frac{6^{s}}{(1-t)^{2s}}\cdot \max_{\Delta\in \mathcal{N}(\varepsilon)} \mathbb{P} \Big(\frac{1}{n}\|X_{S}\Delta\|_{2}^{2} \leq 2(1-t) \Big) + \mathbb{P}(\mathcal{D}^{c}),
    \end{align}
    where the last inequality uses the union bound and the result $|\mathcal{N}(\varepsilon)| \leq (3/\varepsilon)^{s}$ from Lemma \ref{lem::covering-number}. Our next step is to bound the following two quantities from \eqref{eq:upper:for:lowest}: 

\begin{itemize}

\item $\mathbb{P}\Big(\frac{1}{n}\|X_{S}\Delta\|_{2}^{2} \leq 2(1-t) \Big)$: Since $S$ is a fixed set and $\Delta$ is a fixed unit-norm vector, we know $\|X_{S}\Delta\|_{2}^{2} \sim \chi_{n}^{2}$. Applying Lemma \ref{lem::chi-square-concentration} gives $\forall t \in (1/2, 1)$,
\begin{equation}\label{p:xdelta:lower1}
\mathbb{P}\Big(\frac{1}{n}\|X_{S}\Delta\|_{2}^{2} \leq 2-2t\Big) \leq \exp\Big[ \frac{n}{2}\Big( 2t -1 + \log(2-2t) \Big) \Big].
\end{equation}

\item $\mathbb{P}(\mathcal{D}^{c})$: A direct use of Lemma \ref{lem::eval-conc} yields: $\forall  t\in (1-(\sqrt{s/n}+1)^{-2}, 1)$,
\begin{align}
\label{D:com:eq}
\mathbb{P}(\mathcal{D}^{c})&=\mathbb{P}\Big(\sigma_{\max}(X_S)\geq \sqrt{\frac{n}{1-t}}\Big) \nonumber \\
&\leq 2\exp\Big(-\frac{1}{2}\Big(((1-t)^{-1/2}-1)\sqrt{n}-\sqrt{s}\Big)^2\Big).
\end{align}

\end{itemize}
    
We now use the bounds \eqref{V:major:eq} and \eqref{eq:upper:for:lowest}-\eqref{D:com:eq} to obtain an upper bound for $\E \left(\frac{1}{V^{r}} \mathbbm{1}_{(V\leq x)} \right)$. First, the $r$th moment of $\frac{1}{V} \mathbbm{1}_{(V\leq x)}$ can be obtained via its tails:
\begin{align*}
&\E \left(\frac{1}{V^{r}} \mathbbm{1}_{(V\leq x)} \right)=\int_0^{\infty}ru^{r-1}\mathbb{P}\Big(\frac{1}{V} \mathbbm{1}_{(V\leq x)}> u\Big)du \\
=&~\int_0^{1/x}ru^{r-1}\mathbb{P}\Big(\frac{1}{V} \mathbbm{1}_{(V\leq x)}> u\Big)du+ \int_{1/x}^{\infty}ru^{r-1}\mathbb{P}\Big(\frac{1}{V} \mathbbm{1}_{(V\leq x)}> u\Big)du \\
=&~x^{-r}\mathbb{P}\big(V\leq x\big)+\int_{1-x}^1r(1-t)^{-r-1}\mathbb{P}(V< 1-t)dt,
\end{align*}
where we use a change of variable $t=1-u^{-1}$ in the last equality. Plugging the bounds of \eqref{V:major:eq}, \eqref{eq:upper:for:lowest}-\eqref{D:com:eq} into the above integral, we have
\begin{align}
\E \left(\frac{1}{V^{r}} \mathbbm{1}_{(V\leq x)} \right)\leq &~x^{-r}\mathbb{P}\big(V\leq x\big)\nonumber\\
&~+ r\binom{p}{s}6^s\int_{1-x}^1(1-t)^{-r-1-2s}e^{\frac{n}{2}\big( 2t -1 + \log(2-2t) \big)}dt \nonumber \\
&~+ 2r\binom{p}{s}\int_{1-x}^1(1-t)^{-r-1}e^{-\frac{1}{2}\big(((1-t)^{-1/2}-1)\sqrt{n}-\sqrt{s}\big)^2}dt \nonumber \\
:=&~x^{-r}\mathbb{P}\big(V\leq x\big)+I_1+I_2. \label{three:terms}
\end{align}
Note that \eqref{p:xdelta:lower1} and \eqref{D:com:eq} can be applied here, because $x<\frac{1}{2}\wedge (\sqrt{s/n}+1)^{-2}$ for $x=e^{-8}$ and $s\leq n$.

For the term $I_1$, we can bound as follows: 
\begin{align}
    I_{1} &= r\binom{p}{s}6^s \int_{1-x}^1(1-t)^{\frac{n}{2}-r-1-2s}e^{\frac{n}{2}\big(2t -1 +\log2\big)} dt \nonumber\\
    & \leq r\binom{p}{s} 6^{s}  e^{\frac{n}{2}(1+\log2)} \int_{1-x}^{1} (1-t)^{\frac{n}{2}-r-1-2s} dt \nonumber\\
    & = r\binom{p}{s} 6^{s} e^{\frac{n}{2}(1+\log 2)} \frac{x^{\frac{n}{2}-r-2s}}{\frac{n}{2}-r-2s} \nonumber \\
    & \leq \frac{r}{\frac{n}{2}-r-2s}\exp\Big(s\log(6ep/s)-(3.5-0.5\log2)n+8r+16s\Big)=o(1),
\label{eq::I_1-integration}
  \end{align}
where we used Lemma \ref{lem:stirling} in the last inequality, and the last equality can be easily verified under the scaling condition $(s\log (ep/s))/n\rightarrow 0$.

Regarding the term $I_2$, we have
\begin{align}
\label{I2:bound}
    I_{2} = &~2r\binom{p}{s}\int_{1-x}^1\exp\Big((r+1)\log\frac{1}{1-t}-\frac{1}{2}\big(((1-t)^{-1/2}-1)\sqrt{n}-\sqrt{s}\big)^2\Big)dt \nonumber \\
    \leq &~2r\binom{p}{s}\int_{1-x}^1\exp\Big(\frac{-n/8+r+1}{1-t}\Big)dt\leq 2r\binom{p}{s}e^{-e^8(n/8-r-1)-8} \nonumber \\
    \leq &~2r\exp\Big(s\log(ep/s)-e^8(n/8-r-1)-8\Big)=o(1).
\end{align}
Here, the first inequality uses the fact that when $n\geq s$ and $0\leq1-t\leq x=e^{-8}$, it holds that $\big(((1-t)^{-1/2}-1)\sqrt{n}-\sqrt{s}\big)^2\geq \frac{n}{4(1-t)}$ and $\log\frac{1}{1-t}\leq \frac{1}{1-t}$; the second inequality holds by replacing $t$ with $1-x$ in the integrand; the third inequality uses Lemma \ref{lem:stirling}; and the last equality is seen under the scaling condition $(s\log (ep/s))/n\rightarrow 0$.

Putting together \eqref{three:terms}-\eqref{I2:bound} completes the proof. 

\end{proof}

\begin{proof}[Proof of Lemma \ref{lem::gaussian-complexity}]

Given that $X$ and $z/\sigma$ have independent $\mathcal{N}(0,1)$ entries, with a conditioning (on $z$) argument, we can obtain
\begin{align}
\label{basic:form:eq}
    \E \Big( \sup_{u\in T_s} z^{T}Xu/\sqrt{n} \Big)^{q} =\mathbb{E}\Big(\frac{\|z\|_2}{\sqrt{n}}\Big)^q \cdot \mathbb{E}\Big(\sup_{u\in T_s}\langle g, u\rangle\Big)^q, \quad g\in \mathcal{N}(0,I_p).
\end{align}

We first construct an upper bound for $\E\big( \sup_{u\in T_s} \langle g, u\rangle \big)^{q} $. Using Minkowski's inequality,
    \begin{equation}\label{eq::moments-of-gaussian-complexity}
        \bigg[ \E \Big( \sup_{u\in T_s} ~ \langle g, u\rangle \Big)^{q} \bigg]^{1/q} \leq \bigg( \E \Big| \sup_{u\in T_s} \langle g, u\rangle - \E \sup_{u\in T_s} ~ \langle g, u \rangle \Big|^{q} \bigg)^{1/q} + \E \sup_{u\in T_s}~ \langle g, u \rangle .
    \end{equation}
    The second term above is the Gaussian complexity of $T_s$, and it has a sharp upper bound (e.g. Exercise 5.7 in \cite{wainwright2019high}), 
    \begin{equation}\label{eq::gaussian-complexity}
        \E \sup_{u\in T_s} \langle g, u\rangle \leq C\sqrt{ s \log (ep/s)},
    \end{equation}
for some absolute constant $C>0$.
    To bound the first term in \eqref{eq::moments-of-gaussian-complexity}, let $F(g):= \sup_{u\in T_s}\langle g, u\rangle$. Then, it is clear that $F(\cdot)$ is a $1$-Lipschitz function. Using the concentration of Lipschitz function of Gaussians (e.g., Theorem 2.26 in \cite{wainwright2019high}), we obtain
    \begin{eqnarray}
        && \E ~\Big| \sup_{u\in T(s)}  \langle g, u\rangle - \E \sup_{u\in T(s)}  \langle g, u \rangle \Big|^{q} = \int_{0}^{\infty} qt^{q-1}P\Big( |F(g)-\E F(g)| > t \Big) dt \nonumber \\
        &\leq& \int_{0}^{\infty} 2 qt^{q-1} e^{-\frac{t^{2}}{2}} d t  = 2^{\frac{q}{2}}q\Gamma(\frac{q}{2}), \label{eq::concentration-of-gaussian-complexity}
    \end{eqnarray}
    where $\Gamma(\cdot)$ is the Gamma function. Putting together \eqref{eq::moments-of-gaussian-complexity}-\eqref{eq::concentration-of-gaussian-complexity} gives us
    \begin{align}
    \label{term2:nonasym}
     \E \Big( \sup_{u\in T_s} ~ \langle g, u\rangle \Big)^{q}\leq C_q \cdot (\sqrt{s\log(ep/s)})^q,
    \end{align}
    for some constant $C_q>0$ only depending on $q$. Finally, note that $\frac{\|z\|_2}{\sqrt{n}}$, as a function of the standard Gaussian $z/\sigma$, is a $(\frac{\sigma}{\sqrt{n}})$-Lipschitz function. Hence, we can use similar arguments to derive the bound for $\mathbb{E}\Big(\frac{\|z\|_2}{\sqrt{n}}\Big)^q$: there exists some constant $\tilde{C}_q>0$ only depending on $q$ such that
\begin{align}
\label{term1:nonasym}
\mathbb{E}\Big(\frac{\|z\|_2}{\sqrt{n}}\Big)^q\leq \tilde{C}_q \sigma^q.
\end{align}
Combining \eqref{basic:form:eq}, \eqref{term2:nonasym} and \eqref{term1:nonasym} finishes the proof.

\end{proof}

\subsection{Proof of Proposition \ref{prop:lasso:denoiser}}\label{ssec:proof:prop5}

 We first state some useful inequalities in the lemma below.
\begin{lemma}
\label{useful:basic:eq}
Under $y=X\beta+z$, the Lasso estimator $\betaL$ in \eqref{eq::lasso-estimator0} satisfies 
\begin{itemize}
\item[(i)] $\frac{1}{n}\|X(\betaL-\beta)\|_2^2\leq \frac{1}{n}z^TX(\betaL-\beta)-\lambda\|\betaL_{S^c}-\beta_{S^c}\|_1-\lambda(\betaL_S-\beta_S)^T{\rm sign}(\beta_S)$, where $S={\rm supp}(\beta)$.
\item[(ii)] $\frac{1}{n}\|X(\betaL-\beta)\|_2^2\leq \|\betaL-\beta\|_2\sqrt{\sum_{j\in S}(g_j-\lambda {\rm sign}(\beta_j))^2+\sum_{j\in S^c}(|g_j|-\lambda)_+^2}$, where $g_j=\frac{1}{n}X_j^Tz$.
\end{itemize}
\end{lemma}
\begin{proof}
Applying the KKT condition to \eqref{eq::lasso-estimator0} gives
\begin{align*}
\frac{1}{n}X^T(y-X\betaL)=\lambda d, {\rm~where~}d_j={\rm sign}(\betaL_j) {\rm~if~} \betaL_j\neq 0 {\rm~and~} |d_j|\leq 1 {\rm~if~}\betaL_j= 0.
\end{align*}
Plugging in $y=X\beta+z$ and multiplying both sides above by $\beta-\betaL$ yields
\begin{align*}
\frac{1}{n}\|X(\betaL-\beta)\|_2^2&=\frac{1}{n}(\betaL-\beta)^TX^Tz-\lambda(\betaL-\beta)^Td \\
&=\frac{1}{n}(\betaL-\beta)^TX^Tz-\lambda(\betaL_{S^c}-\beta_{S^c})^Td_{S^c}-\lambda(\betaL_S-\beta_S)^Td_S.
\end{align*}
Part (i) is proved by further noting
\begin{align*}
&(\betaL_{S^c}-\beta_{S^c})^Td_{S^c}=(\betaL_{S^c})^Td_{S^c}=\|\betaL_{S^c}\|_1=\|\betaL_{S^c}-\beta_{S^c}\|_1, \\
& (\betaL_S)^Td_S=\|\betaL_S\|_1\geq (\betaL_S)^T{\rm sign}(\beta_S),~\beta_S^Td_S\leq \|\beta_S\|_1=(\beta_S)^T{\rm sign}(\beta_S).
\end{align*}
Regarding Part (ii), let $u=\betaL-\beta$. The bound from Part (i) can be rewritten as
\begin{align*}
\frac{1}{n}\|X(\betaL-\beta)\|_2^2&\leq \sum_{j\in S}\big(g_j-\lambda{\rm sign}(\beta_j)\big)u_j +\sum_{j\in S^c}(g_ju_j-\lambda|u_j|) \\
&\leq \sum_{j\in S}\big(g_j-\lambda{\rm sign}(\beta_j)\big)u_j+\sum_{j\in S^c}(|g_j|-\lambda)_+|u_j| \\
&\leq \|u\|_2\cdot \sqrt{\sum_{j\in S}(g_j-\lambda {\rm sign}(\beta_j))^2+\sum_{j\in S^c}(|g_j|-\lambda)_+^2},
\end{align*}
where the last inequality is by Cauchy–Schwarz inequality.
\end{proof}

For a given $u=(u_{1},\ldots,u_{p})\in\R^{p}$, let $|u|_{(1)}\geq |u|_{(2)} \geq \cdots \geq |u|_{(p)}$ denote the order statistics of $\big(|u_{1}|, \ldots, |u_{p}|\big)$, and define
    \begin{align}
       & H(u):= \sigma(1+\delta_2) \Bigg(\sum_{j=1}^{k} |u|_{(j)} 4\sqrt{\frac{\log(2p/j)}{n}} + (1+\delta_1) \sum_{j=k+1}^{p} |u|_{(j)}\sqrt{\frac{2\log(p/k)}{n}}\Bigg), \label{eq::H-G-function1}\\
       & G(u):= \sigma(1+\delta_2) \delta_2^{-1} \frac{\sqrt{2\log(1/\delta_3)}}{n(1+\delta_0)} \|Xu\|_{2}, \label{eq::H-G-function2}
    \end{align}
where $\delta_0, \delta_1,\delta_2,\delta_3 \in (0,1)$ are some constants. The following lemma shows the importance of the two terms defined above. 

\begin{lemma}[Bound on the stochastic error]\label{lem::bound-on-stochastic-error}
    Let $z\sim \mathcal{N}(0,\sigma^2I_{n})$ and $X\in \R^{n\times p}$ be a fixed matrix such that $\max_{j\in [p]} \|X_{j}\|_{2} \leq (1+\delta_{0})\sqrt{n}$. For any given constants $\delta_0,\delta_1,\delta_2,\delta_3\in (0,1)$, there exist a constant $C_{\delta_1}>0$ only depending on $\delta_1$ such that the event
\begin{equation}
    \Big \{\frac{1}{n} z^{T}Xu \leq (1+\delta_{0})\cdot \max\big ( H(u), G(u) \big), ~ \forall u \in \R^{p} \Big\} \label{eq::stochastic-error-event}
\end{equation}
holds with probability at least $1-\delta_3$, as long as $p/k \geq C_{\delta_1}$.
\end{lemma}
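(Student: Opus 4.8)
The plan is to establish this as a quantitatively refined version of the stochastic-error bound that powers the Lasso oracle inequalities of \cite{bellec2018slope}: conceptually the argument is theirs, but every constant has to be tracked so that the prefactor of $\max(H,G)$ is exactly $1+\delta_0$ and the slacks $\delta_1,\delta_2$ appear exactly where $H$ and $G$ place them. The first step is to rescale the noise. Put $g_j := X_j^{T}z/\bigl(\sigma(1+\delta_0)\sqrt n\bigr)$; the hypothesis $\max_j\|X_j\|_2\le(1+\delta_0)\sqrt n$ makes $(g_1,\dots,g_p)$ a centered, jointly Gaussian vector with all marginal variances at most $1$, and $\tfrac1n z^{T}Xu=\tfrac{\sigma(1+\delta_0)}{\sqrt n}\langle g,u\rangle$, so it suffices to bound $\langle g,u\rangle$ uniformly in $u$.

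Second, I would use the elementary implication ``$a\le a_0+b_0 \Rightarrow a\le(1+\delta_2)\max(a_0,\delta_2^{-1}b_0)$'' for nonnegative reals to reduce the claim to the single pointwise estimate $\tfrac1n z^{T}Xu\le(1+\delta_0)\bigl(H_0(u)+G_0(u)\bigr)$ holding on one event of probability at least $1-\delta_3$, where $H_0$ and $G_0$ are $H$ and $G$ with the common factor $(1+\delta_2)$ removed (and, for $G_0$, also the $\delta_2^{-1}$). For a fixed $u$, split along the set $S_1$ of its $k$ largest coordinates in absolute value. On $S_1$ apply the rearrangement inequality and Abel summation, $\sum_{j\in S_1}|g_j||u_j|\le\sum_{m=1}^{k}|u|_{(m)}|g|_{(m)}\le 4\sum_{m=1}^{k}|u|_{(m)}\sqrt{\log(2p/m)}$, where the last step needs the partial-sum bounds $\sum_{m=1}^{\ell}|g|_{(m)}\le 4\sum_{m=1}^{\ell}\sqrt{\log(2p/m)}$ for every $\ell\le k$ — bounds of exactly the type proved in Lemma \ref{lem::median-inequality} (via a dyadic union bound built on Lemmas \ref{lem::prop-E.1} and \ref{lem::Gaussian-maxima}), and it is here that the requirement $p/k\ge C_{\delta_1}$ is used. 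On the complement $S_1^c$, threshold at $\tau:=(1+\delta_1)\sqrt{2\log(p/k)}$: coordinates with $|g_j|\le\tau$ contribute at most $\tau\sum_{m>k}|u|_{(m)}$, which is the second term of $H_0$, while the overflow coordinates $\{j\notin S_1:|g_j|>\tau\}$ are what the term $G_0(u)\propto \tfrac{\sqrt{2\log(1/\delta_3)}}{n(1+\delta_0)}\|Xu\|_2$ must absorb.

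The delicate part will be this overflow contribution, together with the constant bookkeeping. A crude Cauchy--Schwarz on the overflow coordinates is too lossy (it produces a $\sqrt n$ where one needs only $\sqrt{2\log(1/\delta_3)}$), so one has to run the peeling/chaining scheme of \cite{bellec2018slope}: subdivide these coordinates into dyadic bands according to the size of $|g_j|$, in each band pair a Cauchy--Schwarz against $\|Xu\|_2$ with a $\chi^2$-tail (Lemma \ref{lem::chi-square-concentration}, Lemma \ref{lem::eval-conc}) and an $\epsilon$-net over the relevant directions $Xu$, and sum the geometric series — the $\delta_2^{-1}$ built into $G$ supplying exactly the room for the per-direction Gaussian tail to beat the net cardinality. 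Keeping every estimate at the sharp constants ($1+\delta_0$ out front, $1+\delta_1$ on the tail threshold, $4\sqrt{\log(2p/m)}$ on the head), rather than at generic multiples, is the entire substance of the refinement over \cite{bellec2018slope}; the dependence among the $g_j$ also has to be carried throughout, which is why the argument leans on tools that do not require independence (Gaussian-maximum concentration, $\chi^2$-tails for projections). Finally, one assembles the head, the sub-threshold tail, and the overflow into the dichotomy, and checks that the discarded probabilities sum to at most $\delta_3$ once $C_{\delta_1}$ is taken large enough to meet every ``$p/k$ large'' requirement that surfaces (all of which depend only on $\delta_1$).
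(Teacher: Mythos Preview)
Your reduction to an additive target $\tfrac1n z^TXu\le(1+\delta_0)(H_0(u)+G_0(u))$ via the implication $a\le a_0+b_0\Rightarrow a\le(1+\delta_2)\max(a_0,\delta_2^{-1}b_0)$ is valid, and your head term is fine (though Abel summation is unnecessary: Lemma~\ref{lem::median-inequality} already gives the \emph{pointwise} bounds $|g|_{(j)}\le 4\sqrt{\log(2p/j)}$ for $j\le k$, and the $p/k\ge C_{\delta_1}$ requirement is actually used for the tail bound $|g|_{(k+1)}\le(1+\delta_1)\sqrt{2\log(p/k)}$, not the head). The genuine gap is the overflow term. You want
\[
\sum_{j\notin S_1,\ |g_j|>\tau}|g_j|\,|u_j|\ \lesssim\ \frac{\sqrt{2\log(1/\delta_3)}}{\sqrt n}\,\|Xu\|_2
\]
uniformly in $u$ on an event of probability $\ge 1-\delta_3$, but nothing in the hypotheses connects a coordinate-wise sum in $u$ to $\|Xu\|_2$. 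Your peeling/chaining sketch does not explain how the $u$-dependence jumps from $\sum_j|u_j|$ to $\|Xu\|_2$; an $\epsilon$-net over ``directions $Xu$'' has cardinality exponential in $n$, which a single factor $\sqrt{2\log(1/\delta_3)}$ cannot pay for. Moreover, because $S_1$ indexes the large $|u_j|$ rather than the large $|g_j|$, the overflow set can contain up to $k$ indices even on the good event $|g|_{(k+1)}\le\tau$, so the overflow is not a rare-event residual at all.

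The paper (following \cite{bellec2018slope}) never isolates an overflow. After rearrangement it factors
\[
\tfrac1n z^TXu \ \le\ \frac{(1+\delta_0)H(u)}{1+\delta_2}\cdot\Bigl(\max_{j\le k}\tfrac{|g|_{(j)}}{4\sqrt{\log(2p/j)}}\ \vee\ \tfrac{|g|_{(k+1)}}{(1+\delta_1)\sqrt{2\log(p/k)}}\Bigr),
\]
uses Lemma~\ref{lem::median-inequality} to show the random factor is $\le 1$ with probability $>1/2$, and then applies \emph{Gaussian concentration around the median} to the Lipschitz map $z\mapsto\sup_{u\in\mathcal T}\tfrac1n z^TXu$ on the homogeneity-normalized set $\mathcal T=\{u:\max(H(u),G(u))\le\tfrac{1+\delta_2}{1+\delta_0}\}$. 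The Lipschitz constant (in the standard Gaussian $z/\sigma$) is $\tfrac{\sigma}{n}\sup_{u\in\mathcal T}\|Xu\|_2$, and the constraint $G(u)\le\tfrac{1+\delta_2}{1+\delta_0}$ bounds this by exactly $\delta_2/\sqrt{2\log(1/\delta_3)}$, so a deviation of $\sqrt{2\log(1/\delta_3)}$ Lipschitz units costs probability $\delta_3$ and moves the supremum by at most $\delta_2$; homogeneity then extends the bound to all of $\R^p$. In other words, $G(u)$ is not there to absorb overflow coordinates---it encodes the Lipschitz constant of the supremum, and this concentration-around-the-median step is the idea your proposal is missing.
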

\begin{proof}
The lemma is a modified version of Theorem 4.1 in \cite{bellec2018slope}, tailored for the smaller tuning parameter value $\lambda_{\varepsilon}$. The proof is similar, hence we do not repeat all the details and only mention the major difference in bounding $\frac{1}{n}z^TXu$. Let $g_j=\frac{z^TX_j}{\sigma\sqrt{n}(1+\delta_0)}, j=1,2,\ldots, p$. We have
\begin{align}
&\quad ~\frac{1}{n}z^TXu \leq (1+\delta_0)\sigma \sum_{j=1}^p \frac{1}{\sqrt{n}}|g|_{(j)}|u|_{(j)} \nonumber \\
& \leq (1+\delta_0)\sigma \sum_{j=1}^k\Big(|u|_{(j)} 4\sqrt{\frac{\log(2p/j)}{n}}\Big)\cdot \Big(\frac{|g|_{(j)}}{4\sqrt{\log(2p/j)}}\Big) \nonumber \\
&~~+(1+\delta_0)\sigma\sum_{j=k+1}^p\Big((1+\delta_1)|u|_{(j)}\sqrt{\frac{2\log(p/k)}{n}}\Big)\cdot\Big(\frac{|g|_{(j)}}{(1+\delta_1)\sqrt{2\log(p/k)}}\Big) \nonumber \\
&\leq \frac{(1+\delta_0)H(u)}{1+\delta_2} \cdot \Big( \max_{1\leq j\leq k} \frac{|g|_{(j)}}{4\sqrt{\log(2p/j)}} \Big) \vee \frac{|g|_{(k+1)}}{(1+\delta_1)\sqrt{2\log(p/k)}}. \label{med:for:use}
\end{align}
Define 
\[
\mathcal{T}=\Big\{u\in \mathbb{R}^p: \max\big(H(u),G(u)\big)\leq \frac{1+\delta_2}{1+\delta_0} \Big\}.
\]
The rest is to bound $\max_{u\in\mathcal{T}}\frac{1}{n}z^TXu$, using concentration of Gaussian measure around the median and using \eqref{med:for:use} together with Lemma \ref{lem::median-inequality} to bound the median. The detail is similar to that in \cite{bellec2018slope} and is hence skipped.
\end{proof}

We use the basic inequality from Part (i) of Lemma \ref{useful:basic:eq} as the starting point, and apply Lemma \ref{lem::bound-on-stochastic-error} to control the noise-related term. Along the way, the inequality from Part (ii) of Lemma \ref{useful:basic:eq} is employed to obtain the sharp constant. We provide the details in the following lemma and its proof.

{\color{black}
\begin{lemma}\label{lem::lasso-l2-bound}
Assume $p\geq 2k$. For any $\delta_0,\delta_1,\delta_2,\delta_3\in (0,1)$ satisfying $(1+\varepsilon)-(1+\delta_0)(1+\delta_1)(1+\delta_2)>0$, define
\begin{align}
\label{c0:exact:form}
c_0=\frac{4\sqrt{2}(1+\delta_0)(1+\delta_2)+1+\varepsilon}{(1+\varepsilon)-(1+\delta_0)(1+\delta_1)(1+\delta_2)}.
\end{align}
Consider model \eqref{model::gaussian-model} with any fixed design $X \in \mathcal{A}(\delta_0, c_0, k)$ in \eqref{eq::event-A}. Then, on the event \eqref{eq::stochastic-error-event}, the Lasso estimator $\betaL$ in \eqref{eq::lasso-estimator0} with tuning parameter $\lambda_{\varepsilon}$ satisfies 
\begin{align*}
\|\betaL-\beta\|_2^2\leq &~ \frac{1}{(1-\delta_0)^4}\Bigg(\sum_{j\in S}(g_j-\lambda_{\varepsilon} {\rm sign}(\beta_j))^2+\sum_{j\in S^c}(|g_j|-\lambda_{\varepsilon})_+^2\Bigg)\\
&+ C^2(\delta_0,\delta_2, \varepsilon) \frac{\sigma^2(\log(1/\delta_3))^2}{nk\log(p/k)},
\end{align*}
where  $g_j=\frac{1}{n}X_j^Tz, S={\rm supp}(\beta)$ and
\begin{align*}
C(\delta_0,\delta_2, \varepsilon):=\frac{4\sqrt{2}(1+\delta_0)(1+\delta_2)+1+\varepsilon}{16\sqrt{2}(1+\delta_0)^2\delta_2^2}.
\end{align*}
\end{lemma}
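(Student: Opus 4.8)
The plan is to follow, with refined constants, the route of Theorem~4.2 in \cite{bellec2018slope}, which Lemma~\ref{lem::bound-on-stochastic-error} was built to feed into. Write $u=\betaL-\beta$ and $S=\supp(\beta)$ with $|S|\le k$. From the fact that the Lasso objective at $\betaL$ does not exceed its value at $\beta$, together with $y=X\beta+z$ and the elementary bound $\|\beta\|_1-\|\betaL\|_1\le\|u_S\|_1-\|u_{S^c}\|_1$, one obtains the basic inequality
\[
\tfrac{1}{2n}\|Xu\|_2^2+\lambda_{\varepsilon}\|u_{S^c}\|_1\le\tfrac1n z^{T}Xu+\lambda_{\varepsilon}\|u_S\|_1 .
\]
On the event \eqref{eq::stochastic-error-event} we may replace $\tfrac1n z^{T}Xu$ by $(1+\delta_0)\max(H(u),G(u))$, and then proceed in two stages: (i) show that $u$ lies in the cone $\mathcal{C}_{SRE}(k,c_0)$; (ii) invoke the $SRE(k,c_0)$ property guaranteed by $X\in\mathcal{A}(\delta_0,c_0,k)$ to close a one-dimensional inequality for $\|u\|_2$.

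For stage (i), split $H(u)$ into its head $\sum_{j\le k}|u|_{(j)}4\sqrt{\log(2p/j)/n}$ and its tail $(1+\delta_1)\sum_{j>k}|u|_{(j)}\sqrt{2\log(p/k)/n}$. Since the sum of the $p-k$ smallest coordinates of $u$ is at most $\|u_{S^c}\|_1$, the $(1+\delta_0)$-times-tail term is at most $\tfrac{(1+\delta_0)(1+\delta_1)(1+\delta_2)}{1+\varepsilon}\lambda_{\varepsilon}\|u_{S^c}\|_1$, which the hypothesis $(1+\delta_0)(1+\delta_1)(1+\delta_2)<1+\varepsilon$ lets us absorb into the left-hand $\lambda_{\varepsilon}\|u_{S^c}\|_1$. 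By Cauchy--Schwarz the head term is at most $4(1+\delta_0)(1+\delta_2)\sigma\sqrt{k\log(2ep/k)/n}\,\|u\|_2$, and $\lambda_{\varepsilon}\|u_S\|_1\le\lambda_{\varepsilon}\sqrt k\,\|u\|_2$; the $G(u)$ alternative is treated in the same spirit, using that $G(u)$ is a multiple of $\|Xu\|_2$ and that $G(u)>H(u)$ controls $\|u_S\|_1$ and $\|u_{S^c}\|_1$. Dropping the nonnegative $\|Xu\|_2^2$ and dividing by $\|u\|_2$ yields $\|u_{S^c}\|_1\le c_0\sqrt k\,\|u\|_2$ with exactly the $c_0$ of \eqref{c0:exact:form} (up to the bounded factor $\sqrt{\log(2ep/k)/\log(p/k)}$, which tends to $1$ in the relevant regime), so that $u\in\mathcal{C}_{SRE}(k,c_0)$.

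For stage (ii), $X\in\mathcal{A}(\delta_0,c_0,k)$ gives $\|Xu\|_2\ge\sqrt n\,\theta(k,c_0)\|u\|_2\ge\sqrt n(1-\delta_0)\|u\|_2$. Substituting this back into the basic inequality, together with the head/tail bounds and a Young-type step bounding the $G(u)$-contribution by a small fraction of $\tfrac1n\|Xu\|_2^2$ plus a term $B$ that is a constant (depending on $\delta_2$) times $\sigma^2\log(1/\delta_3)/n$, reduces everything to $\tfrac{(1-\delta_0)^2}{2}\|u\|_2^2\le A\|u\|_2+B$, where $A$ is the sum of the head term's $\|u\|_2$-coefficient (of size $\propto(1+\delta_0)(1+\delta_2)$) and of $\lambda_{\varepsilon}\sqrt k$ (of size $\propto 1+\varepsilon$). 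Solving the quadratic gives $\|u\|_2\le\tfrac{2A}{(1-\delta_0)^2}+\sqrt{\tfrac{2B}{(1-\delta_0)^2}}$; expanding $A$ produces (asymptotically) the $C_1\sigma\sqrt{k\log(p/k)/n}$ term, and applying the AM--GM split $\sqrt{\log(1/\delta_3)}\le\tfrac12\big(\tfrac{\log(1/\delta_3)}{\sqrt{k\log(p/k)}}+\sqrt{k\log(p/k)}\big)$ to the $\sqrt B$ term turns it into the $C_2\sigma\log(1/\delta_3)/\sqrt{nk\log(p/k)}$ term plus a piece that merges into $C_1$.

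The only real obstacle is bookkeeping: one must track the factor $4$ from the head weights, the factor $\sqrt2$ from $\lambda_{\varepsilon}=(1+\varepsilon)\sigma\sqrt{2\log(p/k)/n}$, the $\delta_2$-dependence emerging from $G(u)$, and the two-sided cancellation of the $\|u_{S^c}\|_1$ terms carefully enough to land \emph{exactly} the stated $c_0$, $C_1(\delta_0,\delta_2,\epsilon)$ and $C_2(\delta_0,\delta_2,\epsilon)$, and to verify that replacing $\log(2ep/k)$ by $\log(p/k)$ is harmless in the regime $p/k\to\infty$ (note Lemma~\ref{lem::bound-on-stochastic-error} already demands $p/k\ge C_{\delta_1}$), which is precisely the regime in which Proposition~\ref{prop::lasso-MSE-bound} applies this lemma.
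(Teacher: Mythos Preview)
Your overall plan---basic inequality, bound the stochastic term via event \eqref{eq::stochastic-error-event}, split $H(u)$ into head and tail, absorb the tail into the left-hand $\lambda_\varepsilon\|u_{S^c}\|_1$, and invoke $SRE$---matches the paper's route in the case where $H(u)$ dominates. The gap is in how you handle the $G(u)$ alternative. You assert that ``$G(u)>H(u)$ controls $\|u_S\|_1$ and $\|u_{S^c}\|_1$'' so that $u\in\mathcal C_{SRE}(k,c_0)$ holds in that case too, allowing you to use $SRE$ uniformly in stage~(ii). But $G(u)$ is a multiple of $\|Xu\|_2$, so $G(u)>H(u)$ only bounds the $\ell_1$ pieces of $u$ in terms of $\|Xu\|_2$, not in terms of $\|u\|_2$; turning that into a cone bound $\|u\|_1\le(1+c_0)\sqrt k\,\|u\|_2$ would require relating $\|Xu\|_2$ back to $\|u\|_2$, which is circular. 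Likewise, your Young step on $G(u)$ leaves an additive $B\propto\sigma^2\log(1/\delta_3)/n$ on the right; after dropping $\|Xu\|_2^2$ you obtain $\lambda_\varepsilon\|u_{S^c}\|_1\le\lambda_\varepsilon\sqrt k\,\|u\|_2+B$, which is \emph{not} the cone inequality unless $\|u\|_2$ is already known to dominate $B/(\lambda_\varepsilon\sqrt k)$.

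The paper sidesteps this by \emph{not} seeking the cone condition when $G$ dominates. It first upper-bounds $H$ by the cleaner $\tilde H(u)=\sigma(1+\delta_2)\big(8\|u\|_2\sqrt{k\log(p/k)/n}+(1+\delta_1)\sqrt{2\log(p/k)/n}\,\|u_{S^c}\|_1\big)$ and then splits into two cases. If $G(u)>\tilde H(u)$, comparing $G$ to the \emph{head} term of $\tilde H$ directly yields $\|u\|_2\le\frac{\sqrt{2\log(1/\delta_3)}}{8(1+\delta_0)\delta_2\sqrt{nk\log(p/k)}}\,\|Xu\|_2$, with no $SRE$ needed; feeding this into the basic inequality produces a linear inequality in $\|Xu\|_2$ alone, and back-substitution gives the $C_2$ term. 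Only in the case $G(u)\le\tilde H(u)$ does the paper derive $\|u_{S^c}\|_1\le c_0\sqrt k\,\|u\|_2$, apply $SRE$, and obtain the $C_1$ term. The final bound is simply the sum of the two case bounds---no quadratic or AM--GM is needed.

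Two smaller points. First, you use the objective-value comparison giving $\tfrac{1}{2n}\|Xu\|_2^2$ on the left, whereas the paper uses the sharper two-point (first-order optimality) inequality with $\tfrac{1}{n}\|Xu\|_2^2$; your version would double $C_1$. Second, you treat $\sqrt{\log(2ep/k)/\log(p/k)}$ as an asymptotic $1+o(1)$, but the paper uses the exact bound $\log(2ep/k)\le 4\log(p/k)$ for all $p\ge 2k$; this is why the head coefficient in $\tilde H$ is $8$ and why the numerator of $c_0$ comes out as $4\sqrt 2(1+\delta_0)(1+\delta_2)+1+\varepsilon$ with no slack, so that the lemma holds non-asymptotically under $p\ge 2k$.
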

\begin{proof}[Proof of Lemma \ref{lem::lasso-l2-bound}]
We start with the basic inequality from Part (i) of Lemma \ref{useful:basic:eq},
    \begin{align}
       \frac{1}{n}\|X(\betaL-\beta)\|_2^2\leq \frac{1}{n}z^TX(\betaL-\beta)-\lambda_{\varepsilon}\|\betaL_{S^c}-\beta_{S^c}\|_1-\lambda_{\varepsilon}(\betaL_S-\beta_S)^T{\rm sign}(\beta_S). \label{eq::triangle-star}
    \end{align}
Denoting $u=\betaL-\beta$ and using the basic result $(\betaL_S-\beta_S)^T{\rm sign}(\beta_S)\geq -\|\betaL_S-\beta_S\|_1$, we can continue from \eqref{eq::triangle-star} to obtain that on the event \eqref{eq::stochastic-error-event},  
\begin{align*}
\frac{1}{n}\|Xu\|_2^2 \leq (1+\delta_0)\max(H(u),G(u))+\lambda_{\varepsilon}(\|u_{S}\|_{1} - \|u_{S^{c}}\|_{1}).
\end{align*}
To have an upper bound for $H(u)$, define
    \begin{equation}
    \label{tH:def}
        \Tilde{H}(u) := \sigma(1+\delta_2) \bigg(8 \|u\|_{2} \sqrt{\frac{k\log(p/k)}{n}} + (1+\delta_1) \sqrt{\frac{2\log(p/k)}{n}}\|u_{S^c}\|_1\bigg).
    \end{equation}
    Using the Cauchy-Schwarz inequality, 
    \begin{align*}
    H(u)&\leq \sigma(1+\delta_2)\bigg(4 \|u\|_{2} \sqrt{\sum_{j=1}^{k}\frac{\log(2p/j)}{n}} + (1+\delta_1) \sqrt{\frac{2\log(p/k)}{n}}\sum_{j=k+1}^{p} |u|_{(j)} \bigg) \\
    & \leq \sigma(1+\delta_2)\bigg(4 \|u\|_{2} \sqrt{\frac{k\log(2ep/k)}{n}} + (1+\delta_1) \sqrt{\frac{2\log(p/k)}{n}}\sum_{j=k+1}^{p} |u|_{(j)} \bigg)\\
    & \leq \Tilde{H}(u),
    \end{align*}
    where the second inequality holds because $\sum_{j=1}^{k}\log(2p/j) = k\log(2p) - \log(k!) \leq k\log(2ep/k)$ by Stirling's formula, and the third one is due to the fact that $\log(2ep/k)\leq 4\log(p/k)$ (as $p\geq 2k$) and $\sum_{j=k+1}^{p} |u|_{(j)}\leq \|u_{S^c}\|_1$. Hence, we can further proceed to have that on the event \eqref{eq::stochastic-error-event},
    \begin{align}
    \label{base:inq}
    \frac{1}{n}\|Xu\|_2^2 \leq (1+\delta_0)\max(\tilde{H}(u),G(u))+\lambda_{\varepsilon}(\|u_{S}\|_{1} - \|u_{S^{c}}\|_{1}).
    \end{align}

    Based on \eqref{base:inq}, we will derive the bound for $\|u\|_2$ in two different cases:
    \begin{itemize}
        \item If $G(u)>\tilde{H}(u)$, then by comparing the two expressions given in \eqref{eq::H-G-function2} and \eqref{tH:def} we have
        \begin{equation}
            \|u\|_{2} \leq \frac{\sqrt{2\log(1/\delta_3)}}{8(1+\delta_0)\delta_2\sqrt{nk\log(p/k)}}\|Xu\|_2.\label{eq::u2-bounded-by-xu2}
        \end{equation}
        This together with \eqref{base:inq} gives us
        \begin{align*}
            \frac{1}{n}\|Xu\|^2_2 &\leq (1+\delta_0)G(u)+\sqrt{k}\lambda_{\varepsilon}\|u\|_2 \\
            &\leq \sigma(1+\delta_2)\frac{\sqrt{2\log(1/\delta_3)}}{n\delta_2}\|Xu\|_2+\sigma(1+\varepsilon)\sqrt{\frac{2k\log(p/k)}{n}}\|u\|_2 \\
           & \leq \Big(1+\delta_2+\frac{1+\varepsilon}{4\sqrt{2}(1+\delta_0)}\Big)\cdot \frac{\sigma\sqrt{2\log(1/\delta_3)}}{n\delta_2}\|Xu\|_2.
        \end{align*}
        Solving the above for $\|Xu\|_2$ and then plugging it into \eqref{eq::u2-bounded-by-xu2}, we obtain
        \begin{align}
        \label{sharp:bound:first}
        \|u\|_2\leq \textcolor{black}{C(\delta_0,\delta_2,\varepsilon)}\frac{\sigma\log(1/\delta_3)}{\sqrt{nk\log(p/k)}}.
        \end{align}

        \item If $G(u)\leq \tilde{H}(u)$, using $\|u_S\|_1\leq \sqrt{k}\|u\|_2$ in \eqref{base:inq} yields
        \begin{align}
          0\leq \frac{1}{n}\|Xu\|_2^2 \leq &~\Big(8(1+\delta_0)(1+\delta_2)+\sqrt{2}(1+\varepsilon)\Big)\sigma\sqrt{\frac{k\log(p/k)}{n}}\|u\|_2 \nonumber \\
          &\hspace{-0.4cm}-\Big((1+\varepsilon)-(1+\delta_0)(1+\delta_1)(1+\delta_2)\Big)\sigma\sqrt{\frac{2\log(p/k)}{n}}\|u_{S^c}\|_1, \label{basic:eq:case2}
        \end{align}
        which implies $\|u_{S^c}\|_1 \leq c_0\sqrt{k}\|u\|_2$ with $c_0$ defined in \eqref{c0:exact:form}. This further shows that $\|u\|_1=\|u_S\|_1+\|u_{S^c}\|_1\leq (1+c_0)\sqrt{k}\|u\|_2$. Therefore, applying $\theta(k,c_0)$ in \eqref{sre:condef} and the condition $X\in \mathcal{A}(\delta_0,c_0,k)$, we obtain 
        \[
        (1-\delta_0)^2\|u\|^2_2\leq \frac{1}{n}\|Xu\|_2^2.
        \]
        \textcolor{black}{
       Combining this result with Part (ii) of Lemma \ref{useful:basic:eq} gives us
       \begin{align*}
       (1-\delta_0)^2\|u\|_2^2\leq \|u\|_2\cdot \sqrt{\sum_{j\in S}(g_j-\lambda_{\varepsilon} {\rm sign}(\beta_j))^2+\sum_{j\in S^c}(|g_j|-\lambda_{\varepsilon})_+^2}
       \end{align*}
       leading to the bound
       \begin{align}
       \label{sharp:bound:second}
      \|u\|_2 \leq \frac{1}{(1-\delta_0)^2}\sqrt{\sum_{j\in S}(g_j-\lambda_{\varepsilon} {\rm sign}(\beta_j))^2+\sum_{j\in S^c}(|g_j|-\lambda_{\varepsilon})_+^2}.
       \end{align}
       }
       \end{itemize}

Putting together \eqref{sharp:bound:first} and \eqref{sharp:bound:second} completes the proof.

\end{proof}
}

According to Lemmas \ref{lem::bound-on-stochastic-error} and \ref{lem::lasso-l2-bound}, we can conclude: for any given constants $\delta_0,\delta_1,\delta_2,\delta_3\in (0,1)$ satisfying $(1+\varepsilon)-(1+\delta_0)(1+\delta_1)(1+\delta_2)>0$ and any fixed design $X \in \mathcal{A}:=\mathcal{A}(\delta_0, c_0, k)$ with $c_0$ in \eqref{c0:exact:form}, the Lasso estimator $\betaL$ in \eqref{eq::lasso-estimator0} with tuning parameter $\lambda_{\varepsilon}$ satisfies 
\begin{align*}
\|\betaL-\beta\|_2^2\leq &~ \underbrace{\frac{1}{(1-\delta_0)^4}\Bigg(\sum_{j\in S}(g_j-\lambda_{\varepsilon} {\rm sign}(\beta_j))^2+\sum_{j\in S^c}(|g_j|-\lambda_{\varepsilon})_+^2\Bigg)}_{\hat{\Delta}}\\
&+ \underbrace{C^2(\delta_0,\delta_2, \varepsilon) \frac{\sigma^2}{nk\log(p/k)}}_{b}(\log(1/\delta_3))^2,
\end{align*}
with probability at least $1-\delta_3$, provided $p/k>\max(2,C_{\delta_1})$. Such a result implies
\begin{align*}
\mathbb{P}\Big(\frac{1}{b}(\|\betaL-\beta\|_2^2-\hat{\Delta})\mathbbm{1}_{\mathcal{A}}>t\Big)\leq e^{-\sqrt{t}}, ~\forall t>0.
\end{align*}
We can thus continue to obtain
\begin{align}
\label{from:high:exp}
\mathbb{E}\frac{1}{b}(\|\betaL-\beta\|_2^2-\hat{\Delta})\mathbbm{1}_{\mathcal{A}}&\leq \int_0^{\infty}\mathbb{P}\Big(\frac{1}{b}(\|\betaL-\beta\|_2^2-\hat{\Delta})\mathbbm{1}_{\mathcal{A}}>t\Big)\nonumber \\
&\leq \int_0^{\infty}e^{-\sqrt{t}}\leq \bar{C},
\end{align}
where the first inequality is due to the integral identity $\mathbb{E}(W)=\int_0^{\infty}\mathbb{P}(W>t)dt-\int_{-\infty}^0\mathbb{P}(W<t)dt, \forall W\in \mathbb{R}$, and $\bar{C}$ is an absolute constant. We can write out \eqref{from:high:exp} more explicitly as
\begin{align}
\label{exp:res:end}
&~\mathbb{E}\|\betaL-\beta\|_2^2\mathbbm{1}_{\mathcal{A}}\nonumber \\
\leq &~\frac{1}{(1-\delta_0)^4}\Bigg(\sum_{j\in S}\mathbb{E}(g_j-\lambda_{\varepsilon} {\rm sign}(\beta_j))^2\mathbbm{1}_{\mathcal{A}}+\sum_{j\in S^c}\mathbb{E}(|g_j|-\lambda_{\varepsilon})_+^2\mathbbm{1}_{\mathcal{A}}\Bigg)\nonumber \\
&~+\bar{C}C^2(\delta_0,\delta_2, \varepsilon) \frac{\sigma^2}{nk\log(p/k)}.
\end{align}
Since $g_j|X\sim \mathcal{N}(0,\sigma^2n^{-2}\|X_j\|^2_2)$, it holds that 
\[
\mathbb{E}(g_j-\lambda_{\varepsilon} {\rm sign}(\beta_j))^2\mathbbm{1}_{\mathcal{A}}=\mathbb{E}(g_j-\lambda_{\varepsilon})^2\mathbbm{1}_{\mathcal{A}}.
\]
Plugging the above into \eqref{exp:res:end} and take $\sup_{\beta\in \Theta_k}$ on both sides of the inequality leads to
\begin{align*}
&~\sup_{\beta\in \Theta_k}\mathbb{E}\|\betaL-\beta\|_2^2\mathbbm{1}_{\mathcal{A}} \\
\leq &~\frac{1}{(1-\delta_0)^4}\sup_{|S|\leq k}\Bigg\{\sum_{j\in S}\mathbb{E}(g_j-\lambda_{\varepsilon})^2\mathbbm{1}_{\mathcal{A}}+\sum_{j\in S^c}\mathbb{E}(|g_j|-\lambda_{\varepsilon})_+^2\mathbbm{1}_{\mathcal{A}}\Bigg\} \\
&~+\bar{C}C^2(\delta_0,\delta_2, \varepsilon) \frac{\sigma^2}{nk\log(p/k)}.
\end{align*}
We choose
\[
\delta_0=\delta_1=\delta_2=\Big(1+\frac{\varepsilon}{2}\Big)^{\frac{1}{3}}-1.
\]
As a result, $(1+\delta_0)(1+\delta_1)(1+\delta_2)=1+\frac{\varepsilon}{2}<1+\varepsilon$, and the constant $c_0$ of \eqref{c0:exact:form} becomes exactly the one specified in \eqref{cons:value} (so is $\delta_0$). The proof of Proposition \ref{prop:lasso:denoiser} is completed by setting $C_{\varepsilon}:=\bar{C}C^2(\delta_0,\delta_2, \varepsilon)$ and $c_{\varepsilon}:=\max(2,C_{\delta_1})$.

\printbibliography

\end{document}